\documentclass[11pt]{article}
\usepackage[a4paper,tmargin=2.25cm,bmargin=2.25cm, rmargin=2.cm,lmargin=2.cm]{geometry}
\usepackage{amssymb, amsmath, mathtools, fancyhdr,amsfonts,empheq}
\usepackage{graphicx}
\usepackage[table]{xcolor}
\usepackage{booktabs, multirow}
\usepackage{stmaryrd}
\usepackage{amsthm, enumitem}
\usepackage{float}
\usepackage{caption}
\usepackage{subcaption}
\usepackage{chngcntr}
\usepackage{apptools}
\usepackage{scalerel}
\usepackage{undertilde}
\usepackage{array}
\AtAppendix{\counterwithin{lemma}{section}}
\definecolor{ppurple}{rgb}{.5,0,1}
\definecolor{lightblue}{rgb}{0.22,0.45,0.70}
\definecolor{lightgreen}{rgb}{0.22,0.50,0.25}
\definecolor{mangolassi}{RGB}{204,85,0}
\usepackage[colorlinks=true,breaklinks=true,linkcolor=lightblue,citecolor=lightgreen,urlcolor=lightblue]{hyperref}

\newcolumntype{g}{ >{\columncolor{mygrey}} c }
\setlength{\tabcolsep}{4.5pt}
\setlength{\parskip}{4pt}
% \allowdisplaybreaks
\newcolumntype{C}[1]{>{\centering\arraybackslash}m{#1}}
%%  Theorem environments
\newtheorem{lemma}{Lemma}[section]
\newtheorem{theorem}[lemma]{Theorem}

\newtheorem{remark}[lemma]{Remark}
\newtheorem{corollary}[lemma]{Corollary}

\numberwithin{equation}{section}
\numberwithin{figure}{section}
\numberwithin{table}{section}
%%Macros
\input mydef.sty

%%%%%%%%%%%%%%%%%%%%%%%%%%%%%%%
\date{\today}
\title{Improved $L^2$-error estimates for the wave equation discretized using hybrid nonconforming methods on simplicial meshes\thanks{This project has received funding from the European Union’s Horizon 2020 research and innovation programme under the Marie Skłodowska-Curie grant agreement No 101034255.}}
\author{
{\sc Bernardo Cockburn}\thanks{School of Mathematics, University of Minnesota, USA.
Email:{\tt bcockbur@umn.edu}}
\qquad
{\sc Alexandre Ern}\thanks{CERMICS, ENPC, Institut Polytechnique de Paris, F-77455 Marne-la-Vallée cedex 2,  and SERENA Project-Team, Centre Inria de Paris, F-75647 Paris, France. Email: {\tt alexandre.ern@enpc.fr}}
\qquad 
{\sc Rekha Khot}\thanks{SERENA Project-Team, Centre Inria de Paris, F-75647 Paris, and CERMICS, ENPC, Institut Polytechnique de Paris, F-77455 Marne-la-Vallée cedex 2,   France. Email: {\tt rekha.khot@inria.fr}}
}

\begin{document}
\maketitle
\begin{abstract}
We present improved $L^2$-error estimates on the time-integrated primal variable for the wave equation in its first-order formulation. The space discretization relies on a hybrid nonconforming method, such as the hybridizable discontinuous Galerkin, the hybrid high-order or the weak Galerkin methods. We consider both equal-order and mixed-order settings on simplices, and include the lowest-order case with piecewise constant unknowns on the faces and in the cells. Our main result is a superclose, resp., optimal bound on the above error in the equal-, resp., mixed-order case. A key result of independent interest to achieve these estimates are novel approximation estimates for an interpolation operator inspired from the hybridizable discontinuous Galerkin literature.
\end{abstract}

\medskip
\noindent\textbf{Mathematics Subject Classification:} 35L05, 65M15, 65M60. 

\medskip
\noindent\textbf{Keywords:} Wave equation, hybrid high-order, hybridizable discontinuous Galerkin, weak Galerkin, interpolation operator, superconvergence.

\section{Introduction}

The wave equation is a classical example of hyperbolic partial differential equation. 
We focus here on the first-order formulation in space and time, with a skew-symmetric differential operator in space, having the structure of a time-dependent Friedrichs's system. The first-order formulation is attractive since it makes the application of high-order time discretization schemes such as Runge--Kutta (RK) schemes rather straightforward. It is also more natural than the second-order formulation in time in the context of more complex models derived from conservation laws. The space discretization of Friedrichs's systems is usually done using discontinuous Galerkin (dG) methods or stabilized $H^1$-conforming finite elements (see, e.g., \cite{JohPi:86,ErnGu:06} and \cite[Chap.~58--60]{ErnGuermondIII} for an overview). In the time-dependent case, the combination of RK and dG methods has become a popular paradigm, in the wake of \cite{cockburn1989tvb}. See also \cite[Chapter~77-78]{ErnGuermondIII} for further insight.

Hybrid nonconforming discretization schemes in space are based on approximating the unknowns inside the mesh cells as well as their trace on the mesh faces. The main advantage of such methods with respect to dG is their reduced computational cost owing to static condensation, which eliminates locally all the cell degrees of freedom. The most popular hybrid nonconforming method is the hybridized discontinuous Galerkin (HDG) method introduced in \cite{cockburn2009}. A different devising viewpoint was developed some years later, leading to the hybrid high-order (HHO) and weak Galerkin (WG) methods. Both methods were developed independently, the former was coined in \cite{DE_2015, DEL_2014} and the latter in \cite{Wang2013}. In a nutshell, HDG methods approximate a triple composed of the solution, its gradient and its traces on the mesh skeleton and the key devising concept is the numerical flux trace, whereas HHO and WG approximate a pair composed of the solution and its traces on the mesh skeleton and the key devising concepts are local gradient reconstruction and local stabilization operators (the gradient reconstruction operator being called weak gradient in WG). We refer the reader to \cite{CDE_2016,cicuttin2021hybrid} and \cite[Subsection 6.6]{CockburnDurham16} for more details on the connections between HDG/HHO/WG methods. 
Hybrid nonconforming methods have already been employed to approximate the wave equation in its first-order form. The HDG  method has  been studied in \cite{cockburn2014uniform,stanglmeier2016explicit,cockburn2018stormer} (see also formulations exploiting the Hamiltonian structure of the equations to combine HDG space discretizations with symplectic time-marching methods \cite{SanchezCiucaNguyenPeraireCockburn17,SanchezCockburnNguyenPeraire21,SanchezDuCockburnNguyenPeraire2022,cockburnDuSanchez22}), the HHO method in \cite{BDE_2022, BDES_2021} (see also \cite{ern2024explicit}),
and the WG method in \cite{zhang2022weak,zhang2022explicit}.

The main contribution of the paper is a superconvergent $L^2$-error estimate on the time-integrated primal variable. The presentation uses the HHO/WG formalism based on a pair of discrete unknowns, but
we briefly indicate the connection to the HDG formalism based on a triple of discrete unknowns. 
Our analysis covers both equal-order and mixed-order cases, whereby the polynomial degree of the cell unknowns is the same or one order higher, respectively, than the degree of the face unknowns, 
$k\ge0$. In both cases, the polynomial degree of the gradient reconstruction is the same as the degree of the face unknowns. The stabilization is scaled by the reciprocal of the mesh size, and employs the high-order HHO correction in the equal-order case \cite{DEL_2014}, whereas it coincides with the Lehrenfeld--Sch\"oberl HDG stabilization in the mixed-order case \cite{lehsc:16}. In all cases, provided the exact solution has optimal smoothness, we prove that the decay rate on the $L^2$-error on the time-integrated primal variable is $\mathcal{O}(h^{k+1+s})$, where $h$ is the mesh size and $s\in(\frac12,1]$ is the index of elliptic regularity pickup. 

To appreciate the improvement brought by our analysis, we summarize in Table~\ref{tab} the decay rates available on the error. The energy error, that is, the $L^2$-error on both primal and dual variables, decays at rate $\mathcal{O}(h^{k+1})$ \cite{BDES_2021}. For the dual variable, this rate is indeed optimal, whereas, for the primal variable, this rate is optimal in the equal-order case and suboptimal in the mixed-order case. Here, we improve the estimate on the primal variable by considering its time integration. Indeed, we show that for this time-integrated variable, the decay rate is $\mathcal{O}(h^{k+1+s})$, which means a supercloseness estimate in the equal-order case and an optimal estimate in the mixed-order case.
The idea of considering a time-integrated variable comes from
\cite{cockburn2014uniform}, where an improved $L^2$-error estimate is shown using equal-order HDG and plain Least-Squares stabilization. The main differences with the present work are that \cite{cockburn2014uniform} assumes full elliptic regularity and $k\geq 1$, and requires a local post-processing using the discrete dual variable. Here, we consider more general stabilization strategies, any elliptic regularity pickup $s\in(\frac12,1]$, include the case $k=0$, and do not require using the discrete dual variable.

\begin{table}[H]
	\begin{center}
		\begin{tabular}{|C{2cm}|C{2cm}|C{2cm}|C{2cm}|C{2cm}|C{2.5cm}|}		
			\hline
		&
			\multicolumn{2}{c|}{\multirow{2}{*}{Cell polynomial degree}}	 &\multicolumn{3}{c|}{Convergence rate} \\
			\cline{4-6} \multirow{2}{*}{Setting}\rule{0pt}{1em} 	& \multicolumn{2}{c|}{}  & \multicolumn{2}{c|}{Energy error} & $L^2$-error\\
			\cline{2-6}  \rule{0pt}{1.5em} & Primal ($v$) & Dual ($\bsg$) & Primal ($v$) & Dual ($\bsg$) & Primal ($\int_0^t v$)\\
			\hline
			\multirow{2}{*}{Equal-order} & \multirow{2}{*}{$k$} & \multirow{2}{*}{$k$} & \multirow{2}{*}{$\boldsymbol{k+1}$} & \multirow{2}{*}{$k+1$} & \multirow{2}{*}{$\boldsymbol{k+1+s}$} \\
			& & & & & \\
			\hline
			\multirow{2}{*}{Mixed-order} & \multirow{2}{*}{$k+1$} & \multirow{2}{*}{$k$} & \multirow{2}{*}{$\boldsymbol{k+1}$} & \multirow{2}{*}{$k+1$} & \multirow{2}{*}{$\boldsymbol{k+1+s}$} \\
			& & & & & \\
			\hline 
		\end{tabular}
		\caption{Convergence rates of energy and $L^2$-errors in  equal- and mixed-order settings given a polynomial degree $k\geq 0$. In all cases, the face polynomial degree for the primal variable is $k$.}
		\label{tab}
	\end{center}
\end{table}
Our convergence analysis hinges on an auxiliary result of independent and broader interest, namely an interpolation operator with supercloseness (equal-order) or optimal (mixed-order) approximation properties.  
This operator in the mixed-order case is similar to the so-called HDG+-interpolation operator \cite{du2019invitation} (see also the HDG-interpolation operator from \cite{cockburn2010projection}). In the equal-order case, this operator is, to our knowledge, novel. Moreover, some of the arguments in the proof of its approximation estimates are slightly simpler than those in \cite{cockburn2010projection}. Another relevant improvement in the analysis is that we relax the regularity requirement on the divergence of the dual variable, which is important if one wants to address the case of partial elliptic regularity pickup (less than 1). We call our novel operator the H-interpolation operator, and hope that it will find further applications in the analysis of HDG/HHO/WG methods. In the present setting, the main role of the H-interpolation operator lies in the handling of the initial conditions on the time derivatives. 

The paper is organized as follows: Section~\ref{sec:model-disc} introduces the model problem and the HDG/HHO/WG discretization. Section~\ref{sec:main} collects our main results on the H-interpolation operator and on the error estimates for the wave equation. Section~\ref{sec:aux} is devoted to all preliminary results needed in Section~\ref{sec:H-Int} to establish the well-posedness of the H-interpolation operator and the  approximation estimates. Finally, Section~\ref{sec:errest} contains the error analysis, first in the energy norm and then in the $L^2$-norm for the time-integrated primal variable.
For illustrative numerical experiments related to our theoretical results, we refer the reader to \cite{BDE_2022, mottier2025hybrid}.

\section{Model problem and space semi-discretization}
\label{sec:model-disc}

This section presents the model problem and its space semi-discretization using hybrid nonconforming methods.

\subsection{Model problem}\label{sec:model}
Let $\Omega$ be a polyhedral Lipschitz domain (open bounded connected set)  in $\mathbb{R}^d$ for $d\in\{2,3\}$ with boundary $\Gamma$. The first-order formulation of the acoustic wave equation defined on the space domain $\Omega$ and the time domain $J:=(0,T_f)$ for the final time $T_f>0$ consists of the coupled partial differential equations in $J\times\Omega$,
\begin{subequations}\label{model}
\begin{align}
\pt\bsg - \nabla v &= \bzero,\label{eq:model.1a}\\
\pt v-\nabla{{\cdot}}\bsg &= f,\label{eq:model.1b}
\end{align}
\end{subequations}
involving as unknowns the dual variable $\bsg:J\times\Omega\to\rbb^d$ and the primal variable $v:J\times\Omega\to\rbb$, and the known source term $f:J\times\Omega\to\rbb$. The initial conditions are
\begin{align*}
\bsg(0) = \bsg_0,\quad v(0) = v_0 \quad\text{in}\;\Omega,
\end{align*}
with given data $\bsg_0:\Omega\to\rbb^d$ and $v_0:\Omega\to\rbb$,	and the boundary condition is (for simplicity) 
\begin{align*}
v = 0\quad\text{on}\;J\times\Gamma.
\end{align*}
We follow the standard notation for Sobolev/Lebesgue spaces  and respective norms. The $L^2$-inner product and the associated norm on a domain $S$ are denoted by $(\cdot,\cdot)_S$ and $\|\cdot\|_S$ respectively. We use a bold symbol for vector variables and spaces composed of vector-valued fields, e.g., $\bL^2(\Omega) := [L^2(\Omega)]^d$.

We set $\ul{V}:=\bH(
\dv,\Omega)\times H^1_0(\Omega)\subset \ul{L}:=\bL^2(\Omega)\times L^2(\Omega)$. Here and in what follows, we underline symbols that refer to a pair composed of one dual variable and one primal variable. Assuming an initial condition $(\bsg_0,v_0)\in \ul{V}$, the Hille--Yosida theorem gives a solution $(\bsg,v)\in C^0(\ol{J};\ul{V})\cap C^1(\ol{J};\ul{L})$. In particular, we have, for all $t\in \ol{J}$, 
\begin{subequations}\label{weak-form}
\begin{alignat}{2}
(\pt\bsg(t),\bxi)_{\Omega} - (\nabla v(t), \bxi)_{\Omega}&=0,\qquad&\forall& \bxi\in \bL^2(\Omega), \\
(\pt v(t),w)_{\Omega} + (\bsg(t), \nabla w)_{\Omega} &= (f(t),w)_{\Omega}\qquad&\forall& w\in H^1_0(\Omega).
\end{alignat}
\end{subequations}
Below, we use this weak formulation to define our space semi-discretization schemes.

\subsection{Space semi-discretization}\label{sec:space_semidisc}	

This section presents the HHO/WG space semi-discretization of the model problem and its equivalent  HDG rewriting.

\subsubsection{Mesh and polynomial spaces}
Let $\cT$ be a simplicial mesh covering exactly the domain $\Omega$. The set  $\cF$ contains the mesh faces, and is divided into the set of mesh interfaces $\cF^\circ$ and the set of mesh boundary faces $\cF^\partial$. A generic mesh cell is denoted by $T\in\cT$ with diameter $h_T$, unit outward normal $\bn_T$ on $\dT$, and the set $\cF_{\dT}$ collects the mesh faces located on the boundary of $T$.

Let $k\ge0$ be the polynomial degree and let $k'\in\{k,k+1\}$.  We set 
\begin{equation*}
\bS_\cT^k:=\bigtimes_{T\in\cT}\bS_T^k,
\end{equation*}
with $\bS_T^k:=\pbb^k(T;\rbb^d)$, as well as 
\begin{equation*}
\h{V}_\cM^{k}:=V_\cT^{k'}\times V_\cF^k,\qquad V_\cT^{k'}:=\bigtimes_{T\in\cT}V_T^{k'},\qquad V_\cF^{k}:=\bigtimes_{F\in\cF}V_F^{k},
\end{equation*}
where $V_T^{k'}:=\pbb^{k'}(T;\rbb)$ (resp. $V_F^{k}:=\pbb^k(F;\rbb)$) is composed of the scalar-valued $d$-variate (resp. $(d-1)$-variate) polynomials of degree at most $k'$ (resp. $k$) restricted to the cell $T$ (resp. face $F$).
Here, we use the subscript $\cM:=(\cT,\cF)$ to indicate the joint collection of mesh cells and faces. For a generic $\bxi_\cT\in \bS_\cT^k$ and a generic $\h{w}_\cM \in \h{V}_\cM^{k}$, we write
\begin{equation*}
\bxi_\cT := (\bxi_T)_{T\in\cT}, \qquad
\h{w}_\cM := (w_\cT,w_\cF) := \big( (w_T)_{T\in\cT}, (w_F)_{F\in\cF} \big),
\qquad
w_{\dT} := (w_F)_{F\in \cF_{\dT}}.
\end{equation*}
In what follows, we use a hat symbol, as in $\h{v}$, to refer to a pair composed of a cell unknown $v_T$, and a face unknown, $v_{\partial T}$. To impose the zero Dirichlet boundary condition on the primal variable, we define $\h{V}_{\cM0}^{k}:=V_\cT^{k'}\times V_{\cF0}^k$ with $V_{\cF0}^k:=\{v_\cF\in V_\cF^k: v_F =0 \quad\forall F\in \cF^\partial\}$. Let $\Pi^{k'}_T$ (resp. $\Pi^k_{F}$) be the $L^2(T)$-orthogonal (resp. $L^2(F)$-orthogonal) projection onto $V_T^{k'}$ (resp. $V^k_{F}$). Let $\Pi^{k'}_\cT$ (resp. $\Pi^k_{\cF}$) be the piecewise $L^2$-orthogonal projection onto $V_\cT^{k'}$ (resp. $V^k_{\cF}$). Recall the following approximation estimate: For all $T\in\cT$ and all $v\in H^{m}(T)$ with $m\in\{0{:}k'\}$, 
\begin{align}
\norm{v-\Pi^{k'}_T(v)}_{H^r(T)} \lesssim h_T^{m-r}|v|_{H^m(T)}\qquad\forall r\in\{0{:}m\}.\label{L2:proj-est}
\end{align}
Similar definitions and estimates hold for vector-valued functions. We also define the elliptic projection ${\cal{E}}_T^{k+1}:H^1(T)\to \pbb^{k+1}(T;\rbb)$ as 
\begin{subequations}
	\begin{align}
		(\nabla({\cal{E}}_T^{k+1}(v)),\nabla q)_T &= (\nabla v,\nabla q)_T\qquad\forall q\in \pbb^{k+1}_*(T;\rbb):=\pbb^{k+1}(T;\rbb)/\rbb,\\
		({\cal{E}}_T^{k+1}(v),1)_T &= (v,1)_T,
	\end{align}
\end{subequations} 
which satisfies the following approximation estimate:
\begin{align}
\norm{\nabla(v-{\cal{E}}_T^{k+1}(v))}_{T} \lesssim h_T^{k+1}|v|_{H^{k+1}(T)}.\label{est:elliptic}
\end{align}

\subsubsection{HHO/WG space semi-discretization}
We approximate  the primal variable $v$ with an HHO/WG method using cell polynomials of degree $k'$ and face polynomials of degree $k$. The setting is said to be of equal-order if $k'=k$ and of mixed-order if $k'=k+1$. On the other hand, we approximate the dual variable $\bsg$ with a dG method using cell polynomials of degree $k$. The stabilization is the one introduced in HHO methods for the equal-order case and in HDG methods for the mixed-order case, whereas WG methods in the equal-order case generally employ a plain Least-Squares stabilization (see Remark~\ref{rem:stab}).

The space semi-discretized wave equation reads as follows: Find $\bsg_\cT:\ol{J}\rightarrow \bS_\cT^k$ and $\h{v}_\cM:\ol{J}\rightarrow \h{V}_{\cM0}^{k}$ such that, for all $t\in \ol J$,  all $\bxi_\cT\in\bS_\cT^k$ and all $\h{w}_\cM\in \h{V}_{\cM0}^{k}$,
\begin{subequations} \label{eq:HHO} 
\begin{align}
(\pt \bsg_\cT(t),\bxi_\cT)_{\Omega} - (\bG_\cT(\h{v}_\cM(t)),\bxi_\cT)_\Omega &= 0, 
\label{eq:HHO_flux} \\
(\pt v_\cT(t),w_\cT)_{\Omega} + (\bsg_\cT(t),\bG_\cT(\h{w}_\cM))_\Omega 
+ s_\cM(\h{v}_\cM(t),\h{w}_\cM) &= (f,w_\cT)_\Omega,
\label{eq:HHO_primal}
\end{align} 
\end{subequations}
where the gradient reconstruction operator $\bG_\cT : \h{V}_\cM^{k} \rightarrow \bS_T^k$ is such that, for all $\h{v}_\cM \in \h{V}_\cM^{k}$ and all $\bxi_\cT \in \bS_T^k$,
\begin{align}
(\bG_\cT(\h{v}_\cM),\bxi_\cT)_\Omega := {}& \sum_{T\in\cT} \big\{ 
- (v_T,\dv \bxi_T)_T + (v_{\dT},\bxi\SCAL\bn_T)_{\dT} \big\} \nonumber \\
= {}& \sum_{T\in\cT} \big\{ 
(\nabla v_T,\bxi_T)_T - (v_T-v_{\dT},\bxi\SCAL\bn_T)_{\dT} \big\}.\label{def:G}
\end{align}
The stabilization bilinear form $s_\cM:\h{V}_{\cM}^{k}\times \h{V}_{\cM}^{k} \rightarrow \rbb$ is such that
\begin{equation}
\label{def:tau}
s_{\cM}(\h{v}_\cM,\h{w}_\cM) := \sum_{T\in\cT} \tau_{T} \big(S_{\dT}^{\sPi}(v_T,v_{\dT}),S_{\dT}^{\sPi}(w_T,w_{\dT})\big)_{\dT},\quad \text{with}\quad \tau_T := \ell_\Omega h_T^{-1},
\end{equation}
where the scaling factor $\ell_\Omega:=\text{diam}(\Omega)$ is introduced to make
the weight $\tau_T$ non-dimensional,
and the operator $S_{\dT}^{\sPi}$ is defined as
\begin{subequations}\label{HHO:stab}
\begin{empheq}[left={S_{\dt}^{\HHO}(\h{v}_T):=\empheqlbrace}]{alignat = 2}
& S_{\dt}^{\rm{eo}}(\h{v}_T):=\Pi^k_{\dt}
\{\delta_{\dt}(\h{v}_T)+((1-\Pi^k_T)R_T(\h{v}_T))|_{\dt}\}\qquad&\text{for}\; &k'=k,\label{HHO-stab:1a}\\
&S_{\dt}^{\rm{mo}}(\h{v}_T):=\Pi^k_{\dt}\{\delta_{\dt}(\h{v}_T)\}&\text{for}\;& k'=k+1,\label{HHO-stab:1b}
\end{empheq}
\end{subequations}
with the boundary difference operator 
\begin{align}
\delta_{\dt}(\h{v}_T):= v_T|_{\dt}-v_{\dt}.\label{diff-op}
\end{align}
Here, $\Pi^k_{\dT}$ is the $L^2$-orthogonal projection onto the space $\pbb^k(\cF_{\dT};\rbb) :=\bigtimes_{F\in\cF_T}V_F^{k}$ so that  $S_{\dT}^{\sPi}(\h{v}_T) \in \pbb^k(\cF_{\dT};\rbb)$ in both cases.  Moreover, the local potential reconstruction operator $R_T:\h{V}_T^k\to \pbb^{k+1}(T;\rbb)$ is defined, for all $\h{v}_T\in\h{V}_T^k$, such that
\begin{subequations}\label{def:reconst}
\begin{align}
(\nabla R_T(\h{v}_T),\nabla w)_{T} &= (\nabla v_T, \nabla w)_{T}-(v_T-v_{\dt},\nabla w{\cdot}\bn_T)_{\dt}\qquad\forall w\in\pbb^{k+1}_{\mbox{*}}(T;\rbb),\label{def:reconst.a}\\
(R_T(\h{v}_T)-v_T,1)_{T}&=0.\label{def:reconst.b}
\end{align}
\end{subequations} 
In \eqref{HHO-stab:1a}, we notice that the correction term involving the operator $R_T$ is added to the plain least-squares stabilization classically considered in the context of dG methods. 
Finally,  \eqref{eq:HHO} is initialized by prescribing
\begin{equation} \label{eq:IC}
\bsg_\cT(0) := \bPi^{\bsg}_\cT(\bsg_0,v_0),\qquad
v_\cT(0) := \Pi^v_\cT(\bsg_0,v_0),
\end{equation}
where the projections $(\bPi^{\bsg}_\cT,\Pi^v_\cT)$ are yet to be defined. 
\begin{remark}[Role of stabilization] \label{rem:stab}
Taking the stabilization operator equal to the difference operator $\delta_{\dt}$ corresponds to plain Least-Squares stabilization. Here, we consider more sophisticated choices that play a key role in achieving higher-order estimates. In particular, the stabilization operator enters the definition of the H-interpolation operator (see~\eqref{H:proj.c} below). This choice, in turn, is instrumental in achieving improved approximation estimates on the primal variable (see \eqref{est:pw} and Remark~\ref{rem:sup} below).
\end{remark}
\subsubsection{HDG rewriting}
The discrete problem \eqref{eq:HHO} can be recast in the HDG setting, see \cite[Section~4.3]{BDE_2022} and \cite{CDE_2016} for details. Here,  we recall the local discrete problem which is obtained by  taking test functions with support localized in the mesh cell $T\in\cT$, using the definition of $\bG_\cT$ in \eqref{eq:HHO_flux}-\eqref{eq:HHO_primal}, and  rewriting the stabilization term in \eqref{eq:HHO_primal}. Notice from the definition of the operator $R_T$ that 
\begin{align}
R_T(\h{v}_T) = R_T(v_T,v_T|_{\dt})-R_T(0,\delta_{\dt}(\h{v}_T))=v_T-R_T(0,\delta_{\dt}(\h{v}_T)).
\end{align} 
Hence, we can rewrite $S_{\dt}^{\rm{eo}}(\h{v}_T) = \Pi^k_{\dt}\{\delta_{\dt}(\h{v}_T)-(1-\Pi^k_T) R_T(0,\delta_{\dt}(\h{v}_T))|_{\dt}\}$. This together with the definition of $S_{\dt}^{\rm{mo}}$ shows that in both cases $S_{\dt}^{\HHO}$ only  acts on $\delta_{\dt}(\h{v}_T)$. Thus, we define the operator $\ti{S}_{\dt}^{\HHO}:\pbb^k(\cF_{\dt};\rbb)\to \pbb^k(\cF_{\dt};\rbb)$ by
\begin{subequations}\label{HHO:stab-re}
\begin{empheq}[left={\ti{S}_{\dt}^{\HHO}(\mu):=\empheqlbrace}]{alignat = 2}
& \ti{S}_{\dt}^{\rm{eo}}(\mu):=\Pi^k_{\dt}
\{\mu-((1-\Pi^k_T)R_T(0,\mu))|_{\dt}\}\qquad&\text{for}\; &k'=k,\label{HHO-stab-re:1a}\\
&\ti{S}_{\dt}^{\rm{mo}}(\mu):=\Pi^k_{\dt}\{\mu\}&\text{for}\;& k'=k+1.\label{HHO-stab-re:1b}
\end{empheq}
\end{subequations}
For all $T\in\cT$ and all $(\bxi_T,w_T)\in\bS_T^k\times V_T^{k'}$, we infer from \eqref{eq:HHO} that
\begin{subequations}
\begin{align}
(\pt \bsg_T(t),\bxi_T)_{T} +(v_T(t), \nabla{\cdot}\bxi_T)_T - (v_{\dt}(t),\bxi_T{\cdot}\bn_T)_{\dt} &= 0,\\
(\pt v_T(t),w_T)_{T}+(\bsg_T(t),\nabla w_T)_T -  (\h{\bsg}_{\dt}(t),w_T)_{\dt}&=(f(t),w_T)_T.
\end{align}
\end{subequations}
For all $F\in\cF^\circ$ with $F\in \partial T^-\cap \partial T^+$  and all $w_F\in V_{F}^k$, we infer from \eqref{eq:HHO_primal} that
\begin{align}\tag{3.13c}
(\h{\bsg}_{\partial T^-}(t)+\h{\bsg}_{\partial T^+}(t),w_{F})_{F}=0,
\end{align}
with the numerical flux trace
\begin{empheq}[left={\h{\bsg}_{\dt}(t):=\bsg_T(t){\cdot}\bn_T-\tau_T\lambda_{\dt}(\delta_{\dt}(\h{v}_T(t)))\quad\text{with}\quad \lambda_{\dt}(\cdot):=\empheqlbrace}]{alignat* = 2}
& ((\ti{S}_{\dt}^{\rm{eo}})^*\circ \ti{S}_{\dt}^{\rm{eo}})(\cdot)\qquad&\text{for}\; &k'=k,\\
&\Pi^k_{\dt}(\cdot)&\text{for}\;& k'=k+1,
\end{empheq}
where $(\ti{S}_{\dt}^{\rm{eo}})^*:\pbb^k(\cF_{\dt};\rbb)\to \pbb^k(\cF_{\dt};\rbb)$ denotes the adjoint operator of $\ti{S}_{\dt}^{\rm{eo}}$ with the respect to $L^2(\dt)$-inner product.

\begin{remark}[Initial face values]
We notice that the value $v_\cF(0)$ is not prescribed in \eqref{eq:IC}, but results from 
\eqref{eq:HHO_primal} by testing with an arbitrary function $\h{w}_\cM=(0,w_\cF)$. Let $(\bsg_\cT,\h{v}_{\cM})$ solve \eqref{eq:HHO}. A simple calculation shows that the following holds: For all $F\in \cF^\circ$, 
\begin{subequations}
\begin{empheq}[left={v_F(0)=\empheqlbrace}]{alignat = 2}
&\frac{1}{\avg{\tau_\cT}_F} \;\;\;\;\;\;\avg{\tau_\cT v_\cT(0)}_F\hskip.2truecm+\frac{1}{2\avg{\tau_\cT}_F} \lambda_{\dt}^{-1}(\jump{\bsg_\cT(0)}_{\dt}\SCAL\bn_T)|_F ,\qquad&\text{for}\; &k'=k,\label{3.12a}\\
& \frac{1}{\avg{\tau_\cT}_F} \Pi^k_F\big(\avg{\tau_\cT v_\cT(0)}_F\big)+\frac{1}{2\avg{\tau_\cT}_F} \;\;\;\;\;\;\;\jump{\bsg_\cT(0)}_F\SCAL\bn_F,&\text{for}\;& k'=k+1,
\end{empheq}
\end{subequations}
and $v_F(0)=0$ for all $F\in\cF^\partial$. Here, for all $F\in\cF^\circ$, letting $T^\pm$ be the two mesh cells sharing $F$, and defining $\bn_F$ as the unit normal to $F$ pointing from $T^-$ to $T^+$ (the orientation is arbitrary but fixed once and for all), the average and jump of a piecewise smooth function at $F$, say $\phi$, are defined as
\begin{equation*}
\avg{\phi}_F := \frac12 \big( \phi|_{T^-}|_F+\phi|_{T^+}|_F\big),
\qquad
\jump{\phi}_F := \phi|_{T^-}|_F - \phi|_{T^+}|_F,
\end{equation*}
with  $\jump{\phi}_{\dt}|_F:=\jump{\phi}_F$ for all $F\in\cF_{\dt}$.
Notice that $\lambda_{\dt}$ in the equal-order case \eqref{3.12a} 
can be viewed as a term enforcing a stabilization mechanism. Indeed, for all $\mu\in\pbb^k(\cF_{\dt};\rbb)$, we have 

\[(\mu,{\lambda_{\dt}(\mu)})_{\dt}=(\mu,{((\ti{S}_{\dt}^{\rm{eo}})^*\circ \ti{S}_{\dt}^{\rm{eo}})(\mu)})_{\dt} = \norm{\ti{S}_{\dt}^{\rm{eo}}(\mu)}^2_{\dt}=\norm{S_{\dt}^{\rm{eo}}(0,\mu)}^2_{\dt}\gtrsim \norm{\mu}^2_{\dt}, \]
where the lower bound follows from the stability estimate of Lemma~\ref{prop:stability} below with $\h{v}_T:=(0,\mu)$. In particular, the above bound shows that $\lambda_{\dt}^{-1}$ is well-defined.
\end{remark}

\subsection{The H-interpolation operator}\label{sec:H-interp}
In this section, we introduce our novel, key tool for the error analysis, namely, the H-interpolation operator. We consider  both equal- and mixed-order settings. 
We first provide a unified definition of the H-interpolation operator, and then discuss separately the equal- and mixed-order cases.

\noindent\textbf{Unified H-interpolation operator}. Let $T\in\cT$. For all $\ul{v}:=(\bsg,v)\in \bH^{\nu_{\bsg}}(T)\times H^{\nu_v}(T)$ with $\nu_{\bsg}>\frac{1}{2}$ and  $\nu_v>\frac{1}{2}$, the H-interpolation operator 
\begin{align*}
\ul{\Pi}^{\sH}_T(\ul{v}) :=(\bPi^{\bsg}_T(\ul{v}),\Pi^v_T(\ul{v})) \in \pbb^k(T;\rbb^d) \times \pbb^{k'}(T;\rbb)
\end{align*}
on any simplex $T\in\cT$ is defined by the following equations:
\begin{subequations} \label{H:proj} \begin{alignat}{2}
&(\Pi^v_T(\ul{v})-v,w)_T = 0 &\qquad &\forall w\in \pbb^{k-1}(T;\rbb),\label{H:proj.a}\\
&(\bPi^{\bsg}_T(\ul{v})-\bsg,\bxi)_T = 0 &\qquad &\forall \bxi\in \pbb^{k-1}(T;\rbb^d),\label{H:proj.b} \\
&((\bsg-\bPi^{\bsg}_T(\ul{v}))\SCAL\bn_T,\mu)_{\dt} = \tau_T (S_{\dT}^{\HHO}(\Pi^v_T(\ul{v}),\Pi^k_{\dT}(v|_{\dt})),S_{\dt}^{\HHO}(0,\mu))_{\dT}  &\qquad&\forall \mu \in \pbb^k(\cF_{\dT};\rbb),\label{H:proj.c}\\
&(\bPi^{\bsg}_T(\ul{v})-\bsg,\nabla q)_T = ((\bPi^k_{\dT}(\bsg|_{\dt})-\bsg)\SCAL\bn_T,q)_{\dT} &\qquad &\forall q \in \pbb^{k'}(T;\rbb), \label{H:proj.d}
\end{alignat} 
\end{subequations}
recalling the convention that $\pbb^{-1}(T;\rbb) = \{0\}$ and $\pbb^{-1}(T;\rbb^d) = \{\bold{0}\}$. Recall also that  $S_{\dt}^{\HHO} := S_{\dt}^{\rm{eo}}$ for $k'=k$ and $S_{\dt}^{\HHO} := S_{\dt}^{\rm{mo}}$ for $k'=k+1$  in \eqref{H:proj.c}. Moreover, since \eqref{H:proj.b} and the $L^2$-orthogonality of $\Pi^k_{\dt}$ imply that
\begin{align*}
(\bPi^{\bsg}_T(\ul{v})-\bsg,\nabla q)_T = 0 =  ((\bPi^k_{\dT}(\bsg|_{\dt})-\bsg)\SCAL\bn_T,q)_{\dT}\qquad\forall q\in\pbb^k(T;\rbb),%\label{4.24}
\end{align*} 
the equation  \eqref{H:proj.d} is redundant for $k'=k$, and
reduces to taking $q$ in the linear space $\tilde{\pbb}^{k+1}(T;\rbb)$ composed of all homogeneous polynomials of degree ($k+1$) for $k'=k+1$.  Altogether, we can rewrite the explicit definitions in the equal- and mixed-order cases as follows:

\noindent\textbf{Equal-order case}. For $k'=k$, the H-interpolation operator from \eqref{H:proj} is defined on any simplex $T\in\cT$ as follows:
\begin{subequations}\label{H:int}
\begin{alignat}{2}
(\pw_T(\uv)-v, w)_{T} &= 0 &\forall& w\in\pbb^{k-1}(T;\mathbb{R}),\label{H:1a}\\
(\ps_T(\uv)-\bsg,\bxi)_{T} &= 0  &\forall& \bxi\in\pbb^{k-1}(T;\mathbb{R}^d),\label{H:1b}\\
((\bsg-\ps_T(\uv)){\cdot}\bn_T,\mu)_{\partial T} &= \tau_T(S_{\dt}^{\rm{eo}}(\pw_T(\uv),\Pi^k_{\dt}(v|_{\dt})),S_{\dt}^{\rm{eo}}(0,\mu))_{\dt}\quad &\forall& \mu\in\pbb^{k}(\cF_{\partial T};\rbb).\label{H:1c}
\end{alignat}
\end{subequations}

\noindent\textbf{Mixed-order case}. For $k'=k+1$, the H-interpolation operator from \eqref{H:proj} is defined on any simplex $T\in\cT$ as follows:
\begin{subequations} \label{eq:def2_proj} \begin{alignat}{2}
&(\Pi^v_T(\ul{v})-v,w)_T = 0 &\quad &\forall w\in \pbb^{k-1}(T;\rbb),\label{eq:def2_proj.c}\\
&(\bPi^{\bsg}_T(\ul{v})-\bsg,\bxi)_T = 0 &\quad &\forall \bxi\in \pbb^{k-1}(T;\rbb^d),\label{eq:def2_proj.a} \\
&((\bsg-\bPi^{\bsg}_T(\ul{v}))\SCAL\bn_T,\mu)_{\dt} = \tau_T (S_{\dT}^{\rm{mo}}(\Pi^v_T(\ul{v}),\Pi^k_{\dT}(v|_{\dt})),S_{\dt}^{\rm{mo}}(0,\mu))_{\dT}  &\quad&\forall \mu \in \pbb^k(\cF_{\dT};\rbb),\label{eq:def2_proj.d}\\
&(\bPi^{\bsg}_T(\ul{v})-\bsg,\nabla \tilde{q})_T = ((\bPi^k_{\dT}(\bsg|_{\dt})-\bsg)\SCAL\bn_T,\tilde{q})_{\dT} &\quad &\forall \tilde{q} \in \tilde{\pbb}^{k+1}(T;\rbb). \label{eq:def2_proj.b}
\end{alignat} \end{subequations} 
The first three equations \eqref{eq:def2_proj.c}-\eqref{eq:def2_proj.d} are formally the same as in the equal-order case, see \eqref{H:1a}-\eqref{H:1c} (but the definition of the stabilization operator changes) and one more equation, \eqref{eq:def2_proj.b}, is added because of the additional degrees of freedom in the mixed-order case. 

\begin{remark}[Comparison with \cite{du2019invitation}]
	Let $\ul{v} :=(\bsg,v)\in (\bH^{\nu_{\bsg}}(T)\cap \bH(\nabla\cdot;T))\times H^{\nu_v}(T)$ with $\nu_{\bsg}>\frac{1}{2}$ and $\nu_v>\frac{1}{2}$. In the mixed-order case,  the action of the $\rm{H}$-interpolation operator gives the same result as the operator $(\ti{\bPi}{}^{\bsg}_T,\ti{\Pi}^v_T)$, defined in \cite[Chapter~4]{du2019invitation} as follows:
\begin{subequations}\label{hdg}
\begin{align}
&(\ti{\Pi}^v_T(\ul{v})-v,w)_T = 0 &\qquad &\forall w\in \pbb^{k-1}(T;\rbb),\label{hdg:a}\\
&(\ti{\bPi}{}^{\bsg}_T(\ul{v})-\bsg,\bxi)_T = 0 &\qquad &\forall \bxi\in \pbb^{k-1}(T;\rbb^d),\label{hdg:b} \\
&((\bsg-\ti{\bPi}{}^{\bsg}_T(\ul{v}))\SCAL\bn_T,\mu)_{\dt} =  \tau_T (S_{\dT}^{\rm{mo}}(\ti{\Pi}^v_T(\ul{v}),\Pi^k_{\dT}(v|_{\dt})),S_{\dt}^{\rm{mo}}(0,\mu))_{\dT} &\qquad&\forall \mu \in \pbb^k(\cF_{\dT};\rbb),\label{hdg:c}\\
&(\nabla\SCAL(\ti{\bPi}{}^{\bsg}_T(\ul{v})-\bsg), \tilde{q})_T = \tau_T (S_{\dt}^{\rm{mo}}(\ti{\Pi}^v_T(\ul{v}),\Pi^k_{\dT}(v|_{\dt})), S_{\dt}^{\rm{mo}}(\tilde{q},0))_{\dt} &\qquad &\forall \tilde{q} \in \tilde{\pbb}^{k+1}(T;\rbb).\label{hdg:d}
\end{align}
\end{subequations}
Indeed,	for all $\tilde{q}\in\tilde{\pbb}^{k+1}(T;\rbb)$, using \eqref{hdg:d} and  $S_{\dt}^{\rm{mo}}(\tilde{q},0)= -S_{\dt}^{\rm{mo}}(0,\Pi^k_{\dt}(\tilde{q}|_{\dt}))$ owing to \eqref{stab-id}, we infer that
\begin{align}
(\nabla\SCAL(\ti{\bPi}{}^{\bsg}_T(\ul{v})-\bsg), \tilde{q})_T &= -\tau_T (S_{\dt}^{\rm{mo}}(\ti{\Pi}^v_T(\ul{v}),\Pi^k_{\dT}(v|_{\dt})), S_{\dt}^{\rm{mo}}(0,\Pi^k_{\dt}(\tilde{q}|_{\dt})))_{\dt}\nonumber \\
&= ((\ti{\bPi}{}^{\bsg}_T(\ul{v})-\bsg)\SCAL\bn_T,\Pi^k_{\dt}(\tilde{q}|_{\dt}))_{\dt},\label{5.8}
\end{align}
invoking \eqref{hdg:c} with $\mu := -\Pi^k_{\dt}(\tilde{q}|_{\dt})$ in the last step. 
An integration by parts, \eqref{5.8}, and the $L^2$-orthogonality of $\Pi^k_{\dt}$ lead to
\begin{align}
(\ti{\bPi}{}^{\bsg}_T(\ul{v})-\bsg, \nabla \tilde{q})_T &= -(\nabla\SCAL(\ti{\bPi}{}^{\bsg}_T(\ul{v})-\bsg), \tilde{q})_T+ ((\ti{\bPi}{}^{\bsg}_T(\ul{v})-\bsg)\SCAL \bn_T, \tilde{q})_{\dt}\nonumber\\
&=((\bPi^k_{\dt}(\bsg|_{\dt})-\ti{\bPi}{}^{\bsg}_T(\ul{v}))\SCAL\bn_T,\tilde{q})_{\dt} + ((\ti{\bPi}{}^{\bsg}_T(\ul{v})-\bsg)\SCAL \bn_T, \tilde{q})_{\dt}\nonumber\\
& = ((\bPi^k_{\dt}(\bsg|_{\dt})-\bsg)\SCAL \bn_T, \tilde{q})_{\dt}.\label{4.6}
\end{align}
Altogether, the identities  \eqref{hdg:a}-\eqref{hdg:c} and \eqref{4.6} imply that $(\ti{\bPi}{}_T^{\bsg}(\ul{v}),\ti{\Pi}_T^v(\ul{v}))$ satisfies the  equations \eqref{eq:def2_proj}. Since \eqref{eq:def2_proj} has a unique solution owing to Lemma~\ref{lem:5.1} below, we infer that $(\ti{\bPi}{}_T^{\bsg}(\ul{v}),\ti{\Pi}_T^v(\ul{v}))=(\bPi_T^{\bsg}(\ul{v}),\Pi_T^v(\ul{v}))$. Notice though that the definition \eqref{hdg} needs $\bH(\nabla\cdot;T)$ regularity, which is not needed in the definition \eqref{eq:def2_proj}.  
\end{remark}

\section{Main results}\label{sec:main}

In this section, we present and comment the main results of the paper.

\subsection{Main results on the H-interpolation operator}

We show that the H-interpolation operator is well defined and satisfies superclose or optimal approximation estimates. Finally, we relax the regularity  assumption on the divergence of the  dual variable. The proofs of the results stated in this section are postponed to Section~\ref{sec:H-Int}.

\subsubsection{Well-posedness and approximation estimates}
\label{sec:H-wellp}

\begin{lemma}[Well-posedness]\label{lem:5.1}
For all $\ul{v}:=(\bsg,v)\in \bH^{\nu_{\bsg}}(T)\times H^{\nu_v}(T)$ with $\nu_{\bsg}>\frac{1}{2}$ and  $\nu_v>\frac{1}{2}$, the $\rm{H}$-interpolation operator $\ul{\Pi}^{\sH}_T$ is well-defined.
\end{lemma}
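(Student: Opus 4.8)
The plan is to read \eqref{H:proj.a}--\eqref{H:proj.d} as a \emph{square} linear system for the pair $(\bPi^{\bsg}_T(\ul v),\Pi^v_T(\ul v))\in\pbb^k(T;\rbb^d)\times\pbb^{k'}(T;\rbb)$ and thereby reduce well-posedness to uniqueness. A dimension count confirms squareness: setting $N_j:=\dim\pbb^j(T;\rbb)$ and $M_j:=\dim\pbb^j(F;\rbb)$, the test spaces in \eqref{H:proj.a}--\eqref{H:proj.c} furnish $N_{k-1}+dN_{k-1}+(d+1)M_k$ equations, to which \eqref{H:proj.d} adds $N_{k+1}-N_k$ equations in the mixed-order case (and nothing in the equal-order case, where it is redundant). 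The Pascal identity $N_k=N_{k-1}+M_k$ then gives exactly $(d+1)N_k$ equations in the equal-order case and $dN_k+N_{k+1}$ in the mixed-order case, matching the number of unknowns. Hence it suffices to prove that the homogeneous problem, obtained by taking $\bsg=\bzero$ and $v=0$, forces $\boldsymbol{\rho}:=\bPi^{\bsg}_T(\ul v)=\bzero$ and $\zeta:=\Pi^v_T(\ul v)=0$.

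The heart of the argument is an energy identity drawn from \eqref{H:proj.c}. I would test it with $\mu:=-\Pi^k_{\dt}(\zeta|_{\dt})$ and use $S_{\dt}^{\HHO}(\zeta,0)=-S_{\dt}^{\HHO}(0,\Pi^k_{\dt}(\zeta|_{\dt}))$ (the identity \eqref{stab-id}) to rewrite the right-hand side as $\tau_T\norm{S_{\dt}^{\HHO}(\zeta,0)}_{\dt}^2$. For the left-hand side, since $\boldsymbol{\rho}\SCAL\bn_T|_F\in\pbb^k(F;\rbb)$ one has $(\boldsymbol{\rho}\SCAL\bn_T,\Pi^k_{\dt}(\zeta|_{\dt}))_{\dt}=(\boldsymbol{\rho}\SCAL\bn_T,\zeta)_{\dt}$, and an integration by parts yields $(\boldsymbol{\rho}\SCAL\bn_T,\zeta)_{\dt}=(\nabla\SCAL\boldsymbol{\rho},\zeta)_T+(\boldsymbol{\rho},\nabla\zeta)_T$. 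The first term vanishes because $\nabla\SCAL\boldsymbol{\rho}\in\pbb^{k-1}(T;\rbb)$ and $\zeta\perp\pbb^{k-1}(T;\rbb)$ by \eqref{H:proj.a}. The second vanishes by \eqref{H:proj.b} in the equal-order case (where $\nabla\zeta\in\pbb^{k-1}(T;\rbb^d)$), and in the mixed-order case by splitting $\nabla\zeta\in\pbb^k(T;\rbb^d)$ into its part of degree $\le k-1$ (handled by \eqref{H:proj.b}) and its homogeneous top-degree part $\nabla\tilde{\pbb}^{k+1}(T;\rbb)$ (handled by \eqref{H:proj.d}). Thus the left-hand side is zero, whence $S_{\dt}^{\HHO}(\zeta,0)=0$, and the stability bound of Lemma~\ref{prop:stability} gives $\Pi^k_{\dt}(\zeta|_{\dt})=0$.

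From here I would conclude $\zeta=0$ and then $\boldsymbol{\rho}=\bzero$. In the equal-order case $\Pi^k_{\dt}(\zeta|_{\dt})=0$ already means $\zeta|_{\dt}=0$ (as $\zeta|_F\in\pbb^k(F;\rbb)$), and a bubble-function argument shows that $\zeta\in\pbb^k(T;\rbb)$ with vanishing trace and $\zeta\perp\pbb^{k-1}(T;\rbb)$ must vanish. In the mixed-order case I would instead use a Dirichlet-energy argument: integrating $\norm{\nabla\zeta}_T^2$ by parts, the volume term drops since $\Delta\zeta\in\pbb^{k-1}(T;\rbb)$ together with \eqref{H:proj.a}, and the boundary term drops since $(\nabla\zeta\SCAL\bn_T,\zeta)_F=(\nabla\zeta\SCAL\bn_T,\Pi^k_F(\zeta|_F))_F=0$; hence $\zeta$ is constant and then zero by \eqref{H:proj.a}. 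With $\zeta=0$, equation \eqref{H:proj.c} reduces to $(\boldsymbol{\rho}\SCAL\bn_T,\mu)_{\dt}=0$ for all $\mu$, so $\boldsymbol{\rho}\SCAL\bn_T=0$ on $\dt$; testing $(\boldsymbol{\rho},\nabla q)_T$ with $q\in\pbb^k(T;\rbb)$ (equal-order) or $q\in\pbb^{k+1}(T;\rbb)$ (mixed-order, using \eqref{H:proj.b} and \eqref{H:proj.d}) and integrating by parts then forces $\nabla\SCAL\boldsymbol{\rho}=0$.

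The remaining, and principal, obstacle is the purely polynomial unisolvence statement left over: a field $\boldsymbol{\rho}\in\pbb^k(T;\rbb^d)$ that is divergence-free, has zero normal trace on $\dt$, and is $L^2(T)$-orthogonal to $\pbb^{k-1}(T;\rbb^d)$ must vanish. I would establish this via a potential representation ($\boldsymbol{\rho}=\underline{\mathrm{curl}}\,\phi$ when $d=2$, a vector potential when $d=3$): the zero normal trace forces the potential to be constant along the connected boundary $\dt$, hence, after normalization, divisible by the element bubble, and the orthogonality to $\pbb^{k-1}(T;\rbb^d)$ then annihilates the remaining factor through a positivity argument. This is the most delicate step, especially in three dimensions owing to the gauge freedom in the vector potential, and I expect to isolate it as one of the auxiliary polynomial lemmas of Section~\ref{sec:aux}; the degenerate low-order instances (notably $k=0$, and small $k$ for which the relevant bubble has degree exceeding $k$) are checked directly and are immediate.
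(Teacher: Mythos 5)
Your reduction to a square homogeneous system and your treatment of the primal component follow the paper's own strategy, but the decisive step you leave open for the dual component is a genuine gap. You reduce everything to the claim that a field $\boldsymbol{\rho}\in\pbb^k(T;\rbb^d)$ that is divergence-free, satisfies $\boldsymbol{\rho}\SCAL\bn_T=0$ on $\dt$, and is $L^2(T)$-orthogonal to $\pbb^{k-1}(T;\rbb^d)$ must vanish, and you defer its proof to a potential representation. That sketch can be completed for $d=2$ (write $\boldsymbol{\rho}$ as the curl of a scalar potential vanishing on $\dt$, hence divisible by the element bubble $b_T$, and pair with a field in $\pbb^{k-1}(T;\rbb^2)$ whose rotation equals the cofactor), but for $d=3$ it requires a \emph{polynomial} vector potential of degree $k+1$ with vanishing tangential trace, i.e.\ exactness of the polynomial de Rham complex with boundary conditions on a simplex; this is a nontrivial result that you neither prove nor cite, and it is not covered by ``checked directly''. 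The detour is also unnecessary: the auxiliary polynomial lemma you anticipate in Section~\ref{sec:aux} is Lemma~\ref{lem:3.5} (proved by a scaling argument and \cite[Lemma~2.1]{du2019invitation}), which gives $\norm{\bq}_{T}\lesssim \norm{\bPi_T^{k-1}(\bq)}_T+\sum_{i\in \{1:d\}} h_T^{\frac12} \norm{\bq\SCAL\bn_{T}}_{F_i}$ for any $\bq\in\pbb^k(T;\rbb^d)$, with no divergence hypothesis and normal traces on only $d$ faces. Applying it to $\boldsymbol{\rho}$, whose $\pbb^{k-1}$-moments vanish by the homogeneous \eqref{H:proj.b} and whose normal trace you have already shown to vanish, yields $\boldsymbol{\rho}=\bzero$ at once; your derivation of $\nabla\SCAL\boldsymbol{\rho}=0$ then becomes superfluous. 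If you insist on your route, the three-dimensional exactness statement must be isolated and proved (or properly cited) as a separate lemma.

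Two smaller points on the primal component $\zeta:=\Pi^v_T(\ul{0})$, whose treatment otherwise mirrors the paper's Steps~1--2 (same energy identity extracted from \eqref{H:proj.c}, same degree splitting of $\nabla\zeta$ in the mixed-order case). In the equal-order case, the deduction ``$S_{\dt}^{\mathrm{eo}}(\zeta,0)=0$ and Lemma~\ref{prop:stability} give $\Pi^k_{\dt}(\zeta|_{\dt})=0$'' is not justified as stated: Lemma~\ref{prop:stability} is an equivalence involving both $\bG_T$ and $S_{\dt}^{\HHO}$, so you must also record that $\bG_T(\zeta,0)=\bzero$, which holds because $(\bG_T(\zeta,0),\bxi_T)_T=-(\zeta,\dv\bxi_T)_T=0$ by the homogeneous \eqref{H:proj.a}. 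Once this is noted, the stability lemma gives $\nabla\zeta=\bzero$ and $\zeta|_{\dt}=0$, i.e.\ $\zeta=0$ outright, making your bubble-function argument redundant; alternatively, the paper shows that $R_T(\zeta,0)$ is constant, so the correction term in \eqref{HHO-stab:1a} drops and $S_{\dt}^{\mathrm{eo}}(\zeta,0)=\Pi^k_{\dt}(\zeta|_{\dt})$ directly (in the mixed-order case no such argument is needed, by \eqref{HHO-stab:1b}). Finally, in your mixed-order Dirichlet-energy step, the closing clause ``then zero by \eqref{H:proj.a}'' fails for $k=0$, where that orthogonality is vacuous; the constant must instead be killed by $\Pi^k_{\dt}(\zeta|_{\dt})=0$, as in the paper's Step~2.
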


\begin{theorem}[Approximation estimates]\label{prop:A1}
For all $\uv :=(\bsg,v)\in (\bH^{\nu_{\bsg}}(T)\cap \bH(\nabla\cdot;T))\times H^{1}(T)$ with $\nu_{\bsg}>\frac{1}{2}$, the following estimates hold:
\begin{subequations}\label{est}
\begin{align}
\norm{\pw_T(\uv) - \Pi^{k'}_T(v)}_T &\lesssim \ell_\Omega^{-1}h_T^2\norm{\nabla{\cdot}\bsg - \Pi^{k-1}_T(\nabla{\cdot}\bsg)}_T+h_T\norm{\nabla(v-{\cal{E}}_T^{k+1}(v))}_T,\label{est:pw}\\ 
\norm{\ps_T(\uv)-\bPi^k_T(\bsg)}_{T}&\lesssim h_T^{\frac{1}{2}}\norm{(\bPi^k_T(\bsg)-\bsg){\cdot}\bn_T}_{\dt}+h_T\norm{\nabla{\cdot}\bsg - \Pi^{k-1}_T(\nabla{\cdot}\bsg)}_T\nonumber\\&\quad+\ell_\Omega\norm{\nabla(v-{\cal{E}}_T^{k+1}(v))}_T,\label{est:ps} 
\end{align}
\end{subequations}
with the convention that  $\Pi^{k-1}_T(\nabla{\cdot}\bsg):= 0$ for $k=0$.
\end{theorem}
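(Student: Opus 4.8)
The plan is to bound the two polynomial differences with the $L^2$-projections, namely $\boldsymbol\theta:=\ps_T(\uv)-\bPi^k_T(\bsg)\in\pbb^k(T;\rbb^d)$ and $\vartheta:=\pw_T(\uv)-\Pi^{k'}_T(v)\in\pbb^{k'}(T;\rbb)$, the remaining $L^2$-projection errors being handled by \eqref{L2:proj-est} and \eqref{est:elliptic}. Subtracting from \eqref{H:proj.a}--\eqref{H:proj.b} the defining relations of $\Pi^{k'}_T$ and $\bPi^k_T$ yields the $L^2(T)$-orthogonalities $(\vartheta,w)_T=0$ and $(\boldsymbol\theta,\bxi)_T=0$ for all $w\in\pbb^{k-1}(T;\rbb)$ and $\bxi\in\pbb^{k-1}(T;\rbb^d)$. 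Inserting $\ps_T(\uv)=\bPi^k_T(\bsg)+\boldsymbol\theta$ and $\pw_T(\uv)=\Pi^{k'}_T(v)+\vartheta$ into \eqref{H:proj.c} and using linearity of $S_{\dt}^{\HHO}$ with the reduction identities behind \eqref{HHO:stab-re} (so that $S_{\dt}^{\HHO}(\cdot,0)$ and $S_{\dt}^{\HHO}(0,\cdot)$ are, up to sign, the same operator $\tilde S_{\dt}^{\HHO}$ acting on face traces), I would rewrite the face equation as a relation expressing $(\boldsymbol\theta\SCAL\bn_T,\mu)_{\dt}$, for all $\mu\in\pbb^k(\cF_{\dt};\rbb)$, through three contributions: the projection-error trace $-((\bPi^k_T(\bsg)-\bsg)\SCAL\bn_T,\mu)_{\dt}$; a \emph{coupling} term $\tau_T(\tilde S_{\dt}^{\HHO}(\vartheta|_{\dt}),\tilde S_{\dt}^{\HHO}(\mu))_{\dt}$; and a \emph{consistency} term $\tau_T(S_{\dt}^{\HHO}(\Pi^{k'}_T(v),\Pi^k_{\dt}(v|_{\dt})),\cdot)$. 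The last one is the stabilization of the reduction of $v$, which by the identity $R_T(\Pi^{k'}_T(v),\Pi^k_{\dt}(v|_{\dt}))={\cal E}_T^{k+1}(v)$ stemming from \eqref{def:reconst} satisfies $\norm{S_{\dt}^{\HHO}(\Pi^{k'}_T(v),\Pi^k_{\dt}(v|_{\dt}))}_{\dt}\lesssim h_T^{1/2}\norm{\nabla(v-{\cal E}_T^{k+1}(v))}_T$. In the mixed-order case I would also record from \eqref{eq:def2_proj.b} the gradient-moment identity $(\boldsymbol\theta,\nabla\tilde q)_T=((\bPi^k_{\dt}(\bsg|_{\dt})-\bsg)\SCAL\bn_T,\tilde q)_{\dt}$ for all $\tilde q\in\tilde{\pbb}^{k+1}(T;\rbb)$; in the equal-order case it is inactive since $\vartheta,\boldsymbol\theta$ carry no degree-$(k+1)$ content.

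\textbf{Two auxiliary facts.} I would rely on two elementary polynomial estimates (to be placed in Section~\ref{sec:aux}): for $\boldsymbol\chi\in\pbb^k(T;\rbb^d)$ with $\Pi^{k-1}_T(\boldsymbol\chi)=\bzero$ one has $\norm{\boldsymbol\chi}_T\lesssim h_T^{1/2}\norm{\boldsymbol\chi\SCAL\bn_T}_{\dt}$, and for $w\in\pbb^{k'}(T;\rbb)$ with $\Pi^{k-1}_T(w)=0$ one has $\norm{w}_T\lesssim h_T^{1/2}\norm{\Pi^k_{\dt}(w|_{\dt})}_{\dt}$. The first follows by combining a trace-injectivity bound with the observation that, by integration by parts and the orthogonality $(\boldsymbol\chi,\nabla p)_T=0$ for $p\in\pbb^{k-1}$, the divergence is dominated by the normal trace, $\norm{\nabla\SCAL\boldsymbol\chi}_T\lesssim h_T^{-1/2}\norm{\boldsymbol\chi\SCAL\bn_T}_{\dt}$.

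\textbf{Primal estimate \eqref{est:pw}.} I would prove this first, since it decouples from $\boldsymbol\theta$. Testing the face equation with $\mu=\Pi^k_{\dt}(\vartheta|_{\dt})$ (equal-order: with the $\mu$ for which $\tilde S_{\dt}^{\HHO}(\mu)=\tilde S_{\dt}^{\HHO}(\vartheta|_{\dt})$) produces the coercive term $\tau_T\norm{\tilde S_{\dt}^{\HHO}(\vartheta|_{\dt})}_{\dt}^2$, bounded below by $\tau_T h_T^{-1}\norm{\vartheta}_T^2$ via Lemma~\ref{prop:stability} and the second auxiliary fact. The coupling term becomes $(\boldsymbol\theta\SCAL\bn_T,\vartheta)_{\dt}=(\boldsymbol\theta,\nabla\vartheta)_T$ after integration by parts (using $(\nabla\SCAL\boldsymbol\theta,\vartheta)_T=0$, as $\nabla\SCAL\boldsymbol\theta\in\pbb^{k-1}$ and $\vartheta\perp\pbb^{k-1}$); this vanishes in the equal-order case ($\nabla\vartheta\in\pbb^{k-1}\perp\boldsymbol\theta$) and, in the mixed-order case, is converted by the gradient-moment identity into a face term in the data $\bsg$. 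The key step is then a cancellation: this converted term combines with the projection-error trace to collapse, after one further integration by parts, into the single cell term $-((1-\Pi^{k-1}_T)(\nabla\SCAL\bsg),\vartheta)_T$ (here $\bH(\dv;T)$-regularity, but no more, is used, and $\vartheta\perp\pbb^{k-1}$ introduces $(1-\Pi^{k-1}_T)$ on the divergence). Combining with the consistency bound, with $\norm{\vartheta}_T\lesssim h_T^{1/2}\norm{\Pi^k_{\dt}(\vartheta|_{\dt})}_{\dt}$, and with $\tau_T=\ell_\Omega h_T^{-1}$ yields exactly $\ell_\Omega^{-1}h_T^2\norm{(1-\Pi^{k-1}_T)(\nabla\SCAL\bsg)}_T$ and $h_T\norm{\nabla(v-{\cal E}_T^{k+1}(v))}_T$ (the latter after bounding $\norm{(\Pi^{k'}_T(v)-v)|_{\dt}}_{\dt}\lesssim h_T^{1/2}\norm{\nabla(v-{\cal E}_T^{k+1}(v))}_T$ by a trace inequality and a Poincaré estimate).

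\textbf{Dual estimate \eqref{est:ps} and main obstacle.} By the first auxiliary fact $\norm{\boldsymbol\theta}_T\lesssim h_T^{1/2}\norm{\boldsymbol\theta\SCAL\bn_T}_{\dt}$, so it remains to bound the normal trace. Testing the face equation with $\mu=\boldsymbol\theta\SCAL\bn_T$ gives $\norm{\boldsymbol\theta\SCAL\bn_T}_{\dt}^2$ on the left and, on the right, the projection-error trace (contributing $h_T^{1/2}\norm{(\bPi^k_T(\bsg)-\bsg)\SCAL\bn_T}_{\dt}$ to the final bound), the consistency term (contributing $\ell_\Omega\norm{\nabla(v-{\cal E}_T^{k+1}(v))}_T$), and the coupling term $\tau_T\tilde S_{\dt}^{\HHO}(\vartheta|_{\dt})$, which is exactly the quantity already estimated in the primal step (contributing the $h_T\norm{(1-\Pi^{k-1}_T)(\nabla\SCAL\bsg)}_T$ term). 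The main obstacle, and the place where the two orders genuinely differ, is the treatment of this coupling: one must avoid a circular dependence between $\norm{\boldsymbol\theta}_T$ and $\norm{\vartheta}_T$. I resolve it by ordering the arguments (primal first, then dual) and, in the mixed-order case, by exploiting the gradient-moment identity \eqref{eq:def2_proj.b} to turn the $(\boldsymbol\theta,\nabla\vartheta)_T$ coupling into pure data; the delicate algebraic cancellation producing $(1-\Pi^{k-1}_T)(\nabla\SCAL\bsg)$ is the crux of the whole proof, and it is precisely what removes any normal-trace term from the sharper primal bound \eqref{est:pw}.
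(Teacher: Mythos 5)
Your proposal is correct and follows essentially the same route as the paper's proof: your inline ``collapse'' computation (combining \eqref{H:proj.c} with the gradient-moment relation \eqref{H:proj.d} and an integration by parts to produce the single cell term $-((1-\Pi^{k-1}_T)(\nabla{\cdot}\bsg),\vartheta)_T$) is precisely the paper's Lemma~\ref{lem:A3}, and your coercivity step (testing with $\mu=\Pi^k_{\dt}(\vartheta|_{\dt})$ and invoking Lemma~\ref{prop:stability} together with the polynomial bound on $\pbb^{k'}_\perp(T;\rbb)$) is the content of Lemma~\ref{lem:4.1}, merely unpacked instead of phrased through the dual norm $\norm{\cdot}_{(\pbb^{k'}_\perp)'}$. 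Your dual estimate likewise mirrors the paper's part (2): reduction to the normal trace via Lemma~\ref{lem:3.5}, then the face equation \eqref{H:proj.c} bounded with $\norm{S_{\dt}^{\HHO}(0,q)}_{\dt}\lesssim\norm{q}_{\dt}$, the consistency bound \eqref{S1}, and the already-established primal bound \eqref{est:pw}, so the two arguments differ only in presentation, not in substance.
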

\begin{remark}[Supercloseness/optimality]\label{rem:sup}
Theorem~\ref{prop:A1} gives a supercloseness (resp. optimal) estimate for the primal variable in the equal- (resp. mixed-order) case, and an optimal estimate for the dual variable in both equal- and mixed-order cases. In particular,
when $\uv:=(\bsg,v)\in \bH^{k+1}(\Omega)\times H^{k+2}(\Omega)$, the estimate~\eqref{est:pw} followed by the classical bounds from \eqref{L2:proj-est} and \eqref{est:elliptic} gives 
\begin{align*}
\norm{\pw_T(\uv) - \Pi^{k'}_T(v)}_T \lesssim \ell_\Omega^{-1}h_T^{k+2}(|\bsg|_{\bH^{k+1}(\Omega)}+\ell_\Omega|v|_{H^{k+2}(\Omega)}).
\end{align*}
This is the key estimate to achieve an improved $L^2$-error estimate on the time-integrated primal variable (see Theorem~\ref{thm:tim-int-err} below).
\end{remark}

\begin{remark}[Approximation estimates]\label{rem:5.5}
For all $\uv :=(\bsg,v)\in (\bH^{\nu_{\bsg}}(T)\cap \bH(\nabla\cdot;T))\times H^{1}(T)$ with $\nu_{\bsg}>\frac{1}{2}$, the triangle inequality and Theorem~\ref{prop:A1} provide the following estimates:
\begin{subequations}\label{approx-est}
\begin{align}
\norm{v-\pw_T(\uv)}_T  &\lesssim 	\norm{v - \Pi^{k'}_T(v)}_T + \ell^{-1}_\Omega h_T^2\norm{\nabla{\cdot}\bsg - \Pi^{k-1}_T(\nabla{\cdot}\bsg)}_T+h_T\norm{\nabla(v-{\cal{E}}_T^{k+1}(v))}_T,\label{approx-est.a}\\ 
\norm{\bsg-\ps_T(\uv)}_T&\lesssim \norm{\bsg-\bPi^k_T(\bsg)}_{T}+ h_T^{\frac{1}{2}}\norm{(\bPi^k_T(\bsg)-\bsg){\cdot}\bn_T}_{\dt}+ h_T\norm{\nabla{\cdot}\bsg - \Pi^{k-1}_T(\nabla{\cdot}\bsg)}_T\nonumber\\&\quad+\ell_\Omega\norm{\nabla(v-{\cal{E}}_T^{k+1}(v))}_T.\label{approx-est.b}
\end{align}
\end{subequations}
\end{remark}	

\begin{remark}[Separation of primal and dual variables]
It is interesting to observe that there is no relation between $v$ and $\bsg$ in the definition \eqref{H:proj} and in the estimates  \eqref{est} from Theorem~\ref{prop:A1}.  Thus, the estimate \eqref{est:pw} is equivalent to the two estimates
\begin{align*}
\norm{\pw_T(\bold{0},v)-\Pi^{k'}_T(v)}_T  &\lesssim h_T\norm{\nabla(v-{\cal{E}}_T^{k+1}(v))}_T,\\
\norm{\pw_T(\bsg,0)}_T  &\lesssim \ell^{-1}_\Omega h_T^2\norm{\nabla{\cdot}\bsg - \Pi^{k-1}_T(\nabla{\cdot}\bsg)}_T,
\end{align*}
and the estimate \eqref{est:ps} to
\begin{align*}
\norm{\ps_T(\bsg,0)-\bPi^k_T(\bsg)}_T&\lesssim  h_T^{\frac{1}{2}}\norm{(\bPi^k_T(\bsg)-\bsg){\cdot}\bn_T}_{\dt}+ h_T\norm{\nabla{\cdot}\bsg - \Pi^{k-1}_T(\nabla{\cdot}\bsg)}_T,\\
\norm{\ps_T(\bold{0},v)}_T&\lesssim \ell_\Omega\norm{\nabla(v-{\cal{E}}_T^{k+1}(v))}_T.
\end{align*}
\end{remark}
\begin{remark}[Relaxed regularity]
The approximation estimates in \cite[Proposition~4.6]{du2019invitation} are proved assuming $\bsg\in \bH^1(T)$, whereas we only assume $\bsg \in \bH^{\nu_{\bsg}}(T)\cap \bH(\nabla\cdot;T)$ with $\nu_{\bsg}>\frac{1}{2}$. We show how to relax the regularity on $\bH(\nabla\cdot;T)$ next.
\end{remark}

\subsubsection{Extension to weaker regularity of dual variable}
\label{sec:H-weakregularity}

Now, we relax the regularity assumption $\bsg\in\bH(\nabla{\cdot};T)$ on any simplex $T\in\cT$, required in Theorem~\ref{prop:A1}. Set $\bH^{s-1}(\nabla\cdot;T):=\{\bsg\in\bL^2(T) : \nabla{\cdot}\bsg \in H^{s-1}(T)\}$ with $s\in(\frac{1}{2},1]
$. Notice that, for $s=1$, we have $\bH^{0}(\nabla\cdot;T) = \bH(\nabla\cdot;T)$.
We define the dual norm
\begin{align}
\|\nabla{\cdot}\bsg\|_{H^{s-1}(T)} := \sup_{w\in H^{1-s}_0(T)\setminus\{0\}} \frac{\langle \nabla{\cdot}\bsg, w\rangle_T}{|w|_{H^{1-s}(T)}},\label{def-dualnorm}
\end{align}
where $\langle {\cdot}, {\cdot}\rangle_T$ denotes the
duality pairing between $H^{s-1}(T)$ and $H_0^{1-s}(T)$. Notice that
$1-s <\frac{1}{2}$, so that $H^{1-s}_0(T)=H^{1-s}(T)$ (see e.g., \cite[Theorem~3.19]{ErnGuermondI}). We also
observe that $\Pi^{k-1}_T(\nabla{\cdot}\bsg)$ remains well-defined even if
$\bsg \in \bH^{s-1}(\nabla{\cdot};T)$. 
\begin{theorem}[Approximation estimates with negative regularity on $\nabla{\cdot}\bsg$]\label{thm:relax-reg}
For all $\uv :=(\bsg,v)\in (\bH^{\nu_{\bsg}}(T)\cap \bH^{s-1}(\nabla\cdot;T))\times H^{1}(T)$ with $\nu_{\bsg}>\frac{1}{2}$ and $s\in(\frac{1}{2},1]$, the following estimate holds:
\begin{subequations}\label{est:relax}
\begin{align}
\norm{\pw_T(\uv) - \Pi^{k'}_T(v)}_T &\lesssim \ell_\Omega^{-1}h_T^{1+s}\norm{\nabla{\cdot}\bsg - \Pi^{k-1}_T(\nabla{\cdot}\bsg)}_{H^{s-1}(T)}+h_T\norm{\nabla(v-{\cal{E}}_T^{k+1}(v))}_T,\label{est:pw-relax}\\ 
\norm{\ps_T(\uv)-\bPi^k_T(\bsg)}_{T}&\lesssim h_T^{\frac{1}{2}}\norm{(\bPi^k_T(\bsg)-\bsg){\cdot}\bn_T}_{\dt}+h_T^s\norm{\nabla{\cdot}\bsg - \Pi^{k-1}_T(\nabla{\cdot}\bsg)}_{H^{s-1}(T)}\nonumber\\&\quad+\ell_\Omega\norm{\nabla(v-{\cal{E}}_T^{k+1}(v))}_T.\label{est:ps-relax} 
\end{align}
\end{subequations}
\end{theorem}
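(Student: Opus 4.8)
The plan is to revisit the proof of Theorem~\ref{prop:A1} and to localize precisely where the term $\nabla{\cdot}\bsg - \Pi^{k-1}_T(\nabla{\cdot}\bsg)$ enters. In that proof, the regularity $\bsg \in \bH(\nabla{\cdot};T)$ is used in exactly one place: to bound a duality product of the form $(\nabla{\cdot}\bsg - \Pi^{k-1}_T(\nabla{\cdot}\bsg), \psi)_T$ by the Cauchy--Schwarz inequality in $L^2(T)$, where $\psi$ is a polynomial test function on $T$ (an error quantity whose $L^2(T)$-norm is already controlled by the left-hand side of the target estimate). Every other term in the proof of \eqref{est:pw}--\eqref{est:ps} --- in particular those carrying $\norm{\nabla(v - {\cal{E}}_T^{k+1}(v))}_T$ and $\norm{(\bPi^k_T(\bsg)-\bsg)\SCAL\bn_T}_{\dt}$ --- is independent of the divergence of $\bsg$ and therefore carries over verbatim. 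Thus it suffices to replace the single $L^2$-estimate by a sharper one adapted to the negative-order norm.

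For this replacement, I would use the duality pairing between $H^{s-1}(T)$ and $H^{1-s}_0(T)$ in place of the $L^2$-inner product. Since $s \in (\tfrac12,1]$ gives $1-s < \tfrac12$, we have $H^{1-s}_0(T) = H^{1-s}(T)$, so the polynomial $\psi$ belongs to $H^{1-s}_0(T)$ and the pairing is admissible. Moreover, because $\nabla{\cdot}\bsg - \Pi^{k-1}_T(\nabla{\cdot}\bsg)$ is $L^2(T)$-orthogonal to $\pbb^{k-1}(T)$, I may replace $\psi$ by $\psi - \Pi^{k-1}_T(\psi)$ in the pairing at no cost (for $k\ge1$ this also removes the constant mode, so the seminorm in \eqref{def-dualnorm} is a legitimate denominator). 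The definition \eqref{def-dualnorm} then yields
\[
|(\nabla{\cdot}\bsg - \Pi^{k-1}_T(\nabla{\cdot}\bsg), \psi)_T| \le \norm{\nabla{\cdot}\bsg - \Pi^{k-1}_T(\nabla{\cdot}\bsg)}_{H^{s-1}(T)}\,|\psi - \Pi^{k-1}_T(\psi)|_{H^{1-s}(T)}.
\]

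The remaining point is to convert the fractional seminorm into an $L^2(T)$-norm with the correct mesh-size power. For this I would prove (or invoke) the fractional inverse inequality $|\psi|_{H^{1-s}(T)} \lesssim h_T^{-(1-s)}\norm{\psi}_T = h_T^{s-1}\norm{\psi}_T$, valid for any polynomial $\psi$ on a shape-regular simplex $T$. This follows from scaling to a reference simplex $\hat T$ --- under which the Gagliardo seminorm satisfies $|\psi|_{H^{1-s}(T)} \simeq h_T^{(d-2(1-s))/2}|\hat\psi|_{H^{1-s}(\hat T)}$ while $\norm{\psi}_T \simeq h_T^{d/2}\norm{\hat\psi}_{\hat T}$ --- combined with the equivalence of all norms on the finite-dimensional polynomial space on $\hat T$. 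Since $\norm{\psi - \Pi^{k-1}_T(\psi)}_T \le \norm{\psi}_T$ is already controlled by the left-hand side of the target estimate, this extra factor $h_T^{s-1}$ transforms the prefactor $h_T^2$ of the $L^2$-based bound in \eqref{est:pw} into $h_T^{1+s}$ in \eqref{est:pw-relax}, and $h_T$ in \eqref{est:ps} into $h_T^s$ in \eqref{est:ps-relax}. Collecting the (unchanged) remaining terms exactly as in Theorem~\ref{prop:A1} then gives \eqref{est:relax}.

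The main obstacle I anticipate is establishing the fractional inverse inequality with a sharp and uniform $h_T$-dependence: one must track carefully how the Gagliardo seminorm scales under the affine pullback and ensure shape-regularity makes the implied constant independent of $T$. A secondary technical point is the legitimacy of the duality pairing against the seminorm in \eqref{def-dualnorm}: for $k\ge1$ the orthogonality to $\pbb^{k-1}(T)$ removes the kernel (constants) of the seminorm, whereas the lowest-order case $k=0$ (where $\Pi^{k-1}_T = 0$ and the pairing naturally involves the mean of $\nabla{\cdot}\bsg$) must be handled directly through the negative-order norm, consistent with the convention $\Pi^{k-1}_T(\nabla{\cdot}\bsg) := 0$ adopted in the statement.
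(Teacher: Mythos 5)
Your proposal is correct and follows essentially the same route as the paper's proof: the paper likewise isolates the single occurrence of the divergence pairing in the linear form $\phi_{\bsg}$ from Step~(1) of the proof of Theorem~\ref{prop:A1}, replaces it by the $H^{s-1}(T)$--$H^{1-s}_0(T)$ duality pairing from \eqref{def-dualnorm} (using the orthogonality to $\pbb^{k-1}(T;\rbb)$ to insert $\Pi^{k-1}_T(\nabla{\cdot}\bsg)$), applies the fractional inverse inequality $|w|_{H^{1-s}(T)}\lesssim h_T^{s-1}\norm{w}_T$ for polynomials, and leaves every other term unchanged, so that \eqref{est:ps-relax} is inherited automatically through \eqref{est:pw-relax}. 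The only differences are presentational: you sketch the scaling proof of the inverse estimate and discuss the kernel of the seminorm in \eqref{def-dualnorm}, both of which the paper simply invokes or leaves implicit.
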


\subsection{A priori error analysis for the wave equation}
This section presents the energy and $L^2$-error estimates for the wave equation. We notice that our main focus is on the $L^2$-error estimate, whose proof does not use the energy-error estimate. We present the latter to showcase the improvement achieved by the former. The energy-error estimate is also of independent interest. The proofs of the results stated in this section are postponed to Section~\ref{sec:errest}.

\subsubsection{Errors and their equations}

Let $\uv:=(\bsg,v)\in C^0(\ol{J};\ul{V})\cap C^1(\ol{J};\ul{L})$ solve the continuous problem \eqref{weak-form} and let $(\bsg_\cT,\h{v}_\cM)\in C^1(\ol{J};\bS_\cT^k\times\h{V}_{\cM 0}^{k})$ solve the discrete problem \eqref{eq:HHO}. Let $\ul{D}^{\sH}:= \bH^{\nu_{\bsg}}(\Omega)\times H^{\nu_v}(\Omega)$ with $\nu_{\bsg}>\frac{1}{2}$ and $\nu_v>\frac{1}{2}$ be the domain of the global H-interpolation operator $(\bPi^{\bsg}_\cT,\Pi^v_\cT)$, which is defined componentwise as $ \bPi^{\bsg}_\cT({\cdot})|_T := \bPi^{\bsg}_T({\cdot}|_T)$ and  $\Pi^v_\cT({\cdot})|_T := \Pi^v_T({\cdot}|_T)$ for all $T\in\cT$ with $(\bPi^{\bsg}_T,\Pi^v_T)$ defined by \eqref{H:proj}, i.e., by \eqref{H:int} in the equal-order case and by \eqref{eq:def2_proj} in the mixed-order case. Thus, $(\bPi^{\bsg}_\cT,\Pi^v_\cT):\ul{D}^{\sH}\to \bS_\cT^k\times V_{\cT}^{k'}$. 

We define the space semi-discrete errors, for all $t\in \ol{J}$, as
\begin{alignat*}{4}
&\text{H-errors:}
&&\quad\bet_\cT^{\sH} (t) &&:= \bsg_\cT(t) - \bPi^{\bsg}_\cT(\ul{v}(t)), &&\qquad \h{e}_\cM^{\sH}(t):= \h{v}_\cM(t)-(\Pi^v_\cT(\uv(t)),\Pi_\cF^k(v(t)|_\cF)),
\\
&\text{HHO-errors:}
&&\quad\bet_\cT^{\sPi} (t) &&:= \bsg_\cT(t) - \bPi^{k}_\cT(\bsg(t)),
&&\qquad \h{e}_\cM^{\sPi}(t):= \h{v}_\cM(t)-(\Pi^{k'}_\cT(v(t)),\Pi^k_\cF(v(t)|_\cF)),
\end{alignat*}	
where $(\bPi^k_{\cT},\Pi^{k'}_\cT):\ul{L}\to \bS^k_{\cT}\times V_{\cT}^{k'}$ are the global $L^2$-orthogonal projections.

The error equations associated with the above errors have different properties. 
Interestingly, the combination of the two 
helps in proving the improved $L^2$-error estimates.

\begin{lemma}[Error equations using H-interpolation]\label{lem:6.1}
Assume that $\uv\in  C^1(\ol{J};\ul{D}^{\sH})$. The following holds: For all $t\in \ol{J}$ and all $(\bxi_\cT,\h{w}_\cM)\in \bS_\cT^k\times\h{V}_{\cM 0}^{k}$,
\begin{subequations} \label{h-error}
\begin{align}
&(\pt\bet_\cT^{\sH}(t),\bxi_\cT)_{\Omega} - (\bG_\cT(\h{e}_\cM^{\sH}(t)),\bxi_\cT)_{\Omega}=(\pt(\bsg(t)-\bPi_\cT^{\bsg}(\uv(t))),\bxi_\cT)_{\Omega},\label{h-error:1a}\\
&(\pt e_\cT^{\sH}(t),w_\cT)_{\Omega} +(\bet_\cT^{\sH}(t), \bG_\cT(\h{w}_\cM))_{\Omega} +s_{\cM}(\h{e}_\cM^{\sH}(t),\h{w}_\cM) = (\partial_t(v(t)-\Pi^v_\cT(\ul{v}(t))),w_\cT)_{\Omega}.\label{h-error:1b}
\end{align}
\end{subequations}
\end{lemma}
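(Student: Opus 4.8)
The plan is to substitute the definitions of the H-errors into the semi-discrete scheme \eqref{eq:HHO} and to recognize the interpolant contributions as consistency residuals. Setting $\bsg_\cT = \bet_\cT^{\sH} + \bPi^{\bsg}_\cT(\uv)$ and $\h{v}_\cM = \h{e}_\cM^{\sH} + \h{\Pi}^{\sH}$ with the shorthand $\h{\Pi}^{\sH} := (\Pi^v_\cT(\uv),\Pi^k_\cF(v|_\cF))$, and plugging these into \eqref{eq:HHO_flux}--\eqref{eq:HHO_primal}, the terms carrying $\bet_\cT^{\sH}$ and $\h{e}_\cM^{\sH}$ reproduce the left-hand sides of \eqref{h-error:1a}--\eqref{h-error:1b}. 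It then remains to establish two consistency identities: for \eqref{h-error:1a}, that $(\bG_\cT(\h{\Pi}^{\sH}),\bxi_\cT)_\Omega = (\nabla v,\bxi_\cT)_\Omega$, which equals $(\pt\bsg,\bxi_\cT)_\Omega$ by the first line of \eqref{weak-form}; for \eqref{h-error:1b}, that $(\bPi^{\bsg}_\cT(\uv),\bG_\cT(\h{w}_\cM))_\Omega + s_\cM(\h{\Pi}^{\sH},\h{w}_\cM) = -(\nabla{\cdot}\bsg,w_\cT)_\Omega$, which equals $(f-\pt v,w_\cT)_\Omega$ by the strong form $\pt v - \nabla{\cdot}\bsg = f$. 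Both residuals then collapse to the stated right-hand sides after cancelling the common terms $\pt\bPi^{\bsg}_\cT(\uv)$ and $\pt\Pi^v_\cT(\uv)$.

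For the flux identity I would expand $(\bG_\cT(\h{\Pi}^{\sH}),\bxi_\cT)_\Omega$ via the first form of \eqref{def:G}, namely $\sum_{T\in\cT}\{-(\Pi^v_T(\uv),\dv\bxi_T)_T + (\Pi^k_{\dT}(v|_{\dT}),\bxi_T\SCAL\bn_T)_{\dT}\}$. Because $\dv\bxi_T\in\pbb^{k-1}(T;\rbb)$ and $\bxi_T\SCAL\bn_T|_F\in\pbb^k(F;\rbb)$, the orthogonality \eqref{H:proj.a} and the $L^2(F)$-orthogonality of $\Pi^k_F$ let me replace $\Pi^v_T(\uv)$ and $\Pi^k_{\dT}(v|_{\dT})$ by $v$ in the two integrals; a cellwise integration by parts (valid since $v|_T\in H^1(T)$) then gives $\sum_{T\in\cT}(\nabla v,\bxi_T)_T=(\nabla v,\bxi_\cT)_\Omega$, settling \eqref{h-error:1a}.

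The main effort, and the main obstacle, is the consistency identity for \eqref{h-error:1b}. I would start from the second form of \eqref{def:G}, giving $(\bPi^{\bsg}_\cT(\uv),\bG_\cT(\h{w}_\cM))_\Omega = \sum_{T\in\cT}\{(\nabla w_T,\bPi^{\bsg}_T(\uv))_T - (\delta_{\dT}(\h{w}_T),\bPi^{\bsg}_T(\uv)\SCAL\bn_T)_{\dT}\}$. The volume term is treated with \eqref{H:proj.d} taking $q=w_T\in\pbb^{k'}(T;\rbb)$, producing a boundary term in $(\bPi^k_{\dT}(\bsg|_{\dT})-\bsg)\SCAL\bn_T$. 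In the boundary term I replace $\delta_{\dT}(\h{w}_T)$ by $\mu:=\Pi^k_{\dT}(\delta_{\dT}(\h{w}_T))$ (legitimate since $\bPi^{\bsg}_T(\uv)\SCAL\bn_T\in\pbb^k(\cF_{\dT};\rbb)$), and for the stabilization I use that, by the reduction around \eqref{HHO:stab-re}, $S_{\dT}^{\HHO}$ depends on its argument only through $\Pi^k_{\dT}(\delta_{\dT}(\cdot))$, whence $S_{\dT}^{\HHO}(w_T,w_{\dT})=-S_{\dT}^{\HHO}(0,\mu)$. Equation \eqref{H:proj.c} then rewrites $s_\cM(\h{\Pi}^{\sH},\h{w}_\cM)$ as $\sum_{T\in\cT}((\bPi^{\bsg}_T(\uv)-\bsg)\SCAL\bn_T,\mu)_{\dT}$. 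The delicate bookkeeping is that the two $\bPi^{\bsg}_T(\uv)\SCAL\bn_T$ contributions then cancel exactly, and, using $\Pi^k_{\dT}(\bsg\SCAL\bn_T)=\bPi^k_{\dT}(\bsg|_{\dT})\SCAL\bn_T$ (since $\bn_T$ is facewise constant) together with $\mu = \Pi^k_{\dT}(w_T|_{\dT}) - w_{\dT}$, the remaining boundary terms collapse to $\sum_{T\in\cT}\{(\nabla w_T,\bsg)_T - (\bsg\SCAL\bn_T,w_T)_{\dT} + (\bPi^k_{\dT}(\bsg|_{\dT})\SCAL\bn_T,w_{\dT})_{\dT}\}$.

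To close, a cellwise integration by parts turns $(\nabla w_T,\bsg)_T - (\bsg\SCAL\bn_T,w_T)_{\dT}$ into $-(\nabla{\cdot}\bsg,w_T)_T$ (licit since $\bsg|_T\in\bH(\nabla{\cdot};T)$ and $\bsg\SCAL\bn_T\in L^2(\dT)$ thanks to $\nu_{\bsg}>\frac12$), while the interface sum $\sum_{T\in\cT}(\bPi^k_{\dT}(\bsg|_{\dT})\SCAL\bn_T,w_{\dT})_{\dT}$ vanishes: on interior faces because the normal flux of $\bsg$ is single-valued, i.e. $\jump{\bPi^k_F(\bsg|_F)\SCAL\bn_F}_F=0$, and on boundary faces because $w_F=0$ for $\h{w}_\cM\in\h{V}_{\cM0}^{k}$. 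This yields $-(\nabla{\cdot}\bsg,w_\cT)_\Omega$ and hence the identity; inserting the PDE and the weak form then produces the residuals in \eqref{h-error}. The two points most prone to error are the sign in $S_{\dT}^{\HHO}(w_T,w_{\dT})=-S_{\dT}^{\HHO}(0,\mu)$ and the exact cancellation of the $\bPi^{\bsg}_T(\uv)$-traces, which is precisely what renders the residual independent of the interpolant.
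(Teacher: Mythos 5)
Your proposal is correct and takes essentially the same approach as the paper: both proofs rest on the defining equations \eqref{H:proj.a}, \eqref{H:proj.c}, \eqref{H:proj.d} of the H-interpolation operator, the sign identity $S_{\dt}^{\HHO}(w_T,w_{\dt})=-S_{\dt}^{\HHO}(0,\Pi^k_{\dt}(w_T|_{\dt})-w_{\dt})$ following from \eqref{stab-id}, the single-valuedness of $\bsg\SCAL\bn$ across interfaces together with the boundary condition on $w_\cF$, cellwise integration by parts, and the model equations \eqref{model}. The only difference is organizational (you isolate the two consistency identities for the interpolant and then insert the PDE, whereas the paper simplifies the error equations directly, applying the integration by parts at the start rather than at the end), which is immaterial.
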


\begin{lemma}[Error equations using HHO interpolation] \label{lem:err_HHO}
The following holds: For all $t\in \ol{J}$ and all  $(\bxi_\cT,\h{w}_\cM)\in \bS_\cT^k\times\h{V}_{\cM 0}^{k}$,
\begin{subequations} 
\begin{align}
	&(\pt\bet_\cT^{\sPi}(t),\bxi_\cT)_{\Omega} - (\bG_\cT(\h{e}_{\cM}^{\sPi}(t)),\bxi_\cT)_{\Omega}=0,\label{hho-error:1a}\\
	&(\pt e_\cT^{\sPi}(t),w_\cT)_{\Omega} +(\bet_\cT^{\sPi}(t), \bG_\cT(\h{w}_{\cM}))_{\Omega} +s_{\cM}(\h{e}_{\cM}^{\sPi}(t),\h{w}_{\cM}) = \psi_{\cM}^{\sPi}(\ul{v}(t);\h{w}_{\cM}),\label{hho-error:1b}
\end{align}
\end{subequations}
where the linear form $\psi_{\cM}^{\sPi}(\ul{z};\cdot):\h{V}_{\cM 0}^k\to\rbb$ is defined, for all $\ul{z}:=(\bze,z)\in\ul{V}$ and $ \h{w}_\cM\in \h{V}_{\cM 0}^{k}$, as
\begin{align}
\psi_{\cM}^{\sPi}(\ul{z};\h{w}_\cM):=-(\nabla{\cdot}\bze,w_{\cT})_\Omega - (\bPi^k_{\cT}(\bze(t)),\bG_{\cT}(\h{w}_{\cM}))_\Omega - s_{\cM}((\Pi^{k'}_\cT(z),\Pi^k_\cF(z(t)|_\cF)),\h{w}_\cM).\label{cons-HHO}
\end{align}
\end{lemma}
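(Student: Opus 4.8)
The goal is to establish the error equations in Lemma~\ref{lem:err_HHO} using the HHO ($L^2$-orthogonal) interpolation, together with the explicit form of the consistency error $\psi_{\cM}^{\sPi}$. My plan is to mirror the strategy that presumably underlies Lemma~\ref{lem:6.1}: subtract the discrete equations \eqref{eq:HHO} from the (appropriately tested) continuous weak formulation \eqref{weak-form}, and then rewrite all continuous terms in terms of the projected quantities $\bPi^k_\cT(\bsg)$ and $\Pi^{k'}_\cT(v)$, collecting the leftover terms into the consistency form. The two equations are handled separately.

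For the flux equation \eqref{hho-error:1a}, I would start from \eqref{eq:HHO_flux} and insert $\bet_\cT^{\sPi}=\bsg_\cT-\bPi^k_\cT(\bsg)$. Because $\bG_\cT$ maps into $\bS_\cT^k$ and $\bxi_\cT\in\bS_\cT^k$, the term $(\partial_t\bPi^k_\cT(\bsg),\bxi_\cT)_\Omega=(\partial_t\bsg,\bxi_\cT)_\Omega$ by $L^2$-orthogonality, and the continuous relation \eqref{eq:model.1a} gives $(\partial_t\bsg,\bxi_\cT)_\Omega=(\nabla v,\bxi_\cT)_\Omega$. The key computation is then to show $(\nabla v,\bxi_\cT)_\Omega=(\bG_\cT(\Pi^{k'}_\cT(v),\Pi^k_\cF(v|_\cF)),\bxi_\cT)_\Omega$. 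This follows by testing the definition \eqref{def:G} of $\bG_\cT$ with the projected unknowns: cellwise integration by parts on $(\nabla v,\bxi_T)_T$ produces exactly $-(v,\dv\bxi_T)_T+(v,\bxi_T\SCAL\bn_T)_{\dT}$, and since $\dv\bxi_T\in\pbb^{k-1}(T;\rbb)\subset V_T^{k'}$ and $\bxi_T\SCAL\bn_T|_F\in\pbb^k(F;\rbb)$, the $L^2$-projections $\Pi^{k'}_T$ and $\Pi^k_F$ act as identities against these test functions, matching \eqref{def:G} exactly. This yields \eqref{hho-error:1a} with zero right-hand side, which is the clean structural feature distinguishing it from \eqref{h-error:1a}.

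For the primal equation \eqref{hho-error:1b}, I would subtract \eqref{eq:HHO_primal} from \eqref{weak-form} tested with $w=w_\cT$-type objects, writing $e_\cT^{\sPi}=v_\cT-\Pi^{k'}_\cT(v)$. The time-derivative term gives $(\partial_t v_\cT,w_\cT)_\Omega$ from the discrete side and, using $(\partial_t\Pi^{k'}_\cT(v),w_\cT)_\Omega=(\partial_t v,w_\cT)_\Omega$, the continuous equation \eqref{eq:model.1b} converts $(\partial_t v,w_\cT)_\Omega$ into $(f,w_\cT)_\Omega+(\nabla\SCAL\bsg,w_\cT)_\Omega$. The discrete source $(f,w_\cT)_\Omega$ cancels, leaving $-(\nabla\SCAL\bsg,w_\cT)_\Omega$, which is the first term of \eqref{cons-HHO}. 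The remaining discrete terms $(\bsg_\cT,\bG_\cT(\h{w}_\cM))_\Omega$ and $s_\cM(\h{v}_\cM,\h{w}_\cM)$, after substituting the projected interpolant, produce the remaining two terms of $\psi_{\cM}^{\sPi}$: the flux term becomes $-(\bPi^k_\cT(\bsg),\bG_\cT(\h{w}_\cM))_\Omega$ via $L^2$-orthogonality (since $\bG_\cT(\h{w}_\cM)\in\bS_\cT^k$), and the stabilization term becomes $-s_\cM((\Pi^{k'}_\cT(v),\Pi^k_\cF(v|_\cF)),\h{w}_\cM)$ simply by the definition of $\h{e}_\cM^{\sPi}$ and the bilinearity of $s_\cM$.

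The step I expect to require the most care is the flux identity $(\nabla v,\bxi_\cT)_\Omega=(\bG_\cT(\Pi^{k'}_\cT(v),\Pi^k_\cF(v|_\cF)),\bxi_\cT)_\Omega$, since it is the crux that forces the right-hand side of \eqref{hho-error:1a} to vanish; one must check that the degrees of the test functions $\dv\bxi_T$ and $\bxi_T\SCAL\bn_T$ are low enough to be invisible to the $L^2$-projections, which holds precisely because $\bxi_T\in\pbb^k(T;\rbb^d)$. In contrast to Lemma~\ref{lem:6.1}, no interpolation defining equations \eqref{H:proj} are invoked here; everything follows from $L^2$-orthogonality and the polynomial-degree bookkeeping, so the only genuine subtlety is tracking the signs and ensuring the boundary face contributions are correctly captured by $\Pi^k_\cF$. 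I would simply record the resulting consistency form \eqref{cons-HHO} as the collected nonzero residual, noting that $\ul{z}:=(\bsg,v)$ is evaluated at time $t$.
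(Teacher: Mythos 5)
Your proof is correct. Note, however, that the paper does not actually prove Lemma~\ref{lem:err_HHO} in-text: its entire proof is a pointer to \cite[Theorem~3]{BDES_2021}. Your direct verification is precisely the standard argument that reference contains: subtract the discrete equations from the exact ones, use that $\pt$ commutes with the $L^2$-orthogonal projections and that they are invisible against discrete test functions, and exploit the commuting identity $(\bG_\cT(\Pi^{k'}_\cT(v),\Pi^k_\cF(v|_\cF)),\bxi_\cT)_\Omega=(\nabla v,\bxi_\cT)_\Omega$, which holds exactly because $\dv\bxi_T\in\pbb^{k-1}(T;\rbb)$ and $\bxi_T\SCAL\bn_T|_F\in\pbb^k(F;\rbb)$. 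So your write-up supplies a self-contained proof where the paper only cites one, and all degree bookkeeping checks out.

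One point of wording deserves care: you say you ``subtract \eqref{eq:HHO_primal} from \eqref{weak-form} tested with $w=w_\cT$-type objects,'' but broken polynomials $w_\cT$ do not belong to $H^1_0(\Omega)$, so \eqref{weak-form} cannot literally be tested with them. What your computation actually uses is the strong form \eqref{eq:model.1b}, valid in $L^2(\Omega)$ for each $t$ since $\ul{v}\in C^0(\ol{J};\ul{V})\cap C^1(\ol{J};\ul{L})$ with $\ul{V}=\bH(\dv,\Omega)\times H^1_0(\Omega)$; this also ensures $(\nabla{\cdot}\bsg(t),w_\cT)_\Omega$ is well defined, which is exactly the regularity the consistency form \eqref{cons-HHO} requires of $\ul{z}\in\ul{V}$. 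With that phrasing fixed, the argument is complete.
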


\begin{corollary}[Combination of discrete HHO- and H-errors] \label{cor:comb_err}
Assume that  $\uv\in  C^1(\ol{J};\ul{D}^{\sH})$. The following holds: For all $t\in \ol{J}$ and all  $(\bxi_\cT,\h{w}_\cM)\in \bS_\cT^k\times\h{V}_{\cM 0}^{k}$,
\begin{subequations}\label{h:error-new}
\begin{align}
&(\pt\bet_\cT^{\sPi}(t),\bxi_\cT)_{\Omega} - (\bG_\cT(\h{e}_\cM^{\sH}(t)),\bxi_\cT)_{\Omega}=0,\label{h:error-new.a}\\
&(\pt e_\cT^{\sPi}(t),w_\cT)_{\Omega} +(\bet_\cT^{\sH}(t), \bG_\cT(\h{w}_\cM))_{\Omega} +s_{\cM}(\h{e}_\cM^{\sH}(t),\h{w}_\cM) = 0.\label{h:error-new.b}
\end{align}
\end{subequations}
\end{corollary}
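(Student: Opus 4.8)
The plan is to obtain \eqref{h:error-new} directly from Lemma~\ref{lem:6.1}. The point is that \eqref{h:error-new} and \eqref{h-error} differ only in the cell components of the two time-derivative terms (\(\bet_\cT^{\sH}\) is replaced by \(\bet_\cT^{\sPi}\), and \(e_\cT^{\sH}\) by \(e_\cT^{\sPi}\)), while the gradient-reconstruction and stabilization terms, which are built on the H-error \(\h{e}_\cM^{\sH}\), are identical in both systems. Hence I would simply subtract off the discrepancy between the two time-derivative terms and show that, combined with the right-hand sides of \eqref{h-error}, it produces quantities that vanish against the discrete test functions.

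First I would record the two identities relating H- and HHO-errors. Since every operator involved is linear and time-independent, the definitions of the errors give, for all \(t\in\ol{J}\),
\begin{align*}
\bet_\cT^{\sH}(t)-\bet_\cT^{\sPi}(t) &= \bPi^k_\cT(\bsg(t))-\bPi^{\bsg}_\cT(\ul{v}(t)),\\
e_\cT^{\sH}(t)-e_\cT^{\sPi}(t) &= \Pi^{k'}_\cT(v(t))-\Pi^v_\cT(\ul{v}(t)),
\end{align*}
where \(e_\cT^{\sH}\) (resp. \(e_\cT^{\sPi}\)) denotes the cell component of \(\h{e}_\cM^{\sH}\) (resp. \(\h{e}_\cM^{\sPi}\)).

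For \eqref{h:error-new.a}, I would start from \eqref{h-error:1a} and subtract \((\pt(\bet_\cT^{\sH}-\bet_\cT^{\sPi}),\bxi_\cT)_\Omega\) from both sides. Using the first identity, the right-hand side becomes
\[
(\pt(\bsg(t)-\bPi^{\bsg}_\cT(\ul{v}(t))),\bxi_\cT)_\Omega-(\pt(\bPi^k_\cT(\bsg(t))-\bPi^{\bsg}_\cT(\ul{v}(t))),\bxi_\cT)_\Omega=(\pt(\bsg(t)-\bPi^k_\cT(\bsg(t))),\bxi_\cT)_\Omega,
\]
the two terms carrying \(\bPi^{\bsg}_\cT(\ul{v}(t))\) cancelling. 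Because \(\bPi^k_\cT\) is a fixed linear operator it commutes with \(\pt\), so the surviving quantity equals \((\pt\bsg(t)-\bPi^k_\cT(\pt\bsg(t)),\bxi_\cT)_\Omega\), which vanishes for every \(\bxi_\cT\in\bS_\cT^k\) by the defining orthogonality of the \(L^2\)-projection \(\bPi^k_\cT\). This is \eqref{h:error-new.a}. The second identity \eqref{h:error-new.b} follows in exactly the same way from \eqref{h-error:1b}: the cell time-derivative \(e_\cT^{\sH}\) is replaced by \(e_\cT^{\sPi}\), the \(\Pi^v_\cT(\ul{v}(t))\) contributions cancel against the right-hand side, and what remains is \((\pt v(t)-\Pi^{k'}_\cT(\pt v(t)),w_\cT)_\Omega=0\) for all \(w_\cT\in V_\cT^{k'}\); the gradient and stabilization terms are untouched since they already coincide.

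The only delicate point---rather than a genuine obstacle---is the bookkeeping showing that the H-interpolation contributions on the two sides cancel exactly, leaving precisely the \(L^2\)-projection residuals \((\pt(\bsg-\bPi^k_\cT\bsg),\cdot)_\Omega\) and \((\pt(v-\Pi^{k'}_\cT v),\cdot)_\Omega\). Once this telescoping is recognized, the conclusion is immediate from the commutation of \(\pt\) with the time-independent \(L^2\)-orthogonal projections and their orthogonality against \(\bS_\cT^k\) and \(V_\cT^{k'}\), respectively.
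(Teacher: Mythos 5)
Your proof is correct and follows essentially the same route as the paper: both start from the error equations of Lemma~\ref{lem:6.1}, exploit that the H- and HHO-errors differ only by $\bPi^k_\cT(\bsg)-\bPi^{\bsg}_\cT(\ul{v})$ (resp. $\Pi^{k'}_\cT(v)-\Pi^v_\cT(\ul{v})$), and conclude via the $L^2$-orthogonality of $\bPi^k_\cT$ and $\Pi^{k'}_\cT$ combined with their commutation with $\pt$. The only difference is presentational: the paper shifts the right-hand sides of \eqref{h-error} to the left and recognizes the grouped terms as $(\pt\bet_\cT^{\sPi},\bxi_\cT)_\Omega$ and $(\pt e_\cT^{\sPi},w_\cT)_\Omega$, whereas you subtract the discrepancy term and show the residual projection error vanishes against the discrete test functions.
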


\begin{corollary}[Initial error on time-derivatives] \label{cor:init-dt}
The following holds:
\begin{align} 
\pt\bet_\cT^{\sPi}(0) = \bold{0},\quad	\pt e_\cT^{\sPi}(0) = 0.\label{IC:time-diff}
\end{align}
\end{corollary}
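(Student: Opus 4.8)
The plan is to combine the initial conditions \eqref{eq:IC} with the combined error equations of Corollary~\ref{cor:comb_err} evaluated at $t=0$. By definition of the H-errors and the prescriptions $\bsg_\cT(0)=\bPi^{\bsg}_\cT(\ul{v}(0))$ and $v_\cT(0)=\Pi^v_\cT(\ul{v}(0))$ in \eqref{eq:IC} (with $\ul{v}(0)=(\bsg_0,v_0)$), the dual H-error and the cell component of the primal H-error both vanish at $t=0$, i.e. $\bet_\cT^{\sH}(0)=\bzero$ and $e_\cT^{\sH}(0)=0$. The subtle point is that the face component $e_\cF^{\sH}(0):=v_\cF(0)-\Pi_\cF^k(v_0|_\cF)$ is \emph{not} prescribed by \eqref{eq:IC}, since the initial face values $v_\cF(0)$ are only determined implicitly; hence the heart of the argument is to show that $e_\cF^{\sH}(0)$ vanishes as well.

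To establish $\h{e}_\cM^{\sH}(0)=\bzero$, I would test \eqref{h:error-new.b} at $t=0$ with $\h{w}_\cM:=\h{e}_\cM^{\sH}(0)$. This choice is admissible: on every boundary face one has $v_F(0)=0$, while $v_0\in H^1_0(\Omega)$ gives $\Pi_F^k(v_0|_F)=0$, so the face component of $\h{e}_\cM^{\sH}(0)$ lies in $V_{\cF0}^k$ and $\h{e}_\cM^{\sH}(0)\in\h{V}_{\cM0}^{k}$. Because the cell component of $\h{e}_\cM^{\sH}(0)$ and $\bet_\cT^{\sH}(0)$ both vanish, the first two terms of \eqref{h:error-new.b} drop out, leaving $s_\cM(\h{e}_\cM^{\sH}(0),\h{e}_\cM^{\sH}(0))=0$. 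Since the cell component is zero, this stabilization energy equals $\sum_{T\in\cT}\tau_T\norm{S_{\dT}^{\HHO}(0,e_{\dT}^{\sH}(0))}_{\dT}^2$, a sum of nonnegative terms with $\tau_T>0$; hence each summand vanishes. The stability estimate $\norm{S_{\dT}^{\HHO}(0,\mu)}_{\dT}\gtrsim\norm{\mu}_{\dT}$ from Lemma~\ref{prop:stability} (an identity in the mixed-order case, where $S_{\dT}^{\rm{mo}}(0,\mu)=-\mu$) then forces $e_{\dT}^{\sH}(0)=0$ on the boundary of every cell, whence $e_\cF^{\sH}(0)=0$ and $\h{e}_\cM^{\sH}(0)=\bzero$.

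With $\h{e}_\cM^{\sH}(0)=\bzero$ in hand, both conclusions follow immediately. Substituting into \eqref{h:error-new.a} yields $(\pt\bet_\cT^{\sPi}(0),\bxi_\cT)_\Omega=(\bG_\cT(\bzero),\bxi_\cT)_\Omega=0$ for all $\bxi_\cT\in\bS_\cT^k$, and choosing $\bxi_\cT=\pt\bet_\cT^{\sPi}(0)\in\bS_\cT^k$ gives $\pt\bet_\cT^{\sPi}(0)=\bzero$. Similarly, substituting $\h{e}_\cM^{\sH}(0)=\bzero$ and $\bet_\cT^{\sH}(0)=\bzero$ into \eqref{h:error-new.b} reduces it to $(\pt e_\cT^{\sPi}(0),w_\cT)_\Omega=0$ for all $w_\cT\in V_\cT^{k'}$, and taking $w_\cT=\pt e_\cT^{\sPi}(0)$ gives $\pt e_\cT^{\sPi}(0)=0$. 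The only genuinely nontrivial step is the vanishing of the initial face error $e_\cF^{\sH}(0)$; this is precisely where the positive definiteness of the stabilization (and hence the way it enters the H-interpolation through \eqref{H:proj.c}) is exploited, in line with Remark~\ref{rem:stab}.
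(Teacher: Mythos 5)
Your proof is correct and follows essentially the same route as the paper's: both use the H-interpolation initialization to get $\bet_\cT^{\sH}(0)=\bzero$ and $e_\cT^{\sH}(0)=0$, deduce from the error equation at $t=0$ that the stabilization of $(0,e_\cF^{\sH}(0))$ vanishes, conclude $e_\cF^{\sH}(0)=0$ from the lower bound $\norm{S_{\dt}^{\HHO}(0,\mu)}_{\dt}\gtrsim\norm{\mu}_{\dt}$ invoked via Lemma~\ref{prop:stability} (exactly the justification the paper itself uses in its remark on initial face values), and then read off \eqref{IC:time-diff} by inserting the vanishing H-errors into \eqref{h:error-new.a}--\eqref{h:error-new.b}. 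The only differences are cosmetic: you test directly with $\h{e}_\cM^{\sH}(0)$ where the paper first states the orthogonality $s_{\cM}((0,e_\cF^{\sH}(0)),(0,w_\cF))=0$ for all $w_\cF\in V_{\cF 0}^k$, and you spell out the admissibility check that $e_\cF^{\sH}(0)\in V_{\cF 0}^k$ on boundary faces, which the paper leaves implicit.
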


\begin{remark}[Reconstructed error gradient]
The equations \eqref{hho-error:1a}  and \eqref{h:error-new.a} imply that $\bG_\cT(\h{e}_{\cM}^{\sPi}(t)) = \bG_\cT(\h{e}^{\sH}_{\cM}(t))$ for all $t\in \ol{J}$.
\end{remark}

\begin{remark}[Ritz projection]
Lemma~\ref{lem:6.1} proposes a way to construct a global Ritz projection $(\bPi^{\sR_{\bsg}}_{\cT},\h{\Pi}^{\sR_v}_{\cM})$ satisfying the following equations: Given $\uv:=(\bsg,v)\in \ul{V}$, find $(\bPi^{\sR_{\bsg}}_{\cT}(\uv),\h{\Pi}^{\sR_v}_{\cM}(\uv))\in \bS_\cT^k\times \h{V}_{\cM 0}^{k}$ such that, for all $(\bxi_\cT,\h{w}_\cM)\in \bS_\cT^k\times \h{V}_{\cM 0}^{k}$,
\begin{subequations}\label{eq:Ritz}
\begin{align}
(\bG_\cT(\h{\Pi}^{\sR_v}_{\cM}(\uv)),\bxi_\cT)_\Omega &= (\nabla v,\bxi_\cT)_\Omega\\
(\bPi^{\sR_{\bsg}}_{\cT}(\uv),\bG_\cT(\h{w}_{\cM}))_\Omega +s_{\cM}(\h{\Pi}^{\sR_v}_{\cM}(\uv)),\h{w}_{\cM}) &= -(\nabla{\cdot}\bsg,w_{\cT})_\Omega.
\end{align}
\end{subequations}
One can verify that $(\bPi_{\cT}^{\bsg}(\uv),\bPi_{\cT}^v(\uv),\Pi_{\cF}^k(v|_{\cF}))$ satisfies the above equations, which implies the existence of a Ritz projection. However, the uniqueness in \eqref{eq:Ritz} is not guaranteed. Indeed, we have  $\h{\Pi}^{\sR_v}_{\cM}(\ul{0}) =0,$ but $\bPi_{\cT}^{\sR_{\bsg}}(\ul{0})$ may not be zero. We only have  $\nabla{\cdot}\bPi_{\cT}^{\sR_{\bsg}}(\ul{0}) = 0$ and $\sum_{F\in\cF^i}\jump{\bPi_{\cT}^{\sR_{\bsg}}(\ul{0})}{\cdot}\bn_F=0$, implying that there may not be an unique solution to \eqref{eq:Ritz}. 
\end{remark}

\subsubsection{Error estimates}
Let $t\in \ti{J}:=\ol{J}\setminus\{0\}=(0,T_f]$ and set $J_t:=(0,t)$. We use the following shorthand notation:
%\begin{subequations}
\begin{alignat*}{2} |\psi|_{L^p(J_t;**)}^{{p}}&:=\int_{J_t}|\psi(s)|_{**}^p\,\ds\quad\text{for}\;p\in[1,\infty),\qquad &|\psi|_{C^0(J_t;**)}&:=\sup_{s\in \ol{J_t}}|\psi(s)|_{**},\\
\norm{\psi}_{L^p(J_t;**)}^{{p}}&:=\int_{J_t}\norm{\psi(s)}_{**}^p\,\ds\quad\text{for}\;p\in[1,\infty),\qquad &\norm{\psi}_{C^0(J_t;**)}&:=\sup_{s\in\ol{J_t}}\norm{\psi(s)}_{**},
\end{alignat*}
%\end{subequations}
where the seminorm  $|\cdot|_{**}$  and the norm  $\norm{\cdot}_{**}$ depend on the context.
For an integer number $m\in\{1,2\}$, we also consider the seminorm
$|\psi|_{C^m(J;**)}:=\sum_{r\in\{0{:}m\}}T_f^{r}|\partial_t^{r}\psi|_{C^0(J;**)}$
and the norm $\|\psi\|_{C^m(J;**)}:=\sum_{r\in\{0{:}m\}}T_f^{r}\|\partial_t^{r}\psi\|_{C^0(J;**)}$
with the convention $\partial_t^0\psi:=\psi$.

We denote the stabilization seminorm by $|\cdot|_{\sS}^2 := s_{\cM}(\cdot,\cdot)$. For a linear form $\psi:\h{V}^k_{\cM 0}\to\rbb$, we define the dual norm
%\begin{subequations}
\begin{align} 
\label{7.2}
\norm{\psi}_{(\HHO)'}:=\sup_{\h{w}_{\cM}\in\h{V}^{k}_{\cM 0}\setminus\{0\}}\frac{\psi(\h{w}_{\cM})}{\norm{\h{w}_{\cM}}_{\HHO}}\quad \text{with}\quad	\norm{\h{w}_{\cM}}_{\HHO}^2:=\sum_{T\in\cT}\Big(\norm{\nabla w_T}_{T}^2+h_T^{-1}\norm{\delta_{\dt}(\h{w}_T)}^2_{\dt}\Big),
\end{align} 
%\end{subequations}
and recalling the linear form $\psi_{\cM}^{\HHO}(\ul{z};\cdot):\h{V}^k_{\cM 0}\to\rbb$ with $\ul{z}\in\ul{V}$ defined in \eqref{cons-HHO}, we define for all $\ul{v}\in C^0(\ol{J};\ul{V})$ and all $p\in[1,\infty)$,
\begin{alignat*}{2}
\norm{\psi_{\cM}^{\HHO}(\ul{v};\cdot)}_{L^p(J_t;(\HHO)')}^{{p}}&:=\int_{J_t}\norm{\psi_{\cM}^{\HHO}(\ul{v}(s);\cdot)}_{(\HHO)'}^p\,\ds,\; \\
\norm{\psi_{\cM}^{\HHO}(\ul{v};\cdot)}_{C^0(J_t;(\HHO)')}&:=\sup_{s\in\ol{J_t}}\norm{\psi_{\cM}^{\HHO}(\ul{v}(s);\cdot)}_{(\HHO)'}.
\end{alignat*}

For dimensional consistency, we keep track of the dependency of the constants on $T_f$ and $\ell_\Omega$, but we hide the non-dimensional ratio $\ell_\Omega/T_f$ in the constants.

\begin{theorem}[Optimal energy-error estimate]\label{thm:energy-error}
Assume that $\underline{v}_0:=(\bsg_0,v_0) \in \underline{D}^{\sH}$. Let $\ul{v}:=(\bsg,v)\in C^0(\ol{J};\ul{V})\cap C^1(\ol{J};\ul{L})$  
solve the continuous problem \eqref{weak-form} and let $(\bsg_\cT,\h{v}_\cM)\in C^1(\ol{J};\bS_\cT^k\times\h{V}_{\cM 0}^{k})$ solve the discrete problem \eqref{eq:HHO} with the initial condition 
\begin{align}
\label{def:initc}
\bsg_\cT(0) = \bPi^{\bsg}_\cT(\uv_0),\quad  v_\cT(0) = \Pi^v_\cT(\uv_0)\quad \text{with}\quad \uv_0:=(\bsg_0,v_0).
\end{align}  
Assume that $\ul{v}\in C^1(\ol{J};\ul{V})$.
Then the following holds: For all $t\in \ti{J}$,
\begin{align}
&\norm{e_\cT^{\sPi}}_{C^0(J_t;\Omega)}+\norm{\bet^{\sPi}_\cT}_{C^0(J_t;\Omega)} + |\h{e}^{\sPi}_\cM|_{L^2(J_t;\sS)} \lesssim \norm{\bsg_0-\bPi^{\bsg}_\cT(\ul{v}_0))}_{\Omega}+\norm{v_0-\Pi^v_\cT(\ul{v}_0)}_{\Omega}\nonumber\\&\hspace{5cm}+\norm{\psi_{\cM}^{\sPi}(\uv;\cdot)}_{L^{\infty}(J_t;(\sPi)')}+\norm{\psi_{\cM}^{\sPi}(\pt \uv;\cdot)}_{L^{1}(J_t;(\sPi)')}.\label{energy-err.a}
\end{align}	
In addition, if $(\bsg,v)\in  C^2(\ol{J};\ul{V})$, we have for all $t\in \ti{J}$,
\begin{align}
\norm{\partial_te^{\sPi}_\cT}_{C^0(J_t;\Omega)}+\norm{\partial_t\bet^{\sPi}_\cT}_{C^0(J_t;\Omega)} + |\partial_t\h{e}^{\sPi}_\cM|_{L^2(J_t;\sS)}  &\lesssim \norm{\psi_{\cM}^{\sPi}(\pt \uv;\cdot)}_{L^{\infty}(J_t;(\sPi)')}\nonumber\\
&\quad+\norm{\psi_{\cM}^{\sPi}(\partial_{tt} \uv;\cdot)}_{L^{1}(J_t;(\sPi)')}.\label{energy-err.b}
\end{align}		
\end{theorem}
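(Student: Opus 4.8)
The plan is to establish the energy-error estimate \eqref{energy-err.a} by a standard energy argument applied to the combined error system of Corollary~\ref{cor:comb_err}, and then obtain \eqref{energy-err.b} by differentiating that system in time and applying the \emph{same} argument to the differentiated errors, using Corollary~\ref{cor:init-dt} to control the initial data. The central observation that makes this work is the structure of \eqref{h:error-new}: the time-derivative terms involve the HHO-errors $(\bet_\cT^{\sPi},e_\cT^{\sPi})$, while the spatial coupling terms (through $\bG_\cT$ and $s_\cM$) involve the H-errors $(\bet_\cT^{\sH},\h e_\cM^{\sH})$. Crucially, by the remark on the reconstructed error gradient we have $\bG_\cT(\h e_\cM^{\sH}(t)) = \bG_\cT(\h e_\cM^{\sPi}(t))$, and the stabilization term carries an $|\h e_\cM^{\sH}|_{\sS}^2$ contribution which we can relate back to the quantity we wish to bound.

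\textbf{Step 1 (the basic energy identity).} I would test \eqref{h:error-new.a} with $\bxi_\cT := \bet_\cT^{\sPi}(t)$ and \eqref{h:error-new.b} with $\h w_\cM := \h e_\cM^{\sH}(t)$, then add. Using $\bG_\cT(\h e_\cM^{\sH}) = \bG_\cT(\h e_\cM^{\sPi})$, the two gradient-coupling terms cancel (one being $-(\bG_\cT(\h e_\cM^{\sH}),\bet_\cT^{\sPi})_\Omega$ and the other $(\bet_\cT^{\sH},\bG_\cT(\h e_\cM^{\sH}))_\Omega$), \emph{provided} I can also replace $\bet_\cT^{\sH}$ by $\bet_\cT^{\sPi}$ in the second coupling term. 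Since $\bet_\cT^{\sH} - \bet_\cT^{\sPi} = \bPi_\cT^k(\bsg) - \bPi_\cT^{\bsg}(\uv)$, this difference is controlled by Remark~\ref{rem:5.5}/Theorem~\ref{prop:A1}, producing a consistency residual that I would absorb into the right-hand side. After the cancellation one is left with
\begin{equation*}
\tfrac12\tfrac{d}{dt}\big(\|\bet_\cT^{\sPi}\|_\Omega^2 + \|e_\cT^{\sPi}\|_\Omega^2\big) + |\h e_\cM^{\sH}|_{\sS}^2 = \text{(consistency terms)},
\end{equation*}
where the right-hand side is bounded using the dual norm $\|\cdot\|_{(\sPi)'}$ and the control of $\|\h e_\cM^{\sH}\|_{\HHO}$ by $\|\bet_\cT^{\sPi}\|_\Omega$ and $|\h e_\cM^{\sH}|_{\sS}$ (via a discrete inf-sup / coercivity property of $\bG_\cT$ and $s_\cM$, which the stabilization seminorm provides through Lemma~\ref{prop:stability}).

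\textbf{Step 2 (Grönwall and initial data).} Integrating over $J_t$ and taking the supremum in $t$, I would handle the source term $\psi_\cM^{\sPi}(\uv;\cdot)$ by integration by parts in time, trading a factor to convert the $L^\infty(J_t;(\sPi)')$ bound on $\psi_\cM^{\sPi}(\uv;\cdot)$ plus the $L^1(J_t;(\sPi)')$ bound on $\psi_\cM^{\sPi}(\pt\uv;\cdot)$ against the $C^0(J_t;\Omega)$ norms of the errors; the initial contribution of this integration-by-parts supplies the $\psi$-at-$t{=}0$ term, controlled again by the $L^\infty$ norm. The initial-data terms $\|\bsg_0 - \bPi_\cT^{\bsg}(\uv_0)\|_\Omega$ and $\|v_0 - \Pi_\cT^v(\uv_0)\|_\Omega$ enter directly from \eqref{def:initc}. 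A Grönwall argument (with the hidden ratio $\ell_\Omega/T_f$) then yields \eqref{energy-err.a}.

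\textbf{Step 3 (the differentiated system).} For \eqref{energy-err.b}, I would differentiate \eqref{h:error-new} in $t$ (legitimate under the regularity $\uv\in C^2(\ol J;\ul V)$) to obtain a system of identical structure governing $(\pt\bet_\cT^{\sPi}, \pt e_\cT^{\sPi}, \pt\h e_\cM^{\sH})$, with source $\psi_\cM^{\sPi}(\pt\uv;\cdot)$. Repeating Steps~1--2 verbatim gives \eqref{energy-err.b}, and here the initial data \emph{vanish} by Corollary~\ref{cor:init-dt}, which is precisely why no initial-data terms appear on the right-hand side of \eqref{energy-err.b}. \textbf{The main obstacle} I anticipate is the bookkeeping in Step~1: ensuring the gradient-coupling terms cancel exactly requires the $\bet_\cT^{\sH}$-versus-$\bet_\cT^{\sPi}$ replacement to be clean, and the resulting residual must be shown to be \emph{dominated} by the stated right-hand side rather than generating an uncontrolled term. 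Concretely, one must verify that the difference $\bet_\cT^{\sH}-\bet_\cT^{\sPi}$ coupled against $\bG_\cT(\h e_\cM^{\sH})$ can be reorganized into the $\psi_\cM^{\sPi}$ structure (using the defining relations \eqref{H:proj} of the H-interpolation operator), so that it does not need to be separately estimated but is already encoded in the consistency form; getting this algebra to close, together with tracking the correct dimensional powers of $\ell_\Omega$ and $T_f$, is the delicate part.
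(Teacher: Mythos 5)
There is a genuine gap, and it originates in your choice of error system. The paper runs the energy argument on the \emph{pure HHO-error equations} of Lemma~\ref{lem:err_HHO}, testing \eqref{hho-error:1a} with $\bxi_\cT:=\bet_\cT^{\sPi}(t)$ and \eqref{hho-error:1b} with $\h{w}_\cM:=\h{e}_\cM^{\sPi}(t)$: there the gradient terms cancel \emph{exactly} and the time-derivative terms are \emph{exact} total derivatives, yielding $\tfrac12\tfrac{d}{dt}\big(\norm{\bet_\cT^{\sPi}}^2_\Omega+\norm{e_\cT^{\sPi}}^2_\Omega\big)+|\h{e}_\cM^{\sPi}|^2_{\sS}=\psi_{\cM}^{\sPi}(\uv(t);\h{e}_\cM^{\sPi}(t))$, after which the right-hand side is handled as in \cite{BDES_2021} and the initial terms via \eqref{def:initc} and the Pythagoras identity. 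Your combined system \eqref{h:error-new} instead pairs HHO time derivatives with H spatial terms, so your diagonal test functions $(\bet_\cT^{\sPi},\h{e}_\cM^{\sH})$ do \emph{not} produce an energy identity. You address the gradient mismatch ($\bet_\cT^{\sH}$ vs.\ $\bet_\cT^{\sPi}$) but miss the analogous mismatch in the time-derivative term: testing \eqref{h:error-new.b} with $\h{e}_\cM^{\sH}$ yields $(\pt e_\cT^{\sPi}(t),e_\cT^{\sH}(t))_\Omega$, which is not $\tfrac12\tfrac{d}{dt}\norm{e_\cT^{\sPi}(t)}^2_\Omega$; the discrepancy $(\pt e_\cT^{\sPi}(t),\Pi^{k'}_\cT(v(t))-\Pi^v_\cT(\uv(t)))_\Omega$ involves $\pt e_\cT^{\sPi}$, which is not controlled by the quantities on the left at this stage. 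The same defect afflicts your proposed gradient fix: by \eqref{h:error-new.a} one has $\bG_\cT(\h{e}_\cM^{\sH})=\pt\bet_\cT^{\sPi}$, so the residual $(\bet_\cT^{\sH}-\bet_\cT^{\sPi},\bG_\cT(\h{e}_\cM^{\sH}))_\Omega$ again contains an uncontrolled time derivative.

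Even if you repaired these terms by integrating by parts in time, bounding $\bPi^k_\cT(\bsg)-\bPi^{\bsg}_\cT(\uv)$ and $\Pi^{k'}_\cT(v)-\Pi^v_\cT(\uv)$ via Theorem~\ref{prop:A1} would inject approximation-error terms into the right-hand side that are \emph{not} present in \eqref{energy-err.a}; moreover your identity controls $|\h{e}_\cM^{\sH}|_{\sS}$, not the stated $|\h{e}_\cM^{\sPi}|_{L^2(J_t;\sS)}$, and converting one into the other costs yet another such term. The clean way to make your algebra close is to observe that subtracting \eqref{h:error-new.b} from \eqref{hho-error:1b} gives $(\bet_\cT^{\sPi}-\bet_\cT^{\sH},\bG_\cT(\h{w}_\cM))_\Omega+s_{\cM}(\h{e}_\cM^{\sPi}-\h{e}_\cM^{\sH},\h{w}_\cM)=\psi_{\cM}^{\sPi}(\uv;\h{w}_\cM)$: performing \emph{all} the replacements (gradient \emph{and} stabilization) exactly reconstitutes $\psi_{\cM}^{\sPi}$ and takes you back to Lemma~\ref{lem:err_HHO}, i.e., to the paper's actual starting point. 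Corollary~\ref{cor:comb_err} is designed for the duality argument of Theorem~\ref{thm:tim-int-err}, not for the energy argument. Your Step~3 (differentiating in time and invoking Corollary~\ref{cor:init-dt} so that the initial terms vanish) does match the paper's treatment of \eqref{energy-err.b}, but it inherits the gap from Steps~1--2; also, no Gr\"onwall argument is needed on the paper's route.
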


\begin{remark}[Convergence rates]
Assuming $\ul{z}:=(\bze,z)\in \bH^{\nu_{\bsg}}(\Omega)\times H^1_0(\Omega)$ with $\nu_{\bsg}>\frac{1}{2}$, the consistency error $\psi_{\cM}^{\HHO}$ defined in \eqref{cons-HHO} can be further simplified as 
\begin{align*}
\psi_{\cM}^{\sPi}(\ul{z};\h{w}_\cM)=\sum_{T\in\cT}((\bze-\bPi_\cT^k(\bze)){\cdot}\bn_T,w_{\dt}-w_T)_{\dt} - s_{\cM}((\Pi^{k'}_\cT(z),\Pi^k_\cF(z|_\cF)),\h{w}_\cM),
\end{align*}
% The Cauchy--Schwarz inequality, the definition \eqref{7.2} of the dual norm, the definition of the HHO stabilization, \eqref{def:tau}, and Lemma~\ref{lem:3.4}, 
and one can show that
\[\norm{\psi_{\cM}^{\HHO}(\ul{z};\cdot)}_{(\HHO)'}\lesssim \Big\{\sum_{T\in\cT} h_T\norm{(\bze-\bPi_\cT^k(\bze)){\cdot}\bn_T}_{\dt}^2+\ell_\Omega\norm{\nabla(z-{\cal{E}}_T^{k+1}(z))}_T^2\Big\}^{\frac{1}{2}}.\]
Hence, if $\ul{v}:=(\bsg,v)\in  C^1(\ol{J}; \bH^{\nu_{\bsg}}(\Omega)\times H^{1+\nu_{v}}(\Omega))$ with $\nu_{\bsg},\nu_v\in(\frac{1}{2},k+1]$, we have
\begin{multline*}
\norm{e_\cT^{\sPi}}_{C^0(J_t;\Omega)}+\norm{\bet^{\sPi}_\cT}_{C^0(J_t;\Omega)} + |\h{e}^{\sPi}_\cM|_{L^2(J_t;\sS)}\lesssim 
%\Big\{
h^{\nu_{\bsg}}|\bsg|_{C^1(\ol{J};\bH^{\nu_{\bsg}}(\Omega))}+\ell_\Omega h^{\nu_v}|v|_{C^1(\ol{J};H^{1+\nu_{v}}(\Omega))}.%\Big\}.
\end{multline*}
Moreover, if $\ul{v}\in  C^2(\ol{J}; \bH^{\nu_{\bsg}}(\Omega)\times H^{1+\nu_{v}}(\Omega))$ with $\nu_{\bsg},\nu_v\in(\frac{1}{2},k+1]$, we have
\begin{multline*}
\norm{\partial_te^{\sPi}_\cT}_{C^0(J_t;\Omega)}\!+\!\norm{\partial_t\bet^{\sPi}_\cT}_{C^0(J_t;\Omega)}\! +\! |\partial_t\h{e}^{\sPi}_\cM|_{L^2(J_t;\sS)}\lesssim 
%\Big\{ 
h^{\nu_{\bsg}}|\bsg|_{C^2(\ol{J};\bH^{\nu_{\bsg}}(\Omega))}+\ell_\Omega h^{\nu_v}|v|_{C^2(\ol{J};H^{1+\nu_{v}}(\Omega))}.%\Big\}.
\end{multline*}
Whenever $\nu_{\bsg}=\nu_v=k+1$, the above estimates converge optimally with rate $\mathcal{O}(h^{k+1})$.
\end{remark}
Recall that $J_t:=(0,t)$ for all $t\in \ti{J}$ and set 
$\ut{e}^{\sPi}_\cT(t):=\int_{J_t} e_\cT^{\sPi}(s)\,\ds,\;
\forall t\in \ti{J}.$
 
\begin{theorem}[Superconvergent $L^2$-error estimate for time-integrated primal variable]\label{thm:tim-int-err}
Let $s\in(\frac{1}{2},1]$ be the index of elliptic regularity pickup in $\Omega$. Let $\delta := s$ if $k'=0$ and  $\delta :=0$ if $k'\ge1$. 
Assume that $\underline{v}_0\in \underline{D}^{\sH}$ and $\ul{v}\in C^1(\ol{J};\ul{V})\cap C^2(\ol{J};\ul{L})$. The following holds:
\begin{align}
\norm{\ut{e}_\cT^{\sPi}(T_f)}_\Omega \lesssim {}&  
h^s \ell_\Omega^{1-s} T_f^{\frac{1}{2}} \Big\{ \ell_\Omega^{-\frac12} \abs{\h{e}_{\cM}^{\sPi}}_{L^2(J;\sS)} + \ell_\Omega^{\frac12} \abs{\pt\h{e}_{\cM}^{\sPi}}_{L^2(J;\sS)} + \norm{\pt\bet_\cT^{\sPi}}_{L^2(J;\bL^2(\Omega))} \nonumber \\
&+ \ell_\Omega \norm{\nabla (\pt v - {\cal E}_\cT^{k+1} (\pt v))}_{L^2(J;\bL^2(\Omega))}
+ \norm{\pt\bsg-\bPi^k_{\cT}(\pt\bsg)}_{L^2(J;\bL^2(\Omega))} \nonumber \\
&+h^{1-\delta}\ell_\Omega^\delta \norm{\nabla{\cdot}\pt\bsg-\Pi_\cT^{k'}(\nabla{\cdot}\pt\bsg)}_{L^2(J;L^2(\Omega))} \Big\} + \ell_\Omega\norm{\Pi_\cT^{k'}(v_0)-\Pi^v_\cT(\ul{v}_0)}_{\Omega}.
\end{align}
\end{theorem}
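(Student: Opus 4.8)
The plan is to run an Aubin--Nitsche duality argument built on a static elliptic dual problem, fed by the combined error equations of Corollary~\ref{cor:comb_err}. Set $\theta:=\ut{e}_\cT^{\sPi}(T_f)\in V_\cT^{k'}$ and let $\Phi\in H^1_0(\Omega)$ solve $-\Delta\Phi=\theta$, with dual flux $\boldsymbol{\Phi}:=\nabla\Phi$ so that $-\nabla{\cdot}\boldsymbol{\Phi}=\theta$. The elliptic regularity pickup gives $\Phi\in H^{1+s}(\Omega)$ with $|\Phi|_{H^{1+s}(\Omega)}\lesssim \ell_\Omega^{1-s}\norm{\theta}_\Omega$; this, combined with the fact that $\boldsymbol{\Phi}\in\bH^{s}(\Omega)$ is approximated by its H-interpolant at rate $h^s$, is the source of the prefactor $h^s\ell_\Omega^{1-s}$. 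I would write $\ul{\Phi}:=(\boldsymbol{\Phi},\Phi)$, $\boldsymbol{\Psi}_\cT:=\bPi^{\bsg}_\cT(\ul{\Phi})$ and $\h{\Phi}_\cM:=(\Pi^v_\cT(\ul{\Phi}),\Pi^k_\cF(\Phi|_\cF))\in\h{V}_{\cM 0}^k$ (boundary face values vanish since $\Phi|_\Gamma=0$).

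First I would produce a discrete representation of $\norm{\theta}_\Omega^2$. Starting from $\norm{\theta}_\Omega^2=(\theta,-\nabla{\cdot}\boldsymbol{\Phi})_\Omega$, a cell-wise integration by parts, the telescoping of the single-valued face contributions (using the continuity of $\boldsymbol{\Phi}{\cdot}\bn$ and the vanishing of the time-integrated face error on $\cF^\partial$), and the commuting identity $\bG_\cT(\h{\Phi}_\cM)=\bPi^k_\cT(\boldsymbol{\Phi})$ (a consequence of \eqref{H:proj.a} and \eqref{def:G}) yield, using $\bG_\cT(\h{e}^{\sPi}_\cM)=\bG_\cT(\h{e}^{\sH}_\cM)$,
\begin{equation*}
\norm{\theta}_\Omega^2=\Big(\bG_\cT\big(\textstyle\int_0^{T_f}\h{e}_\cM^{\sH}\,\ds\big),\boldsymbol{\Phi}\Big)_\Omega-\sum_{T\in\cT}\Big(\delta_{\dT}\big(\textstyle\int_0^{T_f}\h{e}^{\sPi}_T\,\ds\big),(\boldsymbol{\Phi}-\bPi^k_T\boldsymbol{\Phi}){\cdot}\bn_T\Big)_{\dT}.
\end{equation*}
The second sum is handled directly: a trace/approximation bound on $\boldsymbol{\Phi}-\bPi^k_T\boldsymbol{\Phi}$, the scaling $\tau_T=\ell_\Omega h_T^{-1}$, a Cauchy--Schwarz in time (the time integral of $\h e^\sPi$ has stabilization seminorm $\le T_f^{1/2}\abs{\h{e}^{\sPi}_\cM}_{L^2(J;\sS)}$), and the regularity of $\Phi$ produce the term $\ell_\Omega^{-1/2}\abs{\h{e}^{\sPi}_\cM}_{L^2(J;\sS)}$.

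Next I would treat the first term. By the combined flux error equation \eqref{h:error-new.a}, $\pt\bet_\cT^{\sPi}=\bG_\cT(\h{e}^{\sH}_\cM)$ in $\bS_\cT^k$, hence $\bG_\cT(\int_0^{T_f}\h{e}^{\sH}_\cM\,\ds)=\int_0^{T_f}\pt\bet_\cT^{\sPi}\,\ds$. Splitting $\boldsymbol{\Phi}=\boldsymbol{\Psi}_\cT+(\boldsymbol{\Phi}-\boldsymbol{\Psi}_\cT)$, the $(\boldsymbol{\Phi}-\boldsymbol{\Psi}_\cT)$-contribution is bounded by $T_f^{1/2}\norm{\pt\bet_\cT^{\sPi}}_{L^2(J;\bL^2(\Omega))}\,\norm{\boldsymbol{\Phi}-\bPi^{\bsg}_\cT(\ul{\Phi})}_\Omega$, and Remark~\ref{rem:5.5} applied to $\ul{\Phi}$ (with $\nabla{\cdot}\boldsymbol{\Phi}=-\theta$) together with the regularity of $\Phi$ yields the $h^s\ell_\Omega^{1-s}T_f^{1/2}\norm{\pt\bet_\cT^{\sPi}}_{L^2(J;\bL^2(\Omega))}$ term; in the lowest-order case $k'=0$ one has $\nabla{\cdot}\boldsymbol{\Phi}=-\theta\notin H^1$, so Theorem~\ref{thm:relax-reg} must be used here, which is the origin of the exponent $\delta$. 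For the remaining piece $(\int_0^{T_f}\pt\bet_\cT^{\sPi}\,\ds,\boldsymbol{\Psi}_\cT)$ I would expand $\bG_\cT$ through \eqref{def:G} and invoke the defining equations \eqref{H:proj.b}--\eqref{H:proj.d} of $\boldsymbol{\Psi}_\cT$: the boundary normal trace is rewritten via \eqref{H:proj.c} as a stabilization pairing and its divergence is controlled via \eqref{H:proj.d}, so that after telescoping the single-valued face terms the leading parts cancel, leaving only a stabilization pairing of $\int_0^{T_f}\h{e}^{\sH}_\cM\,\ds$ against $\h{\Phi}_\cM$ and higher-order flux-projection remainders.

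Finally I would assemble and bound. The $\sH$-quantities are converted into the $\sPi$-quantities of the statement via $\h{e}^{\sH}_\cM-\h{e}^{\sPi}_\cM=(\Pi^{k'}_\cT v-\Pi^v_\cT(\ul{v}),0)$ and $\bet^{\sH}_\cT-\bet^{\sPi}_\cT=\bPi^k_\cT(\bsg)-\bPi^{\bsg}_\cT(\ul{v})$; differentiating in time and applying \eqref{est:pw}--\eqref{est:ps} (and Theorem~\ref{thm:relax-reg} for $k'=0$) to $\pt\ul{v}$ produces exactly $\norm{\pt\bsg-\bPi^k_\cT(\pt\bsg)}$, $\norm{\nabla{\cdot}\pt\bsg-\Pi^{k'}_\cT(\nabla{\cdot}\pt\bsg)}$ and $\norm{\nabla(\pt v-{\cal E}^{k+1}_\cT(\pt v))}$. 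The stabilization pairings give $\abs{\h{e}^{\sPi}_\cM}_{L^2(J;\sS)}$ and, after an integration by parts in time whose $t=0$ boundary contributions vanish by Corollary~\ref{cor:init-dt}, $\abs{\pt\h{e}^{\sPi}_\cM}_{L^2(J;\sS)}$; the mismatch $\Pi^{k'}_\cT(v_0)-\Pi^v_\cT(\ul{v}_0)$ enters through $\bet^{\sPi}_\cT(0)$ and $e^{\sPi}_\cT(0)$. A Cauchy--Schwarz in time supplies $T_f^{1/2}$, the regularity bound replaces $|\Phi|_{H^{1+s}}$ by $\ell_\Omega^{1-s}\norm{\theta}_\Omega$, and dividing by $\norm{\theta}_\Omega=\norm{\ut{e}^{\sPi}_\cT(T_f)}_\Omega$ concludes. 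The hard part will be the expansion of $(\int_0^{T_f}\pt\bet_\cT^{\sPi}\,\ds,\boldsymbol{\Psi}_\cT)$: one must exploit the H-interpolation's defining equations so that the non-superconvergent contributions cancel and only controllable combinations survive, which is precisely the property the H-interpolation was designed to deliver; the secondary difficulty is the case $k'=0$, where the lack of $H^1$-regularity of $\nabla{\cdot}\boldsymbol{\Phi}$ forces the negative-norm estimate of Theorem~\ref{thm:relax-reg} and explains the exponent $\delta$.
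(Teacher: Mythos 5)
Your key identity is correct, and your bounds for the two ``remainder'' pieces (the face term paired with $(\boldsymbol{\Phi}-\bPi^k_T\boldsymbol{\Phi}){\cdot}\bn_T$, and the volume term paired with $\boldsymbol{\Phi}-\bPi^{\bsg}_\cT(\ul{\Phi})$) go through. The gap sits exactly at the step you yourself flag as the hard part, and it is fatal: the piece $\big(\bG_\cT\big(\int_J\h{e}^{\sH}_\cM\,dt\big),\boldsymbol{\Psi}_\cT\big)_\Omega$ cannot be made small, because expanding it with the defining equations \eqref{H:proj.b}--\eqref{H:proj.d} of the H-interpolant makes the argument circular. Indeed, those manipulations (telescoping the single-valued face terms, rewriting the normal trace via \eqref{H:proj.c}, controlling the divergence via \eqref{H:proj.d}) are precisely the verification that $(\boldsymbol{\Psi}_\cT,\Pi^v_\cT(\ul{\Phi}),\Pi^k_\cF(\Phi|_\cF))$ satisfies the Ritz-type equations \eqref{eq:Ritz}; with $\nabla{\cdot}\boldsymbol{\Phi}=-\theta$ this reads, for every $\h{w}_\cM\in\h{V}^k_{\cM 0}$,
\begin{equation*}
(\boldsymbol{\Psi}_\cT,\bG_\cT(\h{w}_\cM))_\Omega + s_\cM\big((\Pi^v_\cT(\ul{\Phi}),\Pi^k_\cF(\Phi|_\cF)),\h{w}_\cM\big) = (\theta,w_\cT)_\Omega.
\end{equation*}
Taking $\h{w}_\cM:=\int_J\h{e}^{\sH}_\cM\,dt$ and using $\int_J e^{\sH}_\cT\,dt=\theta+\int_J(\Pi^{k'}_\cT(v)-\Pi^v_\cT(\uv))\,dt$, your remaining piece equals
\begin{equation*}
\norm{\theta}_\Omega^2+\Big(\theta,\int_J(\Pi^{k'}_\cT(v)-\Pi^v_\cT(\uv))\,dt\Big)_\Omega - s_\cM\Big((\Pi^v_\cT(\ul{\Phi}),\Pi^k_\cF(\Phi|_\cF)),\int_J\h{e}^{\sH}_\cM\,dt\Big),
\end{equation*}
so when you substitute back into your key identity the $\norm{\theta}_\Omega^2$ on both sides cancel and you are left with a true but vacuous relation $0=(\text{small terms})$. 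The ``cancellation of the non-superconvergent contributions'' you anticipate is cancellation against the very quantity you are trying to bound.

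The root cause is structural: a static elliptic dual inverts only the spatial operator, whereas the error equations \eqref{h:error-new} are evolution equations, so each application of them trades a spatial pairing for a time-derivative pairing. With a time-independent dual you can only either close the loop above (circularity) or stop with terms such as $(\bet^{\sPi}_\cT(T_f)-\bet^{\sPi}_\cT(0),\bPi^k_\cT\boldsymbol{\Phi})_\Omega$, which are of energy-error size $\mathcal{O}(h^{k+1})\norm{\theta}_\Omega$ and destroy the extra factor $h^s$. This is why the paper's proof uses the backward homogeneous wave equation $\partial_{tt}\Phith=\Delta\Phith$ with final data $\Phith(T_f)=0$, $\pt\Phith(T_f)=\theta$, together with its time integral $\tphith$ solving $\partial_{tt}\tphith-\Delta\tphith=-\theta$: there, the dangerous term $\int_J(\pt e^{\sPi}_\cT(t),\Phith(t))_\Omega\,dt$ appears with opposite signs in \eqref{6.20} and \eqref{eq:4.32} and cancels exactly, a cancellation available only because the dual solves the wave equation (so that $\Delta\tphith$ can be traded for $\partial_{tt}\tphith+\theta$ and integrated by parts in time against the time-differentiated error equation, using $\pt e^{\sPi}_\cT(0)=0$ and $\pt\bet^{\sPi}_\cT(0)=\boldsymbol{0}$ from Corollary~\ref{cor:init-dt}). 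Two further symptoms that the static route cannot reproduce the statement: the terms $\abs{\pt\h{e}^{\sPi}_\cM}_{L^2(J;\sS)}$ and $\norm{\pt\bsg-\bPi^k_\cT(\pt\bsg)}_{L^2(J;\bL^2(\Omega))}$ have no natural source in your argument (in the paper they arise from the time-differentiated error equation and from the consistency term $\psi^{\sPi}_\cM(\pt\uv;\cdot)$ tested with projections of $\tphith$), and the exponent $\delta$ does not come from negative-norm regularity of $\nabla{\cdot}\boldsymbol{\Phi}=-\theta$ but from the fact that for $k'=0$ the dual solution can only be approximated at rate $h$ by piecewise constants, i.e., $\norm{\Pi^{0}_\cT(\tphith)-\tphith}_\Omega\lesssim h\,|\tphith|_{H^{1}(\Omega)}$.
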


\begin{remark}[Convergence rates]
Assume $\ul{v}\in  C^2(\ol{J}; \bH^{\nu_{\bsg}}(\Omega)\times H^{1+\nu_{v}}(\Omega))$ with $\nu_{\bsg},\nu_v\in(\frac{1}{2},k+1]$. Let $r:=\max(\nu_{\bsg}-1,0)$ if $k'\ge1$ and $r:=\nu_{\bsg}-1+s\ge0$ if  $k'=0$. Assume that $\nabla{\cdot}\pt\bsg \in C^0(\ol{J};H^{r}(\Omega))$ (this follows from $\bsg \in C^1(\ol{J};\bH^{\nu_{\bsg}}(\Omega))$ whenever $k'\ge 1 $ and $\nu_{\bsg}\ge1$). Then we have
\begin{align*}
\norm{\ut{e}_\cT^{\sPi}(T_f)}_\Omega &\lesssim h^s\ell_\Omega^{-s}T_f 
\Big\{h^{\nu_{\bsg}}|\bsg|_{C^2(\ol{J};\bH^{\nu_{\bsg}}(\Omega))}+\ell_\Omega h^{\nu_v}| v|_{C^2(\ol{J};H^{1+\nu_v}(\Omega))}\\& + h^{1+r-\delta} \ell_\Omega^\delta  |\nabla{\cdot}\pt\bsg|_{C^0(\ol{J};H^r(\Omega))}\Big\}+h\big\{h^{\nu_{\bsg}}|\bsg_0|_{\bH^{\nu_{\bsg}}(\Omega)}+\ell_\Omega h^{\nu_v}| v_0|_{H^{1+\nu_v}(\Omega)}\},
\end{align*}
where the bound on the last term involving the initial condition follows from \eqref{est:pw}. Notice that $1+r-\delta \geq \nu_{\bsg}$ for $k'\geq 1$ and $1+r-\delta = \nu_{\bsg}$ for $k'=0$. Thus, in all cases for $k'$, the term involving $|\nabla{\cdot}\pt\bsg|_{C^0(\ol{J};H^r(\Omega))}$ converges at least at the same rate as the other two terms between braces.
Altogether, when $\nu_{\bsg}=\nu_v =k+1$ and $s=1$, the above estimate converges at rate $\mathcal{O}(h^{k+2})$.
\end{remark}

\begin{remark}[$k'=0$]
The modification in the analysis for $k'=0$ whereby some slight additional
regularity on $\nabla{\cdot}\pt\bsg$ is required, is inspired from the $L^2$-error analysis
of HHO methods in the elliptic case (see \cite[Lem.~2.11]{cicuttin2021hybrid}). This 
is the first time this idea is used in the time-dependent context.
\end{remark}

\begin{remark}[Time-integrated primal variable]
In the context of the second-order formulation in time of the wave equation, the time-integrated primal variable has a physical meaning. For instance, it can be intepreted as a displacement when the primal variable represents a velocity. In this case, Theorem~\ref{thm:tim-int-err} provides an improved $L^2$-error bound on the displacement. To evaluate this error, it can be more computationally effective, as in \cite{cockburn2014uniform}, to evolve in time only the cell mean-values of the discrete velocity and use the discrete gradient at final time.
\end{remark}

\section{Preliminary results}
\label{sec:aux}

In this section, we state and prove the auxiliary
results that we need to prove our main results. We gather them in three groups. The first is about general polynomials. The second is about  standard HHO results, and the third is about some novel results on the HHO stabilization.

In what follows, the inequality $a\leq Cb$ for positive numbers $a$ and $b$ is often abbreviated as $a\lesssim b$. The value of $C$ can be different at each occurence provided it is independent of the parameters $\ell_\Omega$, $T_f,$ and the mesh-size $h$; the value can depend on the mesh shape-regularity, the polynomial degree, and the space dimension. We also write  $a \approx b$ whenever $a\lesssim b\lesssim a$. 

\subsection{Two results on polynomials}
\begin{lemma}[Bound on vector-valued polynomials]\label{lem:3.5}
For all $T\in\cT$, fixing any  $d$ faces $\{F_i\}_{i=1,\dots,d}\subset \cF_{\dt}$, the following  holds: For all $\bq\in \pbb^k(T;\rbb^d)$,
\begin{align*}
\norm{\bq}_{T}\lesssim \norm{\bPi_T^{k-1}(\bq)}_T+\sum_{i\in \{1:d\}} h_T^{\frac{1}{2}} \norm{\bq{\cdot}\bn_{T}}_{F_i}, %\label{4.26}
\end{align*}
with the convention that $\pbb^{-1}(T;\rbb^d) = \{\bold{0}\}$.
\end{lemma}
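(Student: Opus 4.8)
The plan is to reduce the inequality to a scaling-invariant norm equivalence on the finite-dimensional space $\pbb^k(T;\rbb^d)$ and then to establish the latter by an injectivity argument. Since the right-hand side $N(\bq):=\norm{\bPi_T^{k-1}(\bq)}_T+\sum_{i}h_T^{\frac12}\norm{\bq\cdot\bn_T}_{F_i}$ is plainly a seminorm on $\pbb^k(T;\rbb^d)$, while $\norm{\cdot}_T$ is a norm on this space, the equivalence of all norms on a finite-dimensional space yields $\norm{\bq}_T\lesssim N(\bq)$ as soon as $N$ is positive definite, i.e. as soon as $N(\bq)=0$ forces $\bq=\bzero$. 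A homothety mapping $T$ to a simplex of unit diameter shows that the chosen weight $h_T^{\frac12}$ is exactly the one making the estimate invariant under this rescaling (normals and shape are preserved, while the volume and surface measures contribute the balancing powers of $h_T$, namely $\tfrac12+\tfrac{d-1}{2}-\tfrac d2=0$); hence, by shape-regularity, it suffices to prove positive definiteness with a constant that is uniform over the resulting compact family of reference-scale simplices.

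It thus remains to show that $\bPi_T^{k-1}(\bq)=\bzero$ together with $\bq\cdot\bn_T=0$ on each $F_i$, $i\in\{1{:}d\}$, implies $\bq=\bzero$. The first condition means exactly that $\bq$ is $L^2(T)$-orthogonal to $\pbb^{k-1}(T;\rbb^d)$. For each chosen face $F_i$, let $\lambda_i$ be the barycentric coordinate of $T$ that vanishes on $F_i$; its gradient $\nabla\lambda_i$ is a nonzero constant vector parallel to $\bn_T|_{F_i}$, so the face condition reads $p_i:=\bq\cdot\nabla\lambda_i=0$ on $F_i$. Since $p_i\in\pbb^k(T;\rbb)$ vanishes on the $(d-1)$-dimensional face $F_i$, which has nonempty relative interior in the hyperplane $\{\lambda_i=0\}$, it vanishes on that whole hyperplane; as $\lambda_i$ is affine and nonconstant, the factor theorem gives $p_i=\lambda_i r_i$ with $r_i\in\pbb^{k-1}(T;\rbb)$.

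I would then couple the two pieces of information. For any $s\in\pbb^{k-1}(T;\rbb)$ the vector field $s\,\nabla\lambda_i$ lies in $\pbb^{k-1}(T;\rbb^d)$, so orthogonality gives $\int_T p_i\,s=\int_T\bq\cdot(s\,\nabla\lambda_i)=0$; in particular $p_i\perp\pbb^{k-1}(T;\rbb)$. Taking $s=r_i$ and using $p_i=\lambda_i r_i$ yields $\int_T\lambda_i r_i^2=0$. As $\lambda_i\ge0$ on $T$ and $\lambda_i>0$ in its interior, the nonnegative integrand must vanish, whence $r_i=0$ and thus $\bq\cdot\nabla\lambda_i=p_i=0$. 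Since the $d$ faces correspond to $d$ distinct vertices, the gradients $\{\nabla\lambda_i\}_{i\in\{1{:}d\}}$ are linearly independent (the only relation among the $d+1$ barycentric gradients being $\sum_j\nabla\lambda_j=\bzero$) and hence form a basis of $\rbb^d$; as $\bq\cdot\nabla\lambda_i=0$ for every $i$, we conclude $\bq=\bzero$, which is the required positive definiteness. The one point deserving care is precisely this injectivity step — the extraction of the factor $\lambda_i$ and the sign argument $\int_T\lambda_i r_i^2=0\Rightarrow r_i=0$ — whereas the reduction to a finite-dimensional norm equivalence and the rescaling are routine.
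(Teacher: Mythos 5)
Your proof is correct. It is, however, a genuinely different contribution from what the paper does at this point: the paper does not prove the lemma at all, but simply invokes ``a scaling argument and \cite[Lemma~2.1]{du2019invitation}'', i.e.\ it outsources the core estimate to the HDG monograph of Du and Sayas. Your argument is a self-contained replacement: the homothety/shape-regularity reduction to a uniform norm equivalence on the finite-dimensional space $\pbb^k(T;\rbb^d)$ (your exponent count $\tfrac12+\tfrac{d-1}{2}-\tfrac d2=0$ is the right one, and the $L^2$-projection commutes with the rescaling), followed by the injectivity argument: writing the face condition as $p_i:=\bq\cdot\nabla\lambda_i=0$ on $F_i$, factoring $p_i=\lambda_i r_i$, using orthogonality to $\pbb^{k-1}(T;\rbb^d)$ with the test field $r_i\nabla\lambda_i$ to get $\int_T\lambda_i r_i^2=0$, hence $r_i=0$ by the sign of $\lambda_i$, and concluding from the linear independence of $\{\nabla\lambda_i\}_{i\in\{1:d\}}$ (any $d$ of the $d+1$ barycentric gradients form a basis). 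This is precisely the unisolvence mechanism that the cited external lemma encapsulates, and your write-up makes visible what the citation hides: why $d$ faces (and not fewer) are needed, and how the cell moments against $\pbb^{k-1}$ couple with the normal traces. The only step you treat a bit briskly is the uniformity of the constant over the shape-regular family of unit-diameter simplices; the standard contradiction/compactness argument (converging vertices, bounded polynomial coefficients, passage to the limit in both sides of the inequality) closes it, and it is indeed routine. So: correct, and more informative than the paper's one-line proof, at the price of length.
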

\begin{proof}
A scaling argument and \cite[Lemma~2.1]{du2019invitation}  provide the expected estimate.
\end{proof}

For all  $T\in\cT$, we define the space
%\begin{subequations}
\begin{align}
\pbb^{k'}_\perp(T;\rbb)&:=\{q\in \pbb^{k'}(T;\rbb):(q,r)_{T}=0\quad\forall r\in\pbb^{k-1}(T;\rbb)\},\label{4.8}
\end{align}
with the convention that $\pbb^{-1}(T;\rbb) = \{0\}$ so that the condition in the above definition is trivial for $k=0$.

\begin{lemma}[Bound on scalar-valued polynomials]\label{lem:A1}
For all $T\in\cT$, the following holds: For all $p\in\pbb_\perp^{k'}(T;\rbb)$,
\begin{subequations}
\begin{empheq}[left={\norm{p}_T\lesssim\empheqlbrace}]{alignat = 2}
& h_T^{\frac{1}{2}} \norm{p}_{\dt}\qquad&\text{for}\; &k'=k,\label{PO.a}\\
&h_T\norm{\nabla p}_T +h_T^{\frac{1}{2}} \norm{p}_{\dt}\qquad&\text{for}\;& k'=k+1.
\end{empheq}
\end{subequations}
\end{lemma}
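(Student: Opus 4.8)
The goal is to bound the $L^2(T)$-norm of a polynomial $p \in \pbb_\perp^{k'}(T;\rbb)$, i.e.\ a polynomial of degree at most $k'$ whose projection onto $\pbb^{k-1}(T;\rbb)$ vanishes, by quantities living on the boundary $\dt$ (in the equal-order case) with an extra gradient term (in the mixed-order case). My strategy is a scaling/equivalence-of-norms argument on the reference simplex combined with a careful use of the orthogonality constraint defining $\pbb_\perp^{k'}$. Let me sketch both cases.

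**Equal-order case ($k'=k$).** Here $p \in \pbb^k(T;\rbb)$ satisfies $(p,r)_T = 0$ for all $r \in \pbb^{k-1}(T;\rbb)$. The plan is to show that $\|\cdot\|_T$ and $h_T^{1/2}\|\cdot\|_{\dt}$ are equivalent norms on the finite-dimensional space $\pbb_\perp^{k}(T;\rbb)$, at least up to the scaling in $h_T$. First I would pass to the reference simplex $\widehat T$ via an affine map, so that it suffices to prove $\|\widehat p\|_{\widehat T} \lesssim \|\widehat p\|_{\partial \widehat T}$ for all $\widehat p \in \pbb_\perp^k(\widehat T;\rbb)$, the $h_T^{1/2}$ factors being produced by the Jacobian scalings of the volume and surface measures. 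On $\widehat T$ both sides are norms/seminorms on a fixed finite-dimensional space, so the inequality follows once I check that $\widehat p \mapsto \|\widehat p\|_{\partial \widehat T}$ is actually a \emph{norm} on $\pbb_\perp^k(\widehat T;\rbb)$, not merely a seminorm. This is the crux: I must argue that if $\widehat p \in \pbb_\perp^k$ vanishes on all faces of $\widehat T$, then $\widehat p \equiv 0$. A polynomial of degree $k$ vanishing on $\partial \widehat T$ is divisible by the degree-$(d+1)$ bubble $b = \prod_{i} \lambda_i$ (product of barycentric coordinates), hence has degree $\ge d+1$ unless it is zero; combined with the orthogonality constraint against $\pbb^{k-1}$, a short argument (testing against $p$ itself, or a dimension count) forces $\widehat p = 0$. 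Once this is established, compactness/finite-dimensionality gives the equivalence constant, and scaling back completes \eqref{PO.a}.

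**Mixed-order case ($k'=k+1$).** Now $p \in \pbb^{k+1}(T;\rbb)$ with $(p,r)_T=0$ for all $r\in\pbb^{k-1}$, and the boundary values alone no longer control $p$ (a nonzero $\pbb^{k+1}$ bubble vanishes on $\partial T$). The extra term $h_T\|\nabla p\|_T$ compensates for exactly this deficiency. The plan is again to scale to $\widehat T$ and show $\|\widehat p\|_{\widehat T} \lesssim \|\nabla \widehat p\|_{\widehat T} + \|\widehat p\|_{\partial \widehat T}$ on $\pbb_\perp^{k+1}(\widehat T;\rbb)$. The right-hand side is now a genuine norm on this space: if both $\nabla \widehat p = 0$ and $\widehat p|_{\partial \widehat T} = 0$, then $\widehat p$ is constant and vanishes on the boundary, so $\widehat p \equiv 0$ (the orthogonality constraint is not even needed for injectivity here, though it is harmless). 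By finite-dimensional norm equivalence the estimate holds with a constant depending only on $\widehat T$, and the affine pullback introduces the $h_T$ and $h_T^{1/2}$ weights on the gradient and boundary terms respectively.

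**Main obstacle.** The routine part is the scaling bookkeeping; the genuine content is verifying that the boundary (plus gradient) functional is a norm on the constrained space $\pbb_\perp^{k'}$, i.e.\ the injectivity/definiteness step. In the equal-order case this is where the orthogonality against $\pbb^{k-1}$ is essential and where the bubble-function divisibility argument must be made precise, so I expect that to be the delicate step. I would also double-check the degenerate case $k=0$ (where $\pbb_\perp^{k'}$ is unconstrained and the claim should reduce to a standard polynomial inverse/trace-type inequality), to make sure the convention $\pbb^{-1}(T;\rbb)=\{0\}$ is handled consistently.
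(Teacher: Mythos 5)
Your proof is correct, but it takes a genuinely different route from the paper's. The paper dispatches the equal-order case by citation to \cite[Lemma~A.1]{cockburn2010projection} and then derives the mixed-order case \emph{from} the equal-order one: it notes that $\Pi^k_T(p)\in\pbb^k_\perp(T;\rbb)$, applies \eqref{PO.a} to $\Pi^k_T(p)$, and controls the remainder via the triangle inequality, a discrete trace inequality, and $\norm{p-\Pi^k_T(p)}_T\le\norm{p-\Pi^0_T(p)}_T\lesssim h_T\norm{\nabla p}_T$ (Poincar\'e). You instead prove both cases from scratch by scaling to the reference simplex and finite-dimensional norm equivalence, with the injectivity step carried by the bubble-divisibility argument in the equal-order case. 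Your approach buys self-containedness, and your observation that the mixed-order bound needs no orthogonality at all is sharp: it is just a scaled Poincar\'e--Friedrichs inequality with boundary term, valid on all of $\pbb^{k+1}(T;\rbb)$ (indeed on $H^1(T)$). The paper's approach buys brevity and keeps the two cases logically linked. One imprecision to fix in your equal-order injectivity step: you cannot test the orthogonality constraint ``against $p$ itself,'' since $p\in\pbb^k\setminus\pbb^{k-1}$ in general. Write $\widehat p = b\,q$ with $b=\prod_{i=0}^{d}\lambda_i$ the degree-$(d+1)$ bubble and $q\in\pbb^{k-(d+1)}(\widehat T;\rbb)\subset\pbb^{k-1}(\widehat T;\rbb)$, and test against $q$: the constraint gives $\int_{\widehat T} b\,q^2 = 0$, and since $b>0$ in the interior of $\widehat T$, this forces $q=0$, hence $\widehat p=0$. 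With that correction (and the routine check that the affine map preserves the orthogonality constraint, which it does since $r\mapsto r\circ F$ is a bijection of $\pbb^{k-1}$ onto itself), your argument is complete, with constants depending only on shape regularity as the paper's conventions require.
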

\begin{proof}
For $k'=k$, the proof is given in \cite[Lemma~A.1]{cockburn2010projection}. Here, we prove the estimate for $k'=k+1$. Let $T\in\cT$ and $p\in\pbb_\perp^{k+1}(T;\rbb)$.
Notice  that $(\Pi^k_T(p),r)_T = (p,r)_T =0 $ for all $r\in\pbb^{k-1}(T;\rbb)$.  Invoking \eqref{PO.a} for $\Pi^k_T(p)$, we infer that $\norm{\Pi^k_T(p)}_T \lesssim h_T^{\frac{1}{2}} \norm{\Pi^k_T(p)}_{\dt}$. This, the  triangle inequality, and the discrete trace inequality lead to
\begin{align}
\norm{p}_T &\leq \norm{p-\Pi^k_T(p)}_T + \norm{\Pi^k_T(p)}_T\nonumber\\&\lesssim \norm{p-\Pi^k_T(p)}_T  +h_T^{\frac{1}{2}} \norm{\Pi^k_T(p)-p}_{\dt}+h_T^{\frac{1}{2}} \norm{p}_{\dt}\nonumber\\&
\lesssim \norm{p-\Pi^k_T(p)}_T+h_T^{\frac{1}{2}} \norm{p}_{\dt}.\nonumber
\end{align}
The observation that $ \norm{p-\Pi^k_T(p)}_T\leq \norm{p-\Pi^0_T(p)}_T\lesssim h_T\norm{\nabla p}_T$ from the $L^2$-othogonality of $\Pi^k_T$ and the Poincar\'e inequality conclude the proof.
\end{proof}

 \subsection{Basic HHO results}
\begin{lemma}[Stability]\label{prop:stability}
For all $\h{v}_T\in\h{V}_T^k$ and all $T\in\cT$, we have
\begin{align*}
\norm{\nabla v_T}_{T}^2+h_T^{-1}\norm{v_T-v_{\dt}}^2_{\dt}\approx\norm{\bG_{T}(\h{v}_T)}_{T}^2+h_T^{-1}\norm{S_{\dt}^{\HHO}(\h{v}_T)}^2_{\dt}.
\end{align*}
\end{lemma}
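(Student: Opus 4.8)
The plan is to prove the equivalence by establishing each of the two inequalities separately. Throughout, I would use the shorthand $a:=\norm{\nabla v_T}_T$, $b:=h_T^{-\frac12}\norm{\delta_{\dt}(\h{v}_T)}_{\dt}$, $G:=\norm{\bG_T(\h{v}_T)}_T$ and $S:=h_T^{-\frac12}\norm{S_{\dt}^{\HHO}(\h{v}_T)}_{\dt}$, so the claim reads $a^2+b^2\approx G^2+S^2$. The only tools needed are the discrete trace inequality $\norm{p|_{\dt}}_{\dt}\lesssim h_T^{-\frac12}\norm{p}_T$ for polynomials $p$, the $L^2$-contractivity of the projections $\Pi^k_{\dt}$ and $\Pi^k_T$, and the Poincar\'e inequality $\norm{(1-\Pi^0_T)p}_T\lesssim h_T\norm{\nabla p}_T$.

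For the boundedness direction $G^2+S^2\lesssim a^2+b^2$, I would take $\bxi_T=\bG_T(\h{v}_T)$ in the second form of \eqref{def:G} and apply Cauchy--Schwarz together with the discrete trace inequality applied to $\bG_T(\h{v}_T)\SCAL\bn_T$, which yields $G\lesssim a+b$. For $S$, the mixed-order bound $S\le b$ is immediate from the contractivity of $\Pi^k_{\dt}$. In the equal-order case I would bound the correction term by $\norm{(1-\Pi^k_T)R_T(\h{v}_T)|_{\dt}}_{\dt}\lesssim h_T^{\frac12}\norm{\nabla R_T(\h{v}_T)}_T$ (trace inequality followed by Poincar\'e), and then use $\norm{\nabla R_T(\h{v}_T)}_T\le\norm{\bG_T(\h{v}_T)}_T=G$, which holds because \eqref{def:reconst.a} identifies $\nabla R_T(\h{v}_T)$ with the $L^2(T)$-projection of $\bG_T(\h{v}_T)$ onto $\nabla\pbb^{k+1}(T;\rbb)\subset\bS_T^k$; this gives $S\lesssim a+b$.

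The stability direction $a^2+b^2\lesssim G^2+S^2$ is the crux, and the main obstacle is an apparent circularity: the naive estimates give only $a\lesssim G+b$ from \eqref{def:G} and $b\lesssim S+a$ from the stabilization, which do not close. The observation that breaks it is that $\nabla v_T\SCAL\bn_T$ is a polynomial of degree at most $k$ on each face (degree $k-1$ if $k'=k$, degree $k$ if $k'=k+1$). Hence, taking $\bxi_T=\nabla v_T$ in \eqref{def:G} I would write $a^2=(\bG_T(\h{v}_T),\nabla v_T)_T+(\Pi^k_{\dt}(\delta_{\dt}(\h{v}_T)),\nabla v_T\SCAL\bn_T)_{\dt}$, inserting $\Pi^k_{\dt}$ for free since its argument on the other slot already lies in $\pbb^k$. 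In the mixed-order case $\Pi^k_{\dt}(\delta_{\dt}(\h{v}_T))=S_{\dt}^{\rm{mo}}(\h{v}_T)$ exactly, so Cauchy--Schwarz and the trace inequality give $a\lesssim G+S$ with no residual $b$. In the equal-order case $\Pi^k_{\dt}(\delta_{\dt}(\h{v}_T))=S_{\dt}^{\rm{eo}}(\h{v}_T)-\Pi^k_{\dt}((1-\Pi^k_T)R_T(\h{v}_T)|_{\dt})$, and the extra contribution is at most $\norm{(1-\Pi^k_T)R_T(\h{v}_T)|_{\dt}}_{\dt}\norm{\nabla v_T\SCAL\bn_T}_{\dt}\lesssim\norm{\nabla R_T(\h{v}_T)}_T\,a\le G\,a$, so again $a\lesssim G+S$.

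With $a\lesssim G+S$ secured, it remains to bound $b$. In the equal-order case $\delta_{\dt}(\h{v}_T)\in\pbb^k(\cF_{\dt};\rbb)$, whence $\delta_{\dt}(\h{v}_T)=S_{\dt}^{\rm{eo}}(\h{v}_T)-\Pi^k_{\dt}((1-\Pi^k_T)R_T(\h{v}_T)|_{\dt})$, and the same reconstruction bound $h_T^{-\frac12}\norm{(1-\Pi^k_T)R_T(\h{v}_T)|_{\dt}}_{\dt}\lesssim\norm{\nabla R_T(\h{v}_T)}_T\le G$ gives $b\lesssim S+G$. In the mixed-order case I would split $\delta_{\dt}(\h{v}_T)=S_{\dt}^{\rm{mo}}(\h{v}_T)+(1-\Pi^k_{\dt})(v_T|_{\dt})$ and control the trace oscillation by $h_T^{-\frac12}\norm{(1-\Pi^k_{\dt})(v_T|_{\dt})}_{\dt}\lesssim\norm{\nabla v_T}_T=a$; the point is that this quantity annihilates constants, so $v_T$ may be replaced by $v_T-\Pi^0_T v_T$ before applying the contractivity, trace, and Poincar\'e inequalities in turn. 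Thus $b\lesssim S+a\lesssim G+S$. Collecting the two directions yields $a^2+b^2\approx G^2+S^2$, which is the asserted equivalence.
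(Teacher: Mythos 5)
Your proof is correct. Note that the paper does not actually spell out an argument for this lemma: it simply cites \cite[Lemma~4]{DE_2015,DEL_2014} and \cite[Lemma~2.6]{cicuttin2021hybrid}, and your proposal is essentially a faithful reconstruction of that standard HHO stability proof. All the key points are handled properly: the projection-insertion trick exploiting $\nabla v_T{\cdot}\bn_T\in\pbb^k(\cF_{\dt};\rbb)$ (which is exactly what breaks the apparent circularity), the identification of $\nabla R_T(\h{v}_T)$ as the $L^2(T)$-projection of $\bG_T(\h{v}_T)$ onto $\nabla\pbb^{k+1}(T;\rbb)$ giving $\norm{\nabla R_T(\h{v}_T)}_T\le\norm{\bG_T(\h{v}_T)}_T$, and the treatment of the mixed-order trace oscillation by subtracting $\Pi^0_T v_T$ before applying contractivity, the discrete trace inequality, and the Poincar\'e inequality.
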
 
\begin{proof}
See \cite[Lemma~4]{DE_2015, DEL_2014}  or \cite[Lemma~2.6]{cicuttin2021hybrid}.
\end{proof}

\begin{lemma}[Bound on the stabilization term using $\Pi^{k'}_T$]\label{lem:3.4}
For all $v\in H^{1}(T)$ and all $T\in\cT$,  we have
\begin{align}
h_T^{-\frac{1}{2}}\norm{S_{\dt}^{\HHO}(\Pi^{k'}_T(v), \Pi^k_{\dt}(v|_{\dt}))}_{\dt}\lesssim \norm{\nabla(v-{\cal{E}}_T^{k+1}(v))}_T.\label{S1}
\end{align}
Consequently, we have, for all $p\in \pbb^{k'}(T;\rbb)$,
\begin{align}
S_{\dt}^{\HHO}(p,\Pi^k_{\dt}(p|_{\dt})) = 0.\label{stab-id}
\end{align}
\end{lemma}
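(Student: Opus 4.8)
The plan is to reduce both claims to a single algebraic identity linking the potential reconstruction $R_T$ to the elliptic projection ${\cal{E}}_T^{k+1}$, and then to bound the resulting residual trace with standard scaled trace and Poincaré inequalities. The key identity I would establish first is that, for every $v\in H^1(T)$,
\[
R_T\big(\Pi^{k'}_T(v),\Pi^k_{\dt}(v|_{\dt})\big) = {\cal{E}}_T^{k+1}(v).
\]
To prove it, fix $w\in\pbb^{k+1}_*(T;\rbb)$ and integrate by parts the right-hand side of \eqref{def:reconst.a} with $v_T:=\Pi^{k'}_T(v)$ and $v_{\dt}:=\Pi^k_{\dt}(v|_{\dt})$; the boundary contributions of $v_T$ cancel, leaving $-(\Pi^{k'}_T(v),\Delta w)_T + (\Pi^k_{\dt}(v|_{\dt}),\nabla w\SCAL\bn_T)_{\dt}$. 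Since $\Delta w\in\pbb^{k-1}(T;\rbb)\subseteq\pbb^{k'}(T;\rbb)$ and $\nabla w\SCAL\bn_T|_F\in\pbb^{k}(F;\rbb)$, the orthogonality of $\Pi^{k'}_T$ and $\Pi^k_{\dt}$ lets me drop both projections, and integrating by parts backwards gives $(\nabla v,\nabla w)_T=(\nabla{\cal{E}}_T^{k+1}(v),\nabla w)_T$. Thus the two reconstructions share the same gradient; as \eqref{def:reconst.b} and $({\cal{E}}_T^{k+1}(v),1)_T=(v,1)_T$ fix the same mean, they coincide. A by-product I will reuse is that $v-{\cal{E}}_T^{k+1}(v)$ has zero mean on $T$, hence $\|v-{\cal{E}}_T^{k+1}(v)\|_T\lesssim h_T\|\nabla(v-{\cal{E}}_T^{k+1}(v))\|_T$ by Poincaré.

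Writing $E:={\cal{E}}_T^{k+1}(v)$ and inserting this identity into the stabilization definitions, I would then extract an explicit residual in each case. In the equal-order case, substituting $R_T(\h{v}_T)=E$ into \eqref{HHO-stab:1a} and using $\Pi^k_{\dt}\circ\Pi^k_{\dt}=\Pi^k_{\dt}$ collapses the expression to
\[
S_{\dt}^{\rm{eo}}\big(\Pi^k_T(v),\Pi^k_{\dt}(v|_{\dt})\big) = -\,\Pi^k_{\dt}\big((1-\Pi^k_T)(v-E)|_{\dt}\big),
\]
while \eqref{HHO-stab:1b} gives directly $S_{\dt}^{\rm{mo}}\big(\Pi^{k+1}_T(v),\Pi^k_{\dt}(v|_{\dt})\big)=\Pi^k_{\dt}\big((\Pi^{k+1}_T(v)-v)|_{\dt}\big)$, which I split as $\Pi^k_{\dt}\big((\Pi^{k+1}_T(v)-E)|_{\dt}\big)+\Pi^k_{\dt}\big((E-v)|_{\dt}\big)$.

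For the estimate \eqref{S1} I would use that $\Pi^k_{\dt}$ is an $L^2(\dt)$-contraction. In the equal-order case the residual is bounded by $h_T^{-\frac12}\|(1-\Pi^k_T)(v-E)\|_{\dt}$; the multiplicative trace inequality, the Poincaré bound above (applied to $(1-\Pi^k_T)(v-E)$), and the $H^1$-stability of $\Pi^k_T$ on shape-regular simplices yield $h_T^{-\frac12}\|(1-\Pi^k_T)(v-E)\|_{\dt}\lesssim\|\nabla(v-E)\|_T$. In the mixed-order case the term $\Pi^k_{\dt}((E-v)|_{\dt})$ is handled identically, and for $\Pi^k_{\dt}((\Pi^{k+1}_T(v)-E)|_{\dt})$ I apply the discrete trace inequality to the polynomial $\Pi^{k+1}_T(v)-E=\Pi^{k+1}_T(v-E)$, obtaining $h_T^{-\frac12}\|(\Pi^{k+1}_T(v)-E)|_{\dt}\|_{\dt}\lesssim h_T^{-1}\|\Pi^{k+1}_T(v-E)\|_T\lesssim h_T^{-1}\|v-E\|_T\lesssim\|\nabla(v-E)\|_T$, again by the zero-mean Poincaré bound. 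This establishes \eqref{S1} in both cases. Finally, \eqref{stab-id} is immediate: for $p\in\pbb^{k'}(T;\rbb)$ one has $\Pi^{k'}_T(p)=p$ and ${\cal{E}}_T^{k+1}(p)=p$ (the elliptic projection reproduces polynomials of degree $\le k+1$), so the right-hand side of \eqref{S1} vanishes and forces $S_{\dt}^{\HHO}(p,\Pi^k_{\dt}(p|_{\dt}))=0$.

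The scaled-trace manipulations of the last step are routine. The main obstacle is the reconstruction identity: one must track carefully which volume and face projections can be removed — exploiting $\Delta w\in\pbb^{k-1}(T;\rbb)$ and $\nabla w\SCAL\bn_T|_F\in\pbb^{k}(F;\rbb)$ — so that the single computation covers both $k'=k$ and $k'=k+1$, and so that the zero-mean property needed for the Poincaré inequalities is secured.
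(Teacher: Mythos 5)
Your proof is correct, and it splits naturally into a part that coincides with the paper and a part that goes beyond it. In the mixed-order case your argument is essentially the paper's own: both start from $\norm{S_{\dt}^{\rm{mo}}(\Pi^{k+1}_T(v),\Pi^k_{\dt}(v|_{\dt}))}_{\dt}\le\norm{\Pi^{k+1}_T(v)-v}_{\dt}$ by $L^2$-stability of $\Pi^k_{\dt}$, then split via ${\cal{E}}_T^{k+1}(v)$, treating the polynomial part $\Pi^{k+1}_T(v)-{\cal{E}}_T^{k+1}(v)=\Pi^{k+1}_T(v-{\cal{E}}_T^{k+1}(v))$ by a discrete trace inequality and $L^2$-stability, and the remainder by the multiplicative trace inequality together with the zero-mean Poincar\'e bound $\norm{v-{\cal{E}}_T^{k+1}(v)}_T\lesssim h_T\norm{\nabla(v-{\cal{E}}_T^{k+1}(v))}_T$. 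The genuine difference is the equal-order case: the paper does not prove it, delegating to \cite[Lemma~3]{DEL_2014} and \cite[Lemma~2.7]{cicuttin2021hybrid}, whereas you supply a self-contained proof through the identity $R_T(\Pi^k_T(v),\Pi^k_{\dt}(v|_{\dt}))={\cal{E}}_T^{k+1}(v)$ and the resulting residual formula $S_{\dt}^{\rm{eo}}(\Pi^k_T(v),\Pi^k_{\dt}(v|_{\dt}))=-\Pi^k_{\dt}\big(((1-\Pi^k_T)(v-{\cal{E}}_T^{k+1}(v)))|_{\dt}\big)$, which you then bound by trace, Poincar\'e, and $H^1$-stability (or, equivalently, an inverse) estimates. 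This is in fact the classical argument underlying the cited HHO results, and a variant of your reconstruction identity --- with $\Pi^v_T$ in place of $\Pi^{k'}_T$ --- is established by the very same integration-by-parts computation inside the paper's proof of Lemma~\ref{lem4.4}; so your route buys self-containedness at the cost of some length, while remaining fully consistent with the paper's toolbox. Two small remarks: your framing claims both cases reduce to the reconstruction identity, but your mixed-order proof never actually uses it (correctly so, since $S_{\dt}^{\rm{mo}}$ does not involve $R_T$); and your explicit derivation of \eqref{stab-id} from polynomial reproduction ($\Pi^{k'}_T(p)=p$ and ${\cal{E}}_T^{k+1}(p)=p$ for $p\in\pbb^{k'}(T;\rbb)$) makes precise a deduction the paper leaves implicit.
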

\begin{proof}
For the equal-order case, see \cite[Lemma~3]{DEL_2014} or \cite[Lemma~2.7]{cicuttin2021hybrid}. Let us consider the mixed-order case. It is proved in \cite[Lemma~3.4]{cicuttin2021hybrid} that $h_T^{-\frac{1}{2}}\norm{S_{\dt}^{\rm{mo}}(\Pi^{k+1}_T(v), \Pi^k_{\dt}(v|_{\dt}))}_{\dt}\lesssim \norm{\nabla(v-\Pi_T^{k+1}(v))}_T$. Here, we establish the slightly stronger estimate \eqref{S1}. The definition of the stabilization operator \eqref{HHO-stab:1b} and the $L^2$-stability of $\Pi^k_{\dt}$ lead to 
\begin{align}
\norm{S_{\dt}^{\rm{mo}}(\Pi^{k+1}_T(v), \Pi^k_{\dt}(v|_{\dt}))}_{\dt}  =\norm{\Pi^k_{\dt}((\Pi^{k+1}_T(v)- v)|_{\dt})}_{\dt}\leq \norm{\Pi^{k+1}_T(v)- v}_{\dt}.\label{3.15}
\end{align}
The triangle inequality, the discrete trace inequality, and the multiplicative trace inequality show that
\begin{align}
\norm{\Pi^{k+1}_T(v)- v}_{\dt}  &\leq \norm{\Pi^{k+1}_T(v)- {\cal{E}}_T^{k+1}(v)}_{\dt} + \norm{{\cal{E}}_T^{k+1}(v)-v}_{\dt}\nonumber\\
&\lesssim h_T^{-\frac{1}{2}}\norm{\Pi^{k+1}_T(v)- {\cal{E}}_T^{k+1}(v)}_T + h_T^{-\frac{1}{2}}\norm{{\cal{E}}_T^{k+1}(v)-v}_{T}+h_T^{\frac{1}{2}}\norm{\nabla({\cal{E}}_T^{k+1}(v)-v)}_{T}.\nonumber
\end{align}
The observation $\norm{\Pi^{k+1}_T(v)-{\cal{E}}_T^{k+1}(v)}_{T}\leq \norm{v-{\cal{E}}_T^{k+1}(v)}_T$ from the Pythagoras identity, and  the Poincar\'e inequality result in
\begin{align}
\norm{\Pi^{k+1}_T(v)- v}_{\dt} &\lesssim h_T^{-\frac{1}{2}}\norm{{\cal{E}}_T^{k+1}(v)-v}_{T}+h_T^{\frac{1}{2}}\norm{\nabla({\cal{E}}_T^{k+1}(v)-v)}_{T}\nonumber\\&\lesssim h_T^{\frac{1}{2}}\norm{\nabla({\cal{E}}_T^{k+1}(v)-v)}_{T}.\label{3.16}
\end{align}
The combination of \eqref{3.15}-\eqref{3.16} concludes the proof.
\end{proof}

\subsection{Novel results on HHO stabilization}

\begin{lemma}[Bound on the stabilization term using $\Pi^v_T$]\label{lem4.4}
For all $T\in\cT$ and all $\uv:=(\bsg,v)\in \bH^{\nu_{\bsg}}(T)\times H^{1}(T)$ with $\nu_{\bsg}>\frac{1}{2}$, the following estimate holds:
\begin{align}
h_T^{-\frac{1}{2}}\norm{S_{\dt}^{\HHO}(\pw_T(\uv),\Pi^k_{\dt}(v|_{\dt}))}_{\dt} \lesssim  h_T^{-1}\norm{\pw_T(\uv) - \Pi^{k'}_T(v)}_{T} + \norm{\nabla(v-{\cal{E}}^{k+1}_T(v))}_T.\label{stab-bound}
\end{align}
\end{lemma}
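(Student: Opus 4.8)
The plan is to exploit the linearity of the stabilization operator $S_{\dt}^{\HHO}$ in its pair argument, combined with the two basic HHO results already available. The key step is to insert the $L^2$-projection $\Pi^{k'}_T(v)$ into the cell slot while leaving the face slot unchanged, writing
\[
S_{\dt}^{\HHO}(\pw_T(\uv),\Pi^k_{\dt}(v|_{\dt})) = S_{\dt}^{\HHO}(\pw_T(\uv)-\Pi^{k'}_T(v),0) + S_{\dt}^{\HHO}(\Pi^{k'}_T(v),\Pi^k_{\dt}(v|_{\dt})).
\]
The triangle inequality then reduces the bound to estimating each term separately.

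For the second term, the estimate \eqref{S1} of Lemma~\ref{lem:3.4} applies verbatim, giving $h_T^{-\frac12}\norm{S_{\dt}^{\HHO}(\Pi^{k'}_T(v),\Pi^k_{\dt}(v|_{\dt}))}_{\dt}\lesssim \norm{\nabla(v-{\cal{E}}_T^{k+1}(v))}_T$, which is exactly the second term on the right-hand side of \eqref{stab-bound}. For the first term, I set $w_T:=\pw_T(\uv)-\Pi^{k'}_T(v)\in\pbb^{k'}(T;\rbb)$ and apply the stability equivalence of Lemma~\ref{prop:stability} to the pair $\h{v}_T:=(w_T,0)$. Since the face component vanishes, this yields $h_T^{-1}\norm{S_{\dt}^{\HHO}(w_T,0)}_{\dt}^2 \lesssim \norm{\nabla w_T}_T^2 + h_T^{-1}\norm{w_T}_{\dt}^2$. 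An inverse inequality bounds $\norm{\nabla w_T}_T\lesssim h_T^{-1}\norm{w_T}_T$ and a discrete trace inequality bounds $h_T^{-\frac12}\norm{w_T}_{\dt}\lesssim h_T^{-1}\norm{w_T}_T$, both exploiting that $w_T$ is a polynomial. Combining these, $h_T^{-\frac12}\norm{S_{\dt}^{\HHO}(w_T,0)}_{\dt}\lesssim h_T^{-1}\norm{\pw_T(\uv)-\Pi^{k'}_T(v)}_T$, which is the first term on the right-hand side.

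I expect no genuine obstacle here; the proof is short once the linear splitting is selected. The one point requiring care is that the argument must cover both the equal- and mixed-order definitions of $S_{\dt}^{\HHO}$, but this is automatic since the decomposition uses only linearity and since both Lemma~\ref{lem:3.4} and Lemma~\ref{prop:stability} are stated uniformly for $S_{\dt}^{\HHO}$. A secondary subtlety is to keep the two slots of $S_{\dt}^{\HHO}$ aligned correctly in the splitting, so that the residual term carries a zero face component and the stability estimate of Lemma~\ref{prop:stability} applies directly with $v_{\dt}=0$.
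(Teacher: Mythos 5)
Your proof is correct, and it takes a genuinely different route from the paper's. The paper proves \eqref{stab-bound} by treating the equal- and mixed-order cases separately and expanding the definition of the stabilization. In the equal-order case it writes $S_{\dt}^{\rm{eo}}(\pw_T(\uv),\Pi^k_{\dt}(v|_{\dt})) = \Pi^k_{\dt}\{(\pw_T(\uv)-\Pi^k_T(v))|_{\dt}+(\Pi^k_T(\eta)-\eta)|_{\dt}\}$ with $\eta := v - R_T(\pw_T(\uv),\Pi^k_{\dt}(v|_{\dt}))$, and then establishes the identity $\nabla R_T(\pw_T(\uv),\Pi^k_{\dt}(v|_{\dt})) = \nabla {\cal{E}}_T^{k+1}(v)$; this identity hinges on the moment condition \eqref{H:1a} defining $\pw_T(\uv)$ (tested against $\Delta q\in\pbb^{k-1}(T;\rbb)$ for $q\in\pbb^{k+1}_*(T;\rbb)$) together with the $L^2$-orthogonality of $\Pi^k_{\dt}$. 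Your argument, by contrast, never uses any property of $\pw_T(\uv)$ beyond its membership in $\pbb^{k'}(T;\rbb)$: the linear splitting off of the pair $(\Pi^{k'}_T(v),\Pi^k_{\dt}(v|_{\dt}))$ reduces everything to the already-available estimate \eqref{S1} of Lemma~\ref{lem:3.4} and to the stability equivalence of Lemma~\ref{prop:stability} applied to $(\pw_T(\uv)-\Pi^{k'}_T(v),0)$, followed by standard inverse and discrete trace inequalities. This buys you two things: a single unified argument covering $k'=k$ and $k'=k+1$ (the paper handles them in two distinct steps, the equal-order one being the delicate one), and a slightly more general statement, since your bound holds with $\pw_T(\uv)$ replaced by an arbitrary polynomial $p\in\pbb^{k'}(T;\rbb)$. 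What the paper's route buys instead is the structural fact that the potential reconstruction of $(\pw_T(\uv),\Pi^k_{\dt}(v|_{\dt}))$ has the same gradient as the elliptic projection ${\cal{E}}_T^{k+1}(v)$, which is of independent interest, and a proof that stays within the definitions rather than delegating to the stability result, which the paper cites from the literature rather than reproving. The two points of care you flag — linearity of both variants of $S_{\dt}^{\HHO}$, and keeping the zero face component in the residual pair so that Lemma~\ref{prop:stability} applies with $v_{\dt}=0$ — are indeed the only delicate points, and you handle both correctly.
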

\begin{proof}
(1) In the equal-order case ($k'=k$), the definition \eqref{HHO-stab:1a} of $S_{\dt}^{\rm{eo}}$ leads to 
\begin{align*}
S_{\dt}^{\rm{eo}}(\pw_T(\uv),\Pi^k_{\dt}(v|_{\dt}))  &= \Pi^k_{\dt}\Big\{\pw_T(\uv)|_{\dt}-\Pi^k_{\dt}(v|_{\dt})+((1-\Pi^k_T)R_T(\pw_T(\uv),\Pi^k_{\dt}(v|_{\dt})))|_{\dt}\Big\}
\\&=\Pi^k_{\dt}\Big\{\pw_T(\uv)|_{\dt}-v|_{\dt}+((1-\Pi^k_T)R_T(\pw_T(\uv),\Pi^k_{\dt}(v|_{\dt})))|_{\dt}\Big\}
\\&= \Pi^k_{\dt}\Big\{(\pw_T(\uv) - \Pi^k_T(v))|_{\dt}+(\Pi^k_T(\eta) - \eta)|_{\dt}\Big\},
\end{align*}
where $\eta:= v- R_T(\pw_T(\uv),\Pi^k_{\dt}(v|_{\dt}))$. The $L^2$-stability of $\Pi^k_{\dt}$ and the triangle inequality imply that
\begin{align}
\norm{S_{\dt}^{\rm{eo}}(\pw_T(\uv),\Pi^k_{\dt}(v|_{\dt}))}_{\dt} &\leq \norm{\pw_T(\uv) - \Pi^k_T(v)}_{\dt}+\norm{\eta-\Pi^k_T(\eta)}_{\dt} \nonumber\\&
\lesssim h_T^{-\frac{1}{2}}\norm{\pw_T(\uv) - \Pi^k_T(v)}_{T}+h_T^{\frac{1}{2}}\norm{\nabla \eta}_T,\label{eq:4.18}
\end{align}
with a discrete trace inequality and the Poincar\'e inequality in the second step.	Notice from the definition of $R_T$ that, for all $q\in \pbb^{k+1}_*(T;\rbb)$,
\begin{align*}
(\nabla R_T(\pw_T(\uv),\Pi^k_{\dt}(v|_{\dt})),\nabla q)_T &= - (\pw_T(\uv), \Delta q)_T + (\Pi^k_{\dt}(v|_{\dt}),\nabla q{\cdot}\bn_T)_{\dt}\\&
=-(v,\Delta q)_T + (v,\nabla q{\cdot}\bn_T)_{\dt}\\&
=(\nabla v,\nabla q)_T,
 \end{align*}
where the second equality follows from the definition \eqref{H:1a} of $\pw_T(\uv)$ since $\Delta q\in \pbb^{k-1}(T;\rbb)$ and the $L^2$-orthogonality of $\Pi^k_{\dt}$ since $\nabla q{\cdot}\bn_T\in\pbb^k(\cF_{\dt};\rbb)$, and the third equality follows from an integration by parts. The above identity implies that $ \nabla R_T(\pw_T(\uv),\Pi^k_{\dt}(v|_{\dt})) = \nabla {\cal{E}}^{k+1}_T(v) $.
This  in \eqref{eq:4.18} gives 
\begin{align}
h_T^{-\frac{1}{2}}	\norm{S_{\dt}^{\rm{eo}}(\pw_T(\uv),\Pi^k_{\dt}(v|_{\dt}))}_{\dt}\lesssim h_T^{-1}\norm{\pw_T(\uv) - \Pi^k_T(v)}_{T} + \norm{\nabla(v-{\cal{E}}^{k+1}_T(v))}_T,\label{equal-stab-est}
\end{align}
which proves \eqref{stab-bound}.

\noindent (2) In the mixed-order case ($k'=k+1$), the definition \eqref{HHO-stab:1b} of $S_{\dt}^{\rm{mo}}$ and the $L^2$-stability of $\Pi^k_{\dt}$ lead to
\begin{align}
\norm{S_{\dt}^{\rm{mo}}(\pw_T(\uv),\Pi^k_{\dt}(v|_{\dt}))}_{\dt}  &= \norm{\Pi^k_{\dt}((\pw_T(\uv)-v)|_{\dt})}_{\dt}\nonumber
\\&\leq \norm{\pw_T(\uv)-v}_{\dt}\nonumber
\\&\leq\norm{\pw_T(\uv) - \Pi_T^{k'}(v)}_{\dt} +\norm{\Pi_T^{k'}(v)  - v}_{\dt},\label{4.16}
\end{align}
by the triangle inequality. The application of a discrete trace inequality for the first term and \eqref{3.16} for the second term on the right-hand side of \eqref{4.16} concludes the proof.
\end{proof}

\begin{lemma}[Relation between stabilization and divergence of dual variable]\label{lem:A3}
For all $T\in\cT$ and all $\uv:=(\bsg,v)\in (\bH^{\nu_{\bsg}}(T)\cap \bH(\nabla{\cdot};T))\times H^{\nu_v}(T)$ with $\nu_{\bsg}>\frac{1}{2}$ and $\nu_v>\frac{1}{2}$, we have
\begin{align*}
\tau_T(S_{\dt}^{\HHO}(\pw_T(\uv),\Pi^k_{\dt}(v|_{\dt})), S_{\dt}^{\HHO}(0,\Pi^k_{\dt}(w|_{\dt})))_{\dt} = (\nabla{\cdot} \bsg, w)_{T}\qquad\forall w\in\pbb^{k'}_\perp(T;\rbb).%\label{prop:1b}
\end{align*}
\end{lemma}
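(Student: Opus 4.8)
The plan is to peel off the stabilization pairing using the very equation \eqref{H:proj.c} that defines the normal component of $\ps_T(\uv)$, and then to trade the resulting boundary flux for the volume quantity $(\nabla{\cdot}\bsg,w)_T$ by means of \eqref{H:proj.d} and one integration by parts. First I would take the test function $\mu:=\Pi^k_{\dt}(w|_{\dt})\in\pbb^k(\cF_{\dt};\rbb)$ in \eqref{H:proj.c}. Recalling that $\pw_T(\uv)=\Pi^v_T(\uv)$, the left-hand side of the claim is exactly the right-hand side of \eqref{H:proj.c}, hence it equals $((\bsg-\ps_T(\uv))\SCAL\bn_T,\Pi^k_{\dt}(w|_{\dt}))_{\dt}$.

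Next I would move the projection onto the first factor by self-adjointness of $\Pi^k_{\dt}$. Because the normal trace $\ps_T(\uv)\SCAL\bn_T$ is already piecewise of degree $k$ on $\dt$ and $\Pi^k_{\dt}(\bsg\SCAL\bn_T)=\bPi^k_{\dt}(\bsg|_{\dt})\SCAL\bn_T$ (valid since $\bn_T$ is constant on each simplicial face), this rewrites the expression as $((\bPi^k_{\dt}(\bsg|_{\dt})-\ps_T(\uv))\SCAL\bn_T,w)_{\dt}$. I would then split this flux as $((\bPi^k_{\dt}(\bsg|_{\dt})-\bsg)\SCAL\bn_T,w)_{\dt}+((\bsg-\ps_T(\uv))\SCAL\bn_T,w)_{\dt}$. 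The first summand is precisely the right-hand side of \eqref{H:proj.d} evaluated at $q:=w\in\pbb^{k'}(T;\rbb)$, so it equals $(\ps_T(\uv)-\bsg,\nabla w)_T$.

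For the second summand I would integrate by parts, which is licit because $\bsg\in\bH^{\nu_{\bsg}}(T)\cap\bH(\nabla{\cdot};T)$ with $\nu_{\bsg}>\frac12$ provides a normal trace in $L^2(\dt)$ together with $\nabla{\cdot}\bsg\in L^2(T)$, while $\ps_T(\uv)$ is polynomial; this gives $(\nabla{\cdot}(\bsg-\ps_T(\uv)),w)_T+(\bsg-\ps_T(\uv),\nabla w)_T$. Adding the contribution $(\ps_T(\uv)-\bsg,\nabla w)_T$ found above makes the two volume gradient terms cancel, leaving $(\nabla{\cdot}(\bsg-\ps_T(\uv)),w)_T$. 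Finally, $\ps_T(\uv)\in\pbb^k(T;\rbb^d)$ forces $\nabla{\cdot}\ps_T(\uv)\in\pbb^{k-1}(T;\rbb)$, while $w\in\pbb^{k'}_\perp(T;\rbb)$ is $L^2(T)$-orthogonal to $\pbb^{k-1}(T;\rbb)$ by \eqref{4.8}; hence $(\nabla{\cdot}\ps_T(\uv),w)_T=0$ and we are left with $(\nabla{\cdot}\bsg,w)_T$, as claimed.

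The argument involves no hard analytic estimate: once the test functions $\mu:=\Pi^k_{\dt}(w|_{\dt})$ and $q:=w$ are chosen, it is a bookkeeping exercise built on the cancellation of the gradient terms and the degree count $\nabla{\cdot}\ps_T(\uv)\in\pbb^{k-1}(T;\rbb)$. The point I would check most carefully is that \eqref{H:proj.d} may indeed be invoked with $q:=w$ uniformly in $k'$: for $k'=k$ both of its sides vanish so the identity still holds, whereas for $k'=k+1$ its genuine content lives on $\tilde{\pbb}^{k+1}(T;\rbb)$; in either case the manipulation goes through verbatim.
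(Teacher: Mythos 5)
Your proposal is correct and follows essentially the same route as the paper's proof: test \eqref{H:proj.c} with $\mu:=\Pi^k_{\dt}(w|_{\dt})$, use the $L^2$-orthogonality of $\Pi^k_{\dt}$ (together with the facts that $\bn_T$ is facewise constant and $\ps_T(\uv)\SCAL\bn_T\in\pbb^k(\cF_{\dt};\rbb)$) to replace the projected test function by $w$, split the flux, invoke \eqref{H:proj.d} with $q:=w$ and an integration by parts, cancel the gradient terms, and kill $(\nabla{\cdot}\ps_T(\uv),w)_T$ by the orthogonality defining $\pbb^{k'}_\perp(T;\rbb)$. Your closing check that \eqref{H:proj.d} is valid for all $q\in\pbb^{k'}(T;\rbb)$ in both the equal- and mixed-order cases is a point the paper settles once and for all right after stating the unified definition \eqref{H:proj}, so no extra argument is needed there.
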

\begin{proof}
Let $w\in\pbb^{k'}_\perp(T;\rbb)$.	The equation \eqref{H:proj.c} with $\mu:=\Pi^k_{\dt}(w|_{\dt})$ and the $L^2$-orthogonality of $\Pi^k_{\dt}$  lead to
\begin{align*}
\tau_T(S_{\dt}^{\HHO}(\pw_T(\uv),\Pi^k_{\dt}(v|_{\dt})), {}&S_{\dt}^{\HHO}(0,\Pi^k_{\dt}(w|_{\dt})))_{\dt}\\ &= (\bsg-\ps_T(\uv)){\cdot}\bn_T,\Pi^k_{\dt}(w|_{\dt}))_{\dt} \\&= ((\bPi^k_{\dt}(\bsg|_{\dt})-\ps_T(\uv))){\cdot}\bn_T,w)_{\dt}\\&=((\bPi^k_{\dt}(\bsg|_{\dt})-\bsg){\cdot}\bn_T,w)_{\dt}+((\bsg-\ps_T(\uv))){\cdot}\bn_T,w)_{\dt}.
\end{align*}
The equation \eqref{H:proj.d} for the first term and an integration by parts for the second term imply that
\begin{align*}
\tau_T(S_{\dt}^{\HHO}(\pw_T(\uv),\Pi^k_{\dt}(v|_{\dt})),{} &S_{\dt}^{\HHO}(0,\Pi^k_{\dt}(w|_{\dt})))_{\dt}\\
&=(\ps_T(\uv)-\bsg, \nabla w)_{T}+ (\nabla{\cdot}(\bsg-\ps_T(\uv)),w)_{T} + (\bsg-\ps_T(\uv), \nabla w)_{T} \\&=(\nabla{\cdot}(\bsg-\ps_T(\uv)),w)_{T}\\&= (\nabla{\cdot}\bsg, w)_{T},
\end{align*}
where we used $(\nabla{\cdot} \ps_T(\uv), w)_{T} =0 $ since $w\in\pbb^{k'}_\perp(T;\rbb)$. This completes the proof.
\end{proof}

 Recall the polynomial space $\pbb^{k'}_\perp(T;\rbb)$ from \eqref{4.8}. For any linear form $\Phi:\pbb^{k'}_\perp(T;\rbb)\to \rbb$, we set $\norm{\Phi}_{(\pbb^{k'}_\perp)'}:=\sup_{w\in\pbb^{k'}_\perp(T;\rbb)\setminus\{0\}}\frac{\abs{\Phi(w)}}{\norm{w}_{T}}$.
\begin{lemma}[Bound on polynomials in $\pbb^{k'}_\perp(T;\rbb)$ by HHO stabilization]\label{lem:4.1}
For all $T\in\cT$ and all $p\in\pbb^{k'}_\perp(T;\rbb)$, we have
%\begin{subequations} 
\begin{align*}
\norm{p}_{T}\lesssim h_T\norm{b_p}_{(\pbb^{k'}_\perp)'},%\label{S2}
\end{align*}
%\end{subequations}
where $b_p:\pbb^{k'}_\perp(T;\rbb)\to\rbb$ is the linear form defined by
\begin{align}%\tag{5.12b}
b_p(w):=(S_{\dt}^{\HHO}(p,0),S_{\dt}^{\HHO}(0,\Pi^k_{\dt}(w|_{\dt})))_{\dt}\qquad\forall w\in\pbb^{k'}_\perp(T;\rbb).\label{def:b}
\end{align}
\end{lemma}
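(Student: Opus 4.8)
The plan is to test the dual norm defining $\norm{b_p}_{(\pbb^{k'}_\perp)'}$ against the single function $w=p$, and to show that this one choice already recovers $\norm{p}_T$ up to the factor $h_T$. Since $p\in\pbb^{k'}_\perp(T;\rbb)\subset\pbb^{k'}(T;\rbb)$, the stabilization identity \eqref{stab-id} applied to $p$ gives $S_{\dt}^{\HHO}(p,\Pi^k_{\dt}(p|_{\dt}))=0$, whence by linearity $S_{\dt}^{\HHO}(0,\Pi^k_{\dt}(p|_{\dt}))=-S_{\dt}^{\HHO}(p,0)$. Substituting this into the definition \eqref{def:b} of $b_p$ yields $b_p(p)=-\norm{S_{\dt}^{\HHO}(p,0)}_{\dt}^2$, so that
\begin{align*}
\norm{b_p}_{(\pbb^{k'}_\perp)'}\ge \frac{\abs{b_p(p)}}{\norm{p}_T}=\frac{\norm{S_{\dt}^{\HHO}(p,0)}_{\dt}^2}{\norm{p}_T}.
\end{align*}
Hence the claim reduces to proving the lower bound $\norm{S_{\dt}^{\HHO}(p,0)}_{\dt}^2\gtrsim h_T^{-1}\norm{p}_T^2$ for all $p\in\pbb^{k'}_\perp(T;\rbb)$.

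To establish this lower bound, the crucial observation is that the reconstructed gradient of the pair $(p,0)$ vanishes. Indeed, using the first equality in \eqref{def:G} with cell value $p$ and zero face value, for every $\bxi_T\in\pbb^k(T;\rbb^d)$ one has $(\bG_T(p,0),\bxi_T)_T=-(p,\dv\bxi_T)_T=0$, because $\dv\bxi_T\in\pbb^{k-1}(T;\rbb)$ while $p\in\pbb^{k'}_\perp(T;\rbb)$ is $L^2$-orthogonal to $\pbb^{k-1}(T;\rbb)$; thus $\bG_T(p,0)=\bzero$. I would then invoke the stability estimate of Lemma~\ref{prop:stability} for $\h{v}_T:=(p,0)$: noting that $\delta_{\dt}(p,0)=p|_{\dt}$ and that $\bG_T(p,0)=\bzero$, the right-hand side of the equivalence collapses to the stabilization contribution, giving
\begin{align*}
\norm{\nabla p}_T^2+h_T^{-1}\norm{p}_{\dt}^2\lesssim h_T^{-1}\norm{S_{\dt}^{\HHO}(p,0)}_{\dt}^2.
\end{align*}
Finally, Lemma~\ref{lem:A1} yields $\norm{p}_T^2\lesssim h_T^2\big(\norm{\nabla p}_T^2+h_T^{-1}\norm{p}_{\dt}^2\big)$ in both the equal- and mixed-order cases (for $k'=k$ the gradient term is simply absent). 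Combining the last two displays gives $\norm{p}_T^2\lesssim h_T\norm{S_{\dt}^{\HHO}(p,0)}_{\dt}^2$, which is exactly the desired lower bound; inserting it into the bound on $\norm{b_p}_{(\pbb^{k'}_\perp)'}$ above gives $\norm{b_p}_{(\pbb^{k'}_\perp)'}\gtrsim h_T^{-1}\norm{p}_T$, i.e.\ the claimed inequality (the case $p=0$ being trivial).

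The step I expect to be the linchpin — more an enabling idea than a genuine obstacle — is the identity $\bG_T(p,0)=\bzero$ on $\pbb^{k'}_\perp(T;\rbb)$: it is precisely the orthogonality $p\perp\pbb^{k-1}(T;\rbb)$ encoded in the definition \eqref{4.8} of $\pbb^{k'}_\perp$ that kills the gradient-reconstruction term in Lemma~\ref{prop:stability}, so that the stabilization seminorm alone controls $\norm{p}_T$. Once this is recognized, the remainder is a routine assembly of Lemmas~\ref{prop:stability} and~\ref{lem:A1} with the stabilization identity \eqref{stab-id}, and no reference-element or unisolvence argument for the stabilization operator is needed.
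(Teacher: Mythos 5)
Your proposal is correct and follows essentially the same route as the paper's proof: both rest on the identity $\bG_T(p,0)=\bzero$ from the orthogonality $p\perp\pbb^{k-1}(T;\rbb)$, the stability equivalence of Lemma~\ref{prop:stability} applied to $\h{v}_T=(p,0)$, the polynomial bound of Lemma~\ref{lem:A1}, and the identity $b_p(p)=-\norm{S_{\dt}^{\HHO}(p,0)}_{\dt}^2$ obtained from \eqref{stab-id}. The only difference is organizational (you test the dual norm with $w=p$ first and then prove the lower bound, whereas the paper chains the inequalities in the opposite order), which does not change the substance of the argument.
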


\begin{proof}
Let $p\in\pbb_\perp^{k'}(T;\rbb)$. 
Invoking Lemma~\ref{lem:A1} followed by the lower bound from Lemma~\ref{prop:stability} with $\h{v}_T:=(p,0)\in\h{V}_T^k$ gives
\begin{align}
h_T^{-1}\norm{p}^2_{T}&\lesssim  h_T\norm{\nabla p}^2_T+\norm{p}^2_{\dt} \nonumber\\&\lesssim h_T\norm{\bG_T(p,0)}^2_{T}+\norm{S_{\dt}^{\HHO}(p,0)}^2_{\dt}.\label{eq:4.19}
\end{align}
Since $(\bG_T(p,0),\bq)_{T} = -(p,\nabla{\cdot}\bq)_{T} = 0$ for all $\bq\in\pbb^k(T;\rbb^d)$ because $\nabla{\cdot}\bq\in \pbb^{k-1}(T;\rbb)$, we infer that $\bG_T(p,0)=\bzero$.  Next, we observe that  $\norm{S_{\dt}^{\HHO}(p,0)}^2_{\dt}= -(S_{\dt}^{\HHO}(p,0), S_{\dt}^{\HHO}(0,\Pi^k_{\dt}(p|_{\dt})))_{\dt}$ owing to \eqref{stab-id}. The previous two identities in \eqref{eq:4.19} result in
\begin{align*}
h_T^{-1}\norm{p}^2_{T} \lesssim   \abs{(S_{\dt}^{\HHO}(p,0), S_{\dt}^{\HHO}(0,\Pi^k_{\dt}(p|_{\dt})))_{\dt}} = \abs{b_p(p)} \leq \norm{b_p}_{(\pbb^{k'}_\perp)'}\norm{p}_{T},
\end{align*}
where the last bound follows from the definition of the $\norm{\cdot}_{(\pbb^{k'}_\perp)'}$-norm. This concludes the proof.
\end{proof}

\section{Proofs on the H-interpolation operator} 
\label{sec:H-Int}
\subsection{Proof of well-posedness (Lemma~\ref{lem:5.1})}
The linear system defining the H-interpolation operator is square in both equal- and mixed-order cases. Indeed, the number of unknowns $(d+1)\text{dim}( \pbb^k(T;\mathbb{R}))$ (resp. $(d+1)\text{dim}( \pbb^{k}(T;\mathbb{R}))+\text{dim}(\tilde{\pbb}^{k+1}(T;\rbb))$), coincides with the number of equations, $(d+1)(\text{dim}(\pbb^{k-1}(T;\mathbb{R}))+\text{dim}(\pbb^{k}(F;\rbb)))$ (resp. $(d+1)(\text{dim}(\pbb^{k-1}(T;\mathbb{R}))+\text{dim}(\pbb^{k}(F;\rbb))) + \text{dim}(\tilde{\pbb}^{k+1}(T;\rbb))$). Hence, the system is uniquely solvable if and only if we prove that $\ul{\Pi}^{\sH}_T(\ul{0}) = \ul{0}:=(\bold{0},0)$. For $\ul{v} =\ul{0}$, the system \eqref{H:proj} reduces to
\begin{subequations}
\begin{alignat}{2}
(\Pi^v_T(\ul{0}),w)_T &= 0&\qquad &\forall w\in \pbb^{k-1}(T;\rbb),\label{H0.a}\\
(\bPi^{\bsg}_T(\ul{0}),\bxi)_T &= 0 &\qquad &\forall \bxi\in \pbb^{k-1}(T;\rbb^d),\label{H0.b} \\
-(\bPi^{\bsg}_T(\ul{0})\SCAL\bn_T,\mu)_{\dt} &= \tau_T (S_{\dT}^{\HHO}(\Pi^v_T(\ul{0}),0)),S_{\dt}^{\HHO}(0,\mu))_{\dT}  &\qquad&\forall \mu \in \pbb^k(\cF_{\dT};\rbb),\label{H0.c}\\
(\bPi^{\bsg}_T(\ul{0}),\nabla q)_T &= 0 &\qquad &\forall q \in \pbb^{k'}(T;\rbb). \label{H0.d}
\end{alignat}
\end{subequations}
\textit{Step 1 $(\Pi^k_{\dt}(\pw_T(\ul{0})|_{\dt}) = 0)$}. From \eqref{H0.a} with $w = \nabla{\cdot}\ps_T(\ul{0})$ and \eqref{H0.d} with $q=\pw_T(\ul{0})$, we have
\[(\Pi^v_T(\ul{0}), \nabla{\cdot}\ps_T(\ul{0}))_T = 0 = (\ps_T(\ul{0}),\nabla \pw_T(\ul{0}))_T.\]
This with an integration by parts shows that $(\ps_T(\ul{0}){\cdot}\bn_T,\pw_T(\ul{0}))_{\dt}=0$, which implies that $(\ps_T(\ul{0}){\cdot}\bn_T,$\\$\Pi^k_{\dt}(\pw_T(\ul{0})|_{\dt}))_{\dt}=0$ owing to the $L^2$-orthogonality of $\Pi^k_{\dt}$. Substituting this into \eqref{H0.c} with $\mu := \Pi^k_{\dt}(\pw_T(\ul{0})|_{\dt})$ and using $S_{\dT}^{\HHO}(\Pi^v_T(\ul{0}),0) = -S_{\dT}^{\HHO}(0,\Pi^k_{\dt}(\pw_T(\ul{0})|_{\dt}))$ (by \eqref{stab-id}  with $p:=\Pi^v_T(\ul{0})$) lead to
\begin{align} 
S_{\dT}^{\HHO}(\Pi^v_T(\ul{0}),0) = 0. \label{5.6}
\end{align}
In the equal-order case, the definition of the operator $R_T$ implies that, for all $q\in\pbb^{k+1}_*(T;\rbb)$, 
\begin{align*}
(\nabla R_T(\Pi^v_T(\ul{0}),0),\nabla q)_T = -(\Pi^v_T(\ul{0}),\Delta q)_T = 0,
\end{align*}
where the last identity follows from \eqref{H0.a} since $\Delta q\in \pbb^{k-1}(T;\rbb)$. Hence, $ R_T(\Pi^v_T(\ul{0}),0)\in\pbb^0(T;\rbb)$  so that  $(1-\Pi^k_T)R_T(\Pi^v_T(\ul{0}),0) = 0$. This in the definition \eqref{HHO-stab:1a} of $S_{\dt}^{\rm{eo}}$ shows that \[\Pi^k_{\dt}(\pw_T(\ul{0})|_{\dt})= S_{\dt}^{\rm{eo}}(\Pi^v_T(\ul{0}),0)  = 0,\] 
where the last equality follows from \eqref{5.6}. In the mixed-order case,  the definition \eqref{HHO-stab:1b} of $S_{\dt}^{\rm{mo}}$ directly implies that $\Pi^k_{\dt}(\pw_T(\ul{0})|_{\dt}) = 0$.

\noindent\textit{Step 2 $(\pw_T(\ul{0})=0)$}. Step~1 together with an integration by parts gives
\begin{align*}\norm{\nabla \pw_T(\ul{0})}^2_T &= -(\pw_T(\ul{0}),\Delta \pw_T(\ul{0}))_T + (\pw_T(\ul{0}),\nabla \pw_T(\ul{0}){\cdot}\bn_T)_{\dt}\\&=-(\pw_T(\ul{0}),\Delta \pw_T(\ul{0}))_T + (\Pi^k_{\dt}(\pw_T(\ul{0})|_{\dt}),\nabla \pw_T(\ul{0}){\cdot}\bn_T)_{\dt}=0,
\end{align*}
where we used \eqref{H0.a} since $\Delta \pw_T(\ul{0})\in\pbb^{k-1}(T;\rbb)$. Hence $ \pw_T(\ul{0})\in\pbb^0(T;\rbb)$, but we already know that $\Pi^k_{\dt}(\pw_T(\ul{0})|_{\dt}) = 0$. So we must have $\pw_T(\ul{0})=0$. 

\noindent\textit{Step 3 $(\ps_T(\ul{0})=\bold{0})$}. Recalling \eqref{5.6} and since $\bPi^{\bsg}_T(\ul{0})\SCAL\bn_T\in\pbb^k(\cF_{\dt};\rbb)$,  \eqref{H0.c} implies that $\bPi^{\bsg}_T(\ul{0})\SCAL\bn_T=0$. Since $\bPi^{k-1}_T(\bPi^{\bsg}_T(\ul{0})) =\bold{0}$ by \eqref{H0.b}, Lemma~\ref{lem:3.5} gives $\ps_T(\ul{0})=\bold{0}$. 

\subsection{Proof of approximation estimate (Theorem~\ref{prop:A1})}
{\bf (1) \textit{Estimate on $\norm{\pw_T(\uv) - \Pi^{k'}_T(v)}_T$}}. Set $e^v := \pw_T(\uv) - \Pi^{k'}_T(v)$. The equation \eqref{H:1a} and the definition of $\Pi^{k'}_T$ show that $e^v\in\pbb^{k'}_\perp(T;\rbb)$. Recalling the definition \eqref{def:b}, writing 
\[ (e^v,0) = (\pw_T(\uv),\Pi^k_{\dt}(v|_{\dt})) - (\Pi^{k'}_T(v),\Pi^k_{\dt}(v|_{\dt})),\] 
and using Lemma~\ref{lem:A3}, we infer that, for all $w\in\pbb^{k'}_\perp(T;\rbb)$,
\begin{align}
b_{e^v}(w)&=(S_{\dt}^{\HHO}(\pw_T(\uv),\Pi^k_{\dt}(v|_{\dt})),S_{\dt}^{\HHO}(0,\Pi^k_{\dt}(w|_{\dt})))_{\dt}\nonumber\\&\quad-(S_{\dt}^{\HHO}(\Pi^{k'}_T(v),\Pi^k_{\dt}(v|_{\dt})),S_{\dt}^{\HHO}(0,\Pi^k_{\dt}(w|_{\dt})))_{\dt}\nonumber\\&=\tau_T^{-1}(\nabla{\cdot}\bsg, w)_{T}-(S_{\dt}^{\HHO}(\Pi^{k'}_T(v),\Pi^k_{\dt}(v|_{\dt})),S_{\dt}^{\HHO}(0,\Pi^k_{\dt}(w|_{\dt})))_{\dt}\nonumber\\&  = \phi_{\bsg}(w) - \phi_v(w),\label{def:phi}
\end{align}
with the linear forms 
\begin{align*}
\phi_{\bsg}(w) &: = \tau_T^{-1}(\nabla{\cdot}\bsg, w)_{T}, \\
\phi_v(w)&:= (S_{\dt}^{\HHO}(\Pi^{k'}_T(v),\Pi^k_{\dt}(v|_{\dt})),S_{\dt}^{\HHO}(0,\Pi^k_{\dt}(w|_{\dt})))_{\dt}.
\end{align*} 
Since $e^v\in\pbb^{k'}_\perp(T;\rbb)$,  Lemma~\ref{lem:4.1}, and the triangle inequality show that
\begin{align}
\norm{e^v}_{T} \lesssim h_T\norm{b_{e^v}}_{(\pbb^{k'}_\perp)'} \lesssim    h_T(\norm{\phi_{\bsg}}_{(\pbb^{k'}_\perp)'}+\norm{\phi_v}_{(\pbb^{k'}_\perp)'}). \label{est:ev}
\end{align}
Since $w\in\pbb^{k'}_\perp(T;\rbb)$, we have $\phi_{\bsg}(w) =  \tau_T^{-1}(\nabla{\cdot}\bsg - \Pi^{k-1}_T(\nabla{\cdot}\bsg), w)_{T}$. Recalling that $\tau_T^{-1}=\ell_\Omega^{-1}h_T$, this proves that
\begin{align}
\norm{\phi_{\bsg}}_{(\pbb^{k'}_\perp)'}\leq \ell_\Omega^{-1}h_T\norm{\nabla{\cdot}\bsg - \Pi^{k-1}_T(\nabla{\cdot}\bsg)}_{T}.\label{est:bsg}
\end{align}
Furthermore, the Cauchy--Schwarz inequality, \eqref{S1} from Lemma~\ref{lem:3.4}, and Lemma~\ref{prop:stability} for $(0,w|_{\dt})\in \h{V}^k_{T}$ show that
\begin{align}
\abs{\phi_v(w)}&\leq \norm{S_{\dt}^{\HHO}(\Pi^{k'}_T(v),\Pi^k_{\dt}(v|_{\dt}))}_{\dt}\norm{S_{\dt}^{\HHO}(0,\Pi^k_{\dt}(w|_{\dt}))}_{\dt}\nonumber\\ &\lesssim h_T^{\frac{1}{2}}\norm{\nabla(v-{\cal{E}}_T^{k+1}(v))}_T \norm{\Pi^k_{\dt}(w|_{\dt})}_{\dt}\nonumber\\&\lesssim\norm{\nabla(v-{\cal{E}}_T^{k+1}(v))}_T \norm{w}_{T}\nonumber,
\end{align}
where we used the $L^2$-stability of $\Pi^k_{\dt}$ and a discrete trace inequality in the last step. This with the definition of the dual norm on $(\pbb^{k'}_\perp)'$ proves that
\begin{align}
\norm{\phi_v}_{(\pbb^{k'}_\perp)'}\lesssim \norm{\nabla(v-{\cal{E}}_T^{k+1}(v))}_T.\label{est:bv}
\end{align}	
The combination of \eqref{est:bsg} and \eqref{est:bv} in \eqref{est:ev} proves  the bound \eqref{est:pw}.

\noindent{\bf (2) \textit{Estimate on $\norm{\ps_T(\uv)-\bPi^k_T(\bsg)}_{T}$}}.
We fix $d$ faces $F_1,\dots,F_d$ of $T$. Applying Lemma~\ref{lem:3.5} to $\bq:= \bPi^k_T(\bsg)-\ps_T(\uv)$ and using \eqref{H:proj.b} which gives $\bPi^{k-1}_T(\bq)=\bold{0}$, we obtain	 
\begin{align}
\norm{\bPi^k_T(\bsg)-\ps_T(\uv)}_{T}
&\lesssim \sum_{i\in \{1:d\}}h_T^{\frac{1}{2}} \norm{(\bPi^k_T(\bsg)-\ps_T(\uv)){\cdot}\bn_{T}}_{F_i}\nonumber.
\end{align}
The discrete Cauchy--Schwarz inequality and triangle inequality imply that 
\begin{align}
\norm{\bPi^k_T(\bsg)-\ps_T(\uv)}_{T}
&\lesssim h_T^{\frac{1}{2}} \norm{(\bPi^k_T(\bsg)-\ps_T(\uv)){\cdot}\bn_{T}}_{\dt}\nonumber\\
& \lesssim h_T^{\frac{1}{2}} (\norm{(\bPi^k_T(\bsg)-\bPi^k_{\dt}(\bsg|_{\dt})){\cdot}\bn_{T}}_{\dt} +\norm{(\bPi^k_{\dt}(\bsg|_{\dt})-\ps_T(\uv)){\cdot}\bn_T}_{\dt})\nonumber\\&=:T_1+T_2.\label{4.27}
\end{align}
The $L^2$-stability of $\Pi^k_{\dt}$ gives
\begin{align}
T_1=h_T^{\frac{1}{2}}\norm{(\bPi^k_{\dt}((\bPi^k_T(\bsg)-\bsg)|_{\dt})){\cdot}\bn_T}_{\dt}
\leq h_T^{\frac{1}{2}}\norm{(\bPi^k_T(\bsg)-\bsg){\cdot}\bn_T}_{\dt}.\label{5.28}
\end{align}
Since $(\bPi^k_{\dt}(\bsg|_{\dt})-\ps_T(\uv)){\cdot}\bn_T \in \pbb^k(\cF_{\dt};\rbb)$, the $L^2$-orthogonality of $\Pi^k_{\dt}$, the definition \eqref{H:proj.c} of $\ps_T$ with   $\tau_T=\ell_\Omega h_T^{-1}$, see \eqref{def:tau}, and the Cauchy--Schwarz inequality  show that
\begin{align}
T_2&\lesssim   h_T^{\frac{1}{2}} \sup_{q\in\pbb^k(\cF_{\dt};\rbb)\setminus\{0\}}\frac{((\bPi^k_{\dt}(\bsg|_{\dt})-\ps_T(\uv)){\cdot}\bn_T, q)_{\dt}}{\norm{q}_{\dt}}\nonumber\\&
=h_T^{\frac{1}{2}} \sup_{q\in\pbb^k(\cF_{\dt};\rbb)\setminus\{0\}}\frac{((\bsg-\ps_T(\uv)){\cdot}\bn_T, q)_{\dt}}{\norm{q}_{\dt}}\nonumber\\&
=  h_T^{\frac{1}{2}} \sup_{q\in\pbb^k(\cF_{\dt};\rbb)\setminus\{0\}}\frac{\tau_T(S_{\dt}^{\HHO}(\pw_T(\uv),\Pi^k_{\dt}(v|_{\dt})), S_{\dt}^{\HHO}(0,q))_{\dt}}{\norm{q}_{\dt}}
\nonumber\\&\leq \sup_{q\in\pbb^k(\cF_{\dt};\rbb)\setminus\{0\}}\frac{\ell_\Omega h_T^{-\frac{1}{2}}\norm{S_{\dt}^{\HHO}(\pw_T(\uv),\Pi^k_{\dt}(v|_{\dt}))}_{\dt}\norm{S_{\dt}^{\HHO}(0,q)}_{\dt}}{\norm{q}_{\dt}}.\label{eq:4.20}
\end{align}
In the equal-order case, the definition \eqref{HHO-stab:1a} of $S_{\dt}^{\rm{eo}}$ implies that, for all $(0,q)\in\h{V}_T^k$,
\begin{align*}
\norm{S_{\dt}^{\rm{eo}}(0,q)}_{\dt}&\leq \norm{q}_{\dt}+\norm{(1-\Pi^k_T)R_T(0,q)}_{\dt}
\\&\lesssim \norm{q}_{\dt}+ h_T^{-\frac{1}{2}}\norm{(1-\Pi^k_T)R_T(0,q)}_{T}
\\&\lesssim \norm{q}_{\dt}+h_T^{\frac{1}{2}}\norm{\nabla R_T(0,q)}\lesssim \norm{q}_{\dt},
\end{align*}
using the triangle inequality, a discrete trace inequality, the Poincar\'e inequality in $T$, and the fact that $\norm{\nabla R_T(0,q)}_T\lesssim h_T^{-\frac{1}{2}}\norm{q}_{\dt}$ owing to the definition of $R_T$. 

In the mixed-order case, the definition \eqref{HHO-stab:1b} of $S_{\dt}^{\rm{mo}}$ directly implies that $\norm{S_{\dt}^{\rm{mo}}(0,q)}_{\dt}= \norm{q}_{\dt}$. Hence, altogether, we have $\norm{S_{\dt}^{\HHO}(0,q)}_{\dt}\lesssim \norm{q}_{\dt} $.  This in \eqref{eq:4.20} gives
\begin{align}
T_2&\lesssim   \ell_\Omega h_T^{-\frac{1}{2}}\norm{S_{\dt}^{\HHO}(\pw_T(\uv),\Pi^k_{\dt}(v|_{\dt}))}_{\dt}\nonumber\\&\lesssim \ell_\Omega h_T^{-1}\norm{\pw_T(\uv)- \Pi^{k'}_T(v)}_{T} + \ell_\Omega\norm{\nabla(v-{\cal{E}}^{k+1}_T(v))}_T\nonumber\\&\lesssim h_T\norm{\nabla{\cdot}\bsg - \Pi^{k-1}_T(\nabla{\cdot}\bsg)}_T+\ell_\Omega\norm{\nabla(v-{\cal{E}}^{k+1}_T(v))}_T,\label{4.21}
\end{align}
where we used  Lemma~\ref{lem4.4} and \eqref{est:pw}. Using the estimates \eqref{5.28} and  \eqref{4.21} in \eqref{4.27} concludes the proof of \eqref{est:ps}.

\subsection{Proof of approximation estimate with relaxed regularity (Theorem~\ref{thm:relax-reg})}

The weaker regularity assumption on $\nabla{\cdot}\bsg$ only affects the linear form $\phi_{\bsg}$ in \eqref{def:phi} which is now defined as  $\phi_{\bsg}(w): = \tau_T^{-1}\langle\nabla{\cdot}\bsg, w \rangle_{T}$   for all $w\in\pbb^{k'}_\perp(T;\rbb)$. Step~(1) in the proof of Theorem~\ref{prop:A1} is then modified as follows. Take $w\in\pbb^{k'}_\perp(T;\rbb)$.  Since $\langle\Pi^{k-1}_T(\nabla{\cdot}\bsg), w \rangle_T =(\Pi^{k-1}_T(\nabla{\cdot}\bsg), w)_T = 0$, we obtain
\begin{align*}
\ell_\Omega|\phi_{\bsg}(w)| = h_T |\langle\nabla{\cdot}\bsg, w \rangle_{T}|&= h_T |\langle\nabla{\cdot}\bsg-\Pi^{k-1}_T(\nabla{\cdot}\bsg), w \rangle_{T}|.
\end{align*}
The definition \eqref{def-dualnorm} of  the dual norm $\norm{\cdot}_{H^{s-1}(T)}$ and an inverse estimate for $w$ show that 
\begin{align*}
\ell_\Omega |\phi_{\bsg}(w)|& \leq h_T \|\nabla{\cdot}\bsg-\Pi^{k-1}_T(\nabla{\cdot}\bsg)\|_{H^{s-1}(T)}|w|_{H^{1-s}(T)}\\
& \lesssim h_T \norm{\nabla{\cdot}\bsg-\Pi^{k-1}_T(\nabla{\cdot}\bsg)}_{H^{s-1}(T)} h_T^{s-1} \norm{w}_{T} \\&= h_T^s \norm{\nabla{\cdot}\bsg-\Pi^{k-1}_T(\nabla{\cdot}\bsg)}_{H^{s-1}(T)}\norm{w}_{T}.
\end{align*}
This and  the definition of the $(\pbb^{k'}_\perp)'$-norm imply that
\begin{align}
\norm{\phi_{\bsg}}_{(\pbb^{k'}_\perp)'}\lesssim \ell_\Omega h_T^s \norm{\nabla{\cdot}\bsg-\Pi^{k-1}_T(\nabla{\cdot}\bsg)}_{H^{s-1}(T)}.
\end{align}
The rest of the proof of Theorem~\ref{prop:A1} remaining unchanged, this readily leads to the estimate \eqref{est:relax}.

\section{Proofs of error equations and error estimates}
\label{sec:errest}

\subsection{Error equations}\label{sec:error_eq}

\subsubsection{Proof of Lemma~\ref{lem:6.1}}
\textbf{(1)} \textit{Proof of \eqref{h-error:1a}}. The discrete problem \eqref{eq:HHO_flux} and the definition \eqref{def:G} of $\bG_\cT$ lead, for all $t\in\ol{J}$ and all $\bxi_\cT\in \bS_\cT^k$, to
\begin{align*}
&(\partial_t\bet^{\sH}_{\cT}(t),\bxi_\cT)_{\Omega} - (\bG_\cT(\hat{e}_\cM^{\sH}(t)),\bxi_\cT)_{\Omega}\nonumber\\
& =  -(\partial_t\bPi^{\bsg}_\cT(\ul{v}(t)),\bxi_\cT)_{\Omega}+ (\bG_\cT(\Pi^v_\cT(\uv(t)),\Pi_\cF^k(v(t)|_{\cF})),\bxi_\cT)_{\Omega}\nonumber\\
& =- (\partial_t\bPi^{\bsg}_\cT(\ul{v}(t)),\bxi_\cT)_{\Omega}+\sum_{T\in\cT} \Big\{- (\Pi^v_T(\ul{v}(t)), \nabla{\cdot}\bxi_T)_T + (\Pi^k_{\partial T}(v(t)|_{\dt}),\bxi_T{\cdot}\bn_T)_{\partial T}\Big\}\nonumber\\& 
=- (\partial_t\bPi^{\bsg}_\cT(\ul{v}(t)),\bxi_T)_{\Omega} +\sum_{T\in\cT}\big\{ - (v(t),\nabla{\cdot}\bxi_T)_T + (v(t),\bxi_T{\cdot}\bn_T)_{\partial T}\big\}\nonumber\\& = - (\partial_t\bPi^{\bsg}_\cT(\ul{v}(t)),\bxi_\cT)_{\Omega} +(\nabla v(t),\bxi_T)_{\Omega} = (\partial_t(\bsg(t)-\bPi^{\bsg}_\cT(\ul{v}(t))),\bxi_\cT)_{\Omega},%\label{err-eq1}
\end{align*}
where we used the definition  of $\Pi^v_T$, \eqref{H:proj.a}, and the $L^2$-orthogonality of $\Pi^k_{\dt}$ on the fourth line (observe that $\nabla{\cdot}\bxi_T \in \pbb^{k-1}(T;\rbb)$ and $\bxi_T{\cdot}\bn_T\in\pbb^k(\cF_{\dt};\rbb)$),  an integration by parts and the model problem \eqref{eq:model.1a} on the fifth line. This proves \eqref{h-error:1a}.

\noindent\textbf{(2)} \textit{Proof of \eqref{h-error:1b}}.	The discrete problem \eqref{eq:HHO_primal} and the continuous problem \eqref{eq:model.1b} show that, for all $t\in\ol{J}$ and all $\h{w}_\cM\in\h{V}_{\cM 0}^k$,
\begin{align}
&(\partial_te^{\sH}_\cT(t),w_\cT)_{\Omega}+(\bet^{\sH}_\cT(t),\bG_\cT(\h{w}_\cM))_{\Omega} +s_{\cM}(\hat{e}_\cM^{\sH}(t),\hat{w}_\cM)\nonumber\\& = ( \partial_tv(t)-\nabla{\cdot}\bsg(t),w_\cT)_{\Omega} - (\partial_t \Pi^v_\cT(\ul{v}(t)),w_\cT)_{\Omega}-(\bPi^{\bsg}_\cT(\ul{v}(t)),\bG_\cT(\h{w}_\cM))_{\Omega}\nonumber\\&\qquad-s_{\cM}((\Pi^v_\cT(\uv(t)),\Pi_\cF^k(v(t)|_\cF)),\hat{w}_\cM)\nonumber\\& =(\partial_t(v(t)-\Pi^v_\cT(\ul{v}(t))),w_\cT)_{\Omega}+ \sum_{T\in\cT}\Big\{(\bsg(t),\nabla w_T)_{T}- (\bsg(t){\cdot}\bn_T , w_T)_{\partial T}-(\bPi^{\bsg}_T(\ul{v}(t)), \nabla w_T)_{T} \nonumber \\&\qquad  + (\bPi^{\bsg}_T(\ul{v}(t)){\cdot}\bn_T,w_T-w_{\partial T})_{\partial T}\Big\} -  s_{\cM}((\Pi^v_\cT(\uv(t)),\Pi_\cF^k(v(t)|_\cF)),\hat{w}_\cM),\nonumber
\end{align}
where we used an integration by parts (recall that $\boldsymbol{\sigma}\in\boldsymbol{H}^{\nu_{\boldsymbol{\sigma}}}(\Omega), \nu_{\boldsymbol{\sigma}}>\frac12$ by assumption), and the definition  \eqref{def:G} of $\bG_\cT$ on the third line. Noticing that 
\begin{align*}
(\bsg(t),\nabla w_T)_{T}- (\bsg(t){\cdot}\bn_T , w_T)_{\partial T}-(\bPi^{\bsg}_T(\ul{v}(t)), \nabla w_T)_{T} = -(\bPi^k_{\dt}(\bsg(t)|_{\dt}){\cdot}\bn_T,w_T)_{\dt}
\end{align*}
owing to \eqref{H:proj.d},  we arrive at
\begin{align}
&(\partial_te^{\sH}_\cT(t),w_\cT)_{\Omega}+(\bet^{\sH}_\cT(t),\bG_\cT(\h{w}_\cM))_{\Omega} +s_{\cM}(\hat{e}_\cM^{\sH}(t),\hat{w}_\cM)\nonumber\\&
=(\partial_t(v(t)-\Pi^v_\cT(\ul{v}(t))),w_\cT)_{\Omega} +\sum_{T\in\cT}\Big\{((\bPi^{\bsg}_T(\ul{v}(t))-\bPi^k_{\partial T}(\bsg(t)|_{\dt}){\cdot}\bn_T, w_T-w_{\partial T})_{\partial T}\Big\}\nonumber\\&\qquad
-  s_{\cM}((\Pi^v_\cT(\uv(t)),\Pi_\cF^k(v(t)|_\cF)),\hat{w}_\cM),\label{5.5}
\end{align}
where we used that  $\sum_{T\in\cT}(\bPi^k_{\dt}(\bsg(t)|_{\dt}){\cdot}\bn_T,w_{\dt})_{\dt}=0$. Invoking the $L^2$-orthogonality of $\Pi^k_{\dt}$, using \eqref{H:proj.c} with $\mu:=\Pi^k_{\partial T}(w_T|_{\dt})-w_{\partial T}$, and observing that $S_{\dt}^{\HHO}(\h{w}_T) = -S_{\dt}^{\HHO}(0,\Pi^k_{\partial T}(w_T|_{\dt})-w_{\partial T}))_{\dt}$ owing to \eqref{stab-id}, we infer that
\begin{align*}
&\sum_{T\in\cT}\Big\{((\bPi^{\bsg}_T(\ul{v}(t))-\bPi^k_{\partial T}(\bsg(t)|_{\dt}){\cdot}\bn_T, w_T-w_{\partial T})_{\partial T}\Big\}\nonumber\\
&\quad=\sum_{T\in\cT}((\bPi^{\bsg}_T(\ul{v}(t))-\bsg(t)){\cdot}\bn_T, \Pi^k_{\partial T}(w_T|_{\dt})-w_{\partial T})_{\partial T}\nonumber\\&\quad
=-\sum_{T\in\cT}\tau_T(S_{\dt}^{\HHO}(\Pi^v_T(\uv(t)),\Pi_{\dt}^k(v(t)|_{\dt})),S_{\dt}^{\HHO}(0,\Pi^k_{\partial T}(w_T|_{\dt})-w_{\partial T}))_{\dt}\\
&\quad =s_{\cM}((\Pi^v_\cT(\uv(t)),\Pi_\cF^k(v(t)|_\cF)),\hat{w}_\cM).
\end{align*}
This  in \eqref{5.5} concludes the proof of \eqref{h-error:1b}.

\subsubsection{Proof of Lemma~\ref{lem:err_HHO}}

One can refer to \cite[Theorem 3]{BDES_2021}.

\subsubsection{Proof of Corollary~\ref{cor:comb_err}}

Shifting the terms on the right-hand side of \eqref{h-error} to the left-hand side gives
\begin{align*}
&\Big\{(\pt\bet_\cT^{\sH}(t),\bxi_\cT)_{\Omega}-(\pt(\bsg(t)-\bPi_\cT^{\bsg}(\uv(t))),\bxi_\cT)_{\Omega}\Big\} - (\bG_\cT(\h{e}_\cM^{\sH}(t)),\bxi_\cT)_{\Omega}=0,\\
&\Big\{(\pt e_\cT^{\sH}(t),w_\cT)_{\Omega}-(\partial_t(v(t)-\Pi^v_\cT(\ul{v}(t))),w_\cT)_{\Omega}\Big\} +(\bet_\cT^{\sH}(t), \bG_\cT(\h{w}_\cM))_{\Omega} +s_{\cM}(\h{e}_\cM^{\sH}(t),\h{w}_\cM) = 0.
\end{align*}
The $L^2$-orthogonality of  $\bPi^k_{\cT}$ and $\Pi^{k'}_{\cT}$ lead to
\begin{align*}
(\pt\bet_\cT^{\sH}(t),\bxi_\cT)_{\Omega}-(\pt(\bsg(t)-\bPi_\cT^{\bsg}(\uv(t))),\bxi_\cT)_{\Omega} &= (\pt\bet_\cT^{\sPi}(t),\bxi_\cT)_{\Omega},\\
(\pt e_\cT^{\sH}(t),w_\cT)_{\Omega}-(\partial_t(v(t)-\Pi^v_\cT(\ul{v}(t)),w_\cT)_{\Omega} &= (\pt e_\cT^{\sPi}(t),w_\cT)_{\Omega},
\end{align*}
and, consequently, prove \eqref{h:error-new}.

\subsubsection{Proof of Corollary~\ref{cor:init-dt}}

From the error equation \eqref{h-error:1b} at $t=0$ and the initial conditions $\bet^{\sH}_\cT(0) = \boldsymbol{0}$ and $e^{\sH}_\cT(0) =0$, we infer that \[s_{\cM}((0,e_\cF^{\sH}(0)),(0,w_\cF)) = 0\qquad \forall w_\cF\in V_{\cF 0}^k.\] This implies that  $e_\cF^{\sH}(0) = 0$. Invoking these initial conditions in \eqref{h:error-new.a}-\eqref{h:error-new.b} at $t=0$ proves the claim.

\subsection{The energy argument (Proof of Theorem~\ref{thm:energy-error})}
\label{sec:analysis}

For all $t\in \ol{J}$, recall the space semi-discrete errors
\begin{alignat*}{2}
\bet_\cT^{\sH} (t) &:= \bsg_\cT(t) - \bPi^{\bsg}_\cT(\ul{v}(t)), \qquad &\h{e}_\cM^{\sH}(t) &:= \h{v}_\cM(t)-(\Pi^v_\cT(\uv(t)),\Pi_\cF^k(v(t)|_\cF)),\\
\bet_\cT^{\sPi} (t) &:= \bsg_\cT(t) - \bPi^{k}_\cT(\bsg(t)), \qquad &\h{e}_\cM^{\sPi}(t) &:= \h{v}_\cM(t)-(\Pi^{k'}_\cT(v(t)),\Pi^k_\cF(v(t)|_\cF)).
\end{alignat*}

{\bf (1)} As shown in \cite{BDES_2021}, substituting $\h{w}_\cM := \h{e}_\cM^{\sPi}(t)$ in \eqref{hho-error:1b} and $\bxi_\cT := \bet_\cT^{\sPi}(t)$ in \eqref{hho-error:1a} for all $t\in \ti{J}$, we obtain after some straightforward manipulations
\begin{align*}
&\norm{e_\cT^{\sPi}}^2_{C^0(J_t;\Omega)} + \norm{\bet_\cT^{\sPi}}^2_{C^0(J_t;\Omega)} + |\h{e}_\cM^{\sPi}|^2_{L^2(J_t;\sS)}\lesssim \norm{e_\cT^{\sPi}(0)}^2_\Omega+ \norm{\bet_\cT^{\sPi}(0)}^2_\Omega \nonumber\\
&\hspace{5cm}+\norm{\psi_{\cM}^{\sPi}(\uv;\cdot)}^2_{L^{\infty}(J_t;(\sPi)')}+\norm{\psi_{\cM}^{\sPi}(\pt \uv;\cdot)}^2_{L^{1}(J_t;(\sPi)')}.%\label{4.12}
\end{align*}
The initial condition \eqref{def:initc} and the  Pythagoras identity based on the $L^2$-orthogonalities of $\Pi_\cT^{k'}$ and $\bPi_\cT^k$ imply that
\begin{align*}
	\norm{e_\cT^{\sPi}(0)}_\Omega &= \norm{\Pi_\cT^v(\ul{v}_0)-\Pi_\cT^{k'}(v_0)}_\Omega \leq \norm{v_0-\Pi_\cT^v(\ul{v}_0)}_\Omega,\\
	\norm{\bet_\cT^{\sPi}(0)}_\Omega &= \norm{\bPi_\cT^{\bsg}(\ul{v}_0)-\bPi_\cT^{k}(\bsg_0)}_\Omega \leq \norm{\bsg_0-\bPi_\cT^{\bsg}(\ul{v}_0)}_\Omega.
\end{align*}
Inserting these inequalities in the first inequality of this step and taking the square root proves \eqref{energy-err.a}.

{\bf (2)} Differentiating in time the error equations, we can similarly obtain
\begin{align*}
&\norm{\partial_te^{\sPi}_\cT}^2_{C^0(J_t;\Omega)}+\norm{\partial_t\bet^{\sPi}_\cT}^2_{C^0(J_t;\Omega)} + |\partial_t\h{e}^{\sPi}_\cM|^2_{L^2(J_t;\sS)} \lesssim \norm{\partial_te^{\sPi}_\cT(0)}^2_{\Omega}  +\norm{\partial_t\bet^{\sPi}_\cT(0)}^2_{\Omega}\nonumber\\
&\hspace{5cm}+
\norm{\psi_{\cM}^{\sPi}(\pt \uv;\cdot)}_{L^{\infty}(J_t;(\sPi)')}^{2}+\norm{\psi_{\cM}^{\sPi}(\partial_{tt} \uv;\cdot)}_{L^{1}(J_t;(\sPi)')}^{2}.%\label{err-time}
\end{align*}	
This followed by the zero initial conditions \eqref{IC:time-diff}  concludes the proof of \eqref{energy-err.b}.

\subsection{The duality argument (Proof of Theorem~\ref{thm:tim-int-err})}

{\bf Step 1 (Dual problems)} Following \cite{cockburn2014uniform}, we introduce a dual problem and a time-integrated dual problem. However, we only assume that the elliptic regularity pickup satisfies  $s\in(\frac{1}{2},1]$ instead of $s=1$ in \cite{cockburn2014uniform}.  Let $\theta\in L^2(\Omega)$, and let $\Phith$ be the solution of the homogeneous wave equation 
\begin{align}
\partial_{tt}\Phith-\Delta\Phith = 0 \qquad\text{in}\; J\times\Omega,\label{phi-dual}
\end{align}
with the \emph{final} and boundary conditions
\begin{subequations}
\begin{align}
	\Phith(T_f) = 0,\;\pt\Phith(T_f) = \theta\qquad&\text{on}\;\Omega,\label{Final-Phi}\\
	\Phith(t,x) = 0\qquad&\text{on}\; J\times\Gamma.
\end{align}
\end{subequations}
The following regularity result is classical (see, e.g., \cite[Chapter~8]{allaire2007numerical}):
\begin{align}
\norm{\nabla \Phith}_{L^\infty(J;\bL^2(\Omega))}+\norm{\pt\Phith}_{L^\infty(J;L^2(\Omega))} \lesssim \norm{\theta}_{\Omega}.\label{phi-reg}
\end{align}
We set $\tphith(t):=\int_t^{T_f}\Phith(s)\,\ds$ for all $t\in \ol{J}$. A straightforward calculation shows that $\tphith$ satisfies the inhomogenous wave equation
\begin{align}
\partial_{tt}\tphith-\Delta\tphith = -\theta \quad\text{in}\; J\times\Omega,\label{tphi-dual}
\end{align}
with the final and boundary conditions
\begin{subequations}
\begin{align}
	\tphith(T_f) = 0,\;\pt\tphith(T_f) = 0\quad&\text{on}\;\Omega,\\
	\tphith(t,x) = 0\quad&\text{on}\; J\times\Gamma.
\end{align}
\end{subequations}
Moreover, invoking elliptic regularity in $\Omega$, we infer that there exists some $s\in(\frac{1}{2},1]$ such that 
\begin{align}
\norm{\tphith}_{L^{\infty}(J;L^2(\Omega))} +
\ell_\Omega\norm{\nabla \tphith}_{L^{\infty}(J;\boldsymbol{L}^2(\Omega))}+\, 
\ell_\Omega^{1+s}\abs{\tphith}_{L^{\infty}(J;H^{1+s}(\Omega))} \lesssim \ell_\Omega ^2\norm{\theta}_{\Omega}.\label{tphi-reg}
\end{align}

\noindent{\bf Step 2 (Key identity)}. Let $(\theta_n)_{n\in\mathbb{N}}\subset (C_0^{\infty}(\Omega))^{\mathbb{N}}$ be a sequence such that $\theta_n\to \theta$ in $L^2(\Omega)$. Without loss of generality, we assume that $\norm{\theta_n}_\Omega\lesssim 2\norm{\theta}_\Omega$ for all $n\in\mathbb{N}$. Observe that $\Phi_{\theta_n}$ is smooth in space and in time. The final boundary condition $\pt\Phithn(T_f) = \theta_n$ and  $\ut{e}^{\sPi}_\cT(0)=0$ lead to
\begin{align}
\fE(\Phithn):=(\ut{e}_\cT^{\sPi}(T_f),\theta_n)_{\Omega} &= (\ut{e}_\cT^{\sPi}(T_f),\pt\Phithn(T_f))_{\Omega}\nonumber\\
&=\int_J\frac{d}{dt}(\ut{e}^{\sPi}_\cT(t),\pt\Phithn(t))_{\Omega}\; dt\nonumber\\&=\int_J\left\{(e_\cT^{\sPi}(t),\pt\Phithn(t))_{\Omega}+(\Delta\Phithn(t),\ut{e}^{\sPi}_\cT(t))_{\Omega}\right\}\,dt,\label{6.18}
\end{align}
where we used  $\pt\ut{e}^{\HHO}_\cT(t)=e_\cT^{\HHO}(t)$ and $\partial_{tt}\Phithn = \Delta\Phithn$ in the last step. For all $f\in H^1(J;L^2(\Omega))$ and all $g\in L^2(J;L^2(\Omega))$ with $\ti{g}(t):=\int_t^{T_f}g(s)\,\ds$, the following identity holds: 
\begin{align}
	\int_J(f(t),g(t))_\Omega\,dt =(f(0), \ti{g}(0))_\Omega+\int_J(\pt f(t),\ti{g}(t))_\Omega\,dt.\label{time-int}
\end{align}  
The identity \eqref{time-int} for $f:=\ut{e}^{\sPi}_\cT$ (so that $f(0)=0$ and $\pt f =e^{\sPi}_\cT$) and $g:=\Delta \Phi_{\theta_n}$ implies that
\begin{align}
	\int_J(\Delta \Phi_{\theta_n}(t),\ut{e}^{\sPi}_\cT(t))_\Omega\,dt &= \int_J(\Delta \ti{\Phi}_{\theta_n}(t),e^{\sPi}_\cT(t))_\Omega\,dt\nonumber\\&= -\int_J\sum_{T\in\cT}\Big\{(\nabla \ti{\Phi}_{\theta_n}(t) , \nabla e^{\sPi}_\cT(t))_T-(\nabla \ti{\Phi}_{\theta_n}(t){\cdot}\bn_T,e^{\sPi}_\cT(t))_{\dt}\Big\}dt,\label{7.15}
\end{align}
where we used cellwise integration by parts in space on the second line.
This in \eqref{6.18} leads to
\begin{align*}
\fE(\Phithn)
&=\int_J\Big\{(e_\cT^{\sPi}(t),\pt\Phithn(t))_{\Omega}-\sum_{T\in\cT}\big\{(\nabla \ti{\Phi}_{\theta_n}(t) , \nabla e^{\sPi}_\cT(t))_T-(\nabla \ti{\Phi}_{\theta_n}(t){\cdot}\bn_T,e^{\sPi}_\cT(t))_{\dt}\big\}dt\Big\}.
\end{align*}

\noindent We now pass to the limit $n\to\infty$ in the above identity.
From \eqref{phi-reg}, \eqref{tphi-reg}, and the linearity of the wave equation, we have $\pt\Phithn\to\pt\Phith$ in $L^\infty(J;L^2(\Omega))$ and  $\ti{\Phi}_{\theta_n}\to \tphith$ in $L^\infty(J;H^{1+s}(\Omega))$.
Since we have that

   \[
\norm{\pt\Phithn}_{L^\infty(J;L^2(\Omega))}
+\ell_\Omega^{-1}\norm{\nabla \ti{\Phi}_{\theta_n}}_{L^{\infty}(J;\boldsymbol{L}^2(\Omega))}
+\ell_\Omega^{-2}\norm{\ti{\Phi}_{\theta_n}}_{L^{\infty}(J;H^{1+s}(\Omega))}\lesssim \norm{\theta_n}_\Omega \lesssim 2\norm{\theta}_\Omega, \]
for all $n\mathbb{N}$,
we can invoke Lebesgue's dominated convergence theorem and infer that
\begin{align}
\fE(\Phith)&:=(\ut{e}_\cT^{\sPi}(T_f),\theta)_{\Omega}\nonumber \\
&=\int_J\Big\{(e_\cT^{\sPi}(t),\pt\Phith(t))_{\Omega}-\sum_{T\in\cT}\big\{(\nabla \ti{\Phi}_{\theta}(t) , \nabla e^{\sPi}_\cT(t))_T-(\nabla \ti{\Phi}_{\theta}(t){\cdot}\bn_T,e^{\sPi}_\cT(t))_{\dt}\big\}dt\Big\}\nonumber\\&=:T_1+T_2.\label{6.19}
\end{align}
On the one hand, invoking \eqref{time-int} with $f:=e_\cT^{\sPi}\in H^1(J;L^2(\Omega))$ and $g:=\pt\Phith\in L^2(J;L^2(\Omega))$ and since  $\ti{g}(t)= \Phith(T_f)- \Phith(t)=-\Phith(t)$ owing to \eqref{Final-Phi},  we infer that 
\begin{align}
T_1 = -(e_\cT^{\sPi}(0),\Phith(0))_\Omega-\int_J(\pt e^{\sPi}_\cT(t),\Phith(t))_{\Omega}\,dt.\label{6.20}
\end{align}
On the other hand, the $L^2$-orthogonality of $\bPi^k_{\cT}$ leads to
\begin{align}
T_2 = -\int_J \sum_{T\in\cT}\Big\{(\bPi^k_{\cT}(\nabla\tphith(t)),\nabla e^{\sPi}_T(t))_{T}-(\nabla\tphith(t){\cdot}\bn_T,e^{\sPi}_T(t))_{\partial T}\Big\}\,dt.\label{eq:4.29}
\end{align}
The definition of $\bG_\cT$ in the first error equation \eqref{hho-error:1a} implies that, for all $\bxi_\cT\in\bS^k_\cT$,
\begin{align*}
-(\nabla e_\cT^{\sPi}(t),\bxi_\cT)_{\Omega} &
=-(\pt\bet_\cT^{\sPi}(t),\bxi_\cT)_{\Omega}+ \sum_{T\in\cT}(e_{\dt}^{\sPi}(t)-e_T^{\sPi}(t),\bxi_T{\cdot}\bn_T)_{\dt}.
\end{align*}
This with $\bxi_\cT := \bPi^k_\cT(\nabla\tphith(t))$  and the observation that $\sum_{T\in\cT} (e^{\sPi}_{\dt}(t),\nabla\tphith(t){\cdot}\bn_T))_{\dt}=0$ in \eqref{eq:4.29} (owing to the regularity in space of $\tphith$)  show that
\begin{align}
T_2 &=-\int_J\Big\{(\pt\bet_\cT^{\sPi}(t),\bPi^{k}_\cT(\nabla\tphith(t)))_{\Omega}-\sum_{T\in\cT}(e_T^{\sPi}(t)-e_{\dt}^{\sPi}(t),(\nabla\tphith(t)-\bPi^{k}_T(\nabla\tphith(t))){\cdot}\bn_T)_{\dt}\Big\}dt.\label{eq:4.30}
\end{align}
Differentiating in time the second error equation \eqref{hho-error:1b}, we obtain, for all $\h{w}_\cM\in\h{V}_{\cM 0}^k$,
\begin{align*}
- (\pt\bet_\cT^{\sPi}(t), \bG_\cT(\h{w}_\cM))_{\Omega}=(\partial_{tt} e_\cT^{\sPi}(t),w_\cT)_{\Omega}  +s_{\cM}(\pt\h{e}_\cM^{\sPi}(t),\h{w}_\cM)- \psi_{\cM}^{\sPi}(\pt \uv(t);\h{w}_\cM).
\end{align*}
This with $\h{w}_\cM :=(\Pi_\cT^{k'}(\tphith(t)),\Pi^k_{\cF}(\tphith(t)|_{\cF}))$ and the observation that $\bG_\cT(\Pi_\cT^{k'}(\tphith(t)),\Pi^k_{\cF}(\tphith(t)|_{\cF})) = \bPi^k_\cT(\nabla\tphith(t))$ in \eqref{eq:4.30} lead to
\begin{align}
T_2 &= \int_J\Big\{  (\partial_{tt} e_\cT^{\sPi}(t), \Pi_\cT^{k'}(\tphith(t)))_{\Omega} +s_{\cM}(\pt\h{e}_{\cM}^{\sPi}(t),(\Pi_\cT^{k'}(\tphith(t)),\Pi^k_{\cF}(\tphith(t)|_{\cF})))\nonumber\\&\quad - \psi_{\cM}^{\sPi}(\pt \uv(t);(\Pi_\cT^{k'}(\tphith(t)),\Pi^k_{\cF}(\tphith(t)|_{\cF})))\nonumber\\&
\quad+\sum_{T\in\cT}(e_T^{\sPi}(t)-e_{\dt}^{\sPi}(t),(\nabla\tphith(t)-\bPi^{k}_T(\nabla\tphith(t))){\cdot}\bn_T)_{\dt}\Big\}\,dt\label{eq:4.31}.
\end{align}
Since $\tphith(T_f) =0$,  $\pt\tphith(t) = -\Phith(t)$, and $\pt e_\cT^{\sPi}(0)=0$ owing to \eqref{IC:time-diff},  we have
\begin{align*}
\int_J(\partial_{tt} e_\cT^{\sPi}(t), \Pi_\cT^{k'}(\tphith(t)))_{\Omega}\,dt &=\int_J\Big\{\frac{d}{dt}(\pt e_\cT^{\sPi}(t),\Pi_\cT^{k'}(\tphith(t)))_{\Omega}-(\pt e_\cT^{\sPi}(t),\Pi_\cT^{k'}(\pt\tphith(t)))_{\Omega}\Big\}\,dt\\&=  \int_J(\pt e_\cT^{\sPi}(t),\Pi^{k'}_\cT(\Phith(t)))_{\Omega}\,dt.
  \end{align*}
This with the $L^2$-orthogonality of $\Pi^{k'}_{\cT}$ in \eqref{eq:4.31}  shows that
\begin{align}
	T_2 &= \int_J\Big\{(\pt e_\cT^{\sPi}(t),\Phith(t))_{\Omega}\,dt+s_{\cM}(\pt\h{e}_{\cM}^{\sPi}(t),(\Pi_\cT^{k'}(\tphith(t)),\Pi^k_{\cF}(\tphith(t)|_{\cF})))\nonumber\\&\quad  - \psi_{\cM}^{\sPi}(\pt \uv(t);(\Pi_\cT^{k'}(\tphith(t)),\Pi^k_{\cF}(\tphith(t)|_{\cF})))\nonumber\\&
	\quad+\sum_{T\in\cT}(e_T^{\sPi}(t)-e_{\dt}^{\sPi}(t),(\nabla\tphith(t)-\bPi^{k}_T(\nabla\tphith(t))){\cdot}\bn_T)_{\dt}\Big\}\,dt\label{eq:4.32}.
\end{align}
The combination of \eqref{6.20} and \eqref{eq:4.32} in \eqref{6.19} proves that
\begin{align}
\fE(\Phith) &= -( e_\cT^{\sPi}(0),\Phith(0))_{\Omega} + \int_J\Big\{s_{\cM}(\pt\h{e}_{\cM}^{\sPi}(t),(\Pi_\cT^{k'}(\tphith(t)),\Pi^k_{\cF}(\tphith(t)|_{\cF})))\nonumber\\&\quad- \psi_{\cM}^{\sPi}(\pt \uv(t);(\Pi_\cT^{k'}(\tphith(t)),\Pi^k_{\cF}(\tphith(t)|_{\cF}))) \nonumber\\&\quad +\sum_{T\in\cT}(e_T^{\sPi}(t)-e_{\dt}^{\sPi}(t),(\nabla\tphith(t)-\bPi^{k}_T(\nabla\tphith(t))){\cdot}\bn_T)_{\dt}\Big\}\,dt
\nonumber\\&=:\fE_0(\Phith(0))+\int_J\sum_{i=1}^3\fE_i(\tphith(t))\,dt.\label{5.26}
\end{align}
{\bf Step 3 (Bounds on $\{\fE_j\}_{j=0}^3$)}. For $\fE_0$, the observation $e_\cT^{\sPi}(0) = e_\cT^{\sH}(0) + \Pi^v_\cT(\ul{v}_0)-\Pi_\cT^{k'}(v_0)  = \Pi^v_\cT(\ul{v}_0)-\Pi_\cT^{k'}(v_0)$ owing to the initial condition $e_\cT^{\sH}(0) = 0 $ and the Cauchy--Schwarz inequality show that 
\begin{align}
|\fE_0(\Phith(0))| 
&\leq \norm{\Pi_\cT^{k'}(v_0)-\Pi^v_\cT(\ul{v}_0)}_{\Omega} \norm{\Phith(0)}_{\Omega}.\label{E2}
\end{align}	 
For $\fE_1$, the Cauchy--Schwarz inequality followed by \eqref{S1} and the classical approximation properties for the elliptic projection ${\cal E}_{\cT}^{k+1}$ give
\begin{align}
|\fE_1(\tphith(t))| &\leq |\pt\h{e}_{\cM}^{\sPi}(t)|_{\sS} |(\Pi_\cT^{k'}(\tphith(t)),\Pi^k_{\cF}(\tphith(t)|_{\cF}))|_{\sS}\lesssim \ell_\Omega^{\frac12}
h^s |\pt\h{e}_{\cM}^{\sPi}(t)|_{\sS} |\tphith(t)|_{H^{1+s}(\Omega)}.\label{E4}%\\
%|\fE_3(\tphith(t))| &\leq \norm{\psi_{\cM}^{\sPi}(\pt \uv(t),\cdot)}_{(\sPi)'}\norm{(\Pi_\cT^{k'}(\tphith(t)),\Pi^k_{\cF}(\tphith(t)|_{\cF}))}_{\sPi}\nonumber\\&\lesssim h^s \norm{\psi_{\cM}^{\sPi}(\pt \uv(t),\cdot)}_{(\sPi)'} |\tphith(t)|_{H^{1+s}(\Omega)}.\label{E5}
\end{align}
For $\fE_2$, substituting $\h{w}_{\cM}:= (\Pi_\cT^{k'}(\tphith(t)),\Pi^k_{\cF}(\tphith(t)|_{\cF}))$ in \eqref{cons-HHO} and since $\bG_\cT(\h{w}_\cM) = \bPi^k_\cT(\nabla\tphith(t))$, we infer that
\begin{align}
\fE_2(\tphith(t)) &= (\nabla{\cdot}\pt\bsg(t),\Pi_\cT^{k'}(\tphith(t)))_\Omega + (\bPi^k_{\cT}(\pt\bsg(t)),\bPi^k_\cT(\nabla\tphith(t)))_\Omega\nonumber\\&\quad+ s_{\cM}((\Pi^{k'}_\cT(\pt v(t)),\Pi^k_\cF(\pt v(t)|_\cF)),(\Pi_\cT^{k'}(\tphith(t)),\Pi^k_{\cF}(\tphith(t)|_{\cF}))).\label{7.27}
\end{align}
The $L^2$-orthogonality of the projection operators and the fact that $(\nabla{\cdot}\pt\bsg(t),\tphith(t))_\Omega = -(\pt\bsg(t),\nabla\tphith(t))_\Omega$ owing to the boundary condition $\tphith(t)=0$ on $\Gamma$ show that
\begin{align*}
&(\nabla{\cdot}\pt\bsg(t),\Pi_\cT^{k'}(\tphith(t)))_\Omega + (\bPi^k_{\cT}(\pt\bsg(t)),\bPi^k_\cT(\nabla\tphith(t)))_\Omega\nonumber\\& =(\nabla{\cdot}\pt\bsg(t),\Pi_\cT^{k'}(\tphith(t))-\tphith(t))_\Omega +(\nabla{\cdot}\pt\bsg(t),\tphith(t))_\Omega+ (\pt\bsg(t),\bPi^k_\cT(\nabla\tphith(t)))_\Omega\\
& = (\nabla{\cdot}\pt\bsg(t)-\Pi_\cT^{k'}(\nabla{\cdot}\pt\bsg(t)),\Pi_\cT^{k'}(\tphith(t))-\tphith(t))_\Omega - (\pt\bsg(t)-\bPi^k_{\cT}(\pt\bsg(t)),\nabla\tphith(t)-\bPi^k_\cT(\nabla\tphith(t)))_\Omega.
\end{align*}
This in the identity \eqref{7.27} proves that 
\begin{align}
\fE_2(\tphith(t)) &=  (\nabla{\cdot}\pt\bsg(t)-\Pi_\cT^{k'}(\nabla{\cdot}\pt\bsg(t)),\Pi_\cT^{k'}(\tphith(t))-\tphith(t))_\Omega \nonumber\\&\quad- (\pt\bsg(t)-\bPi^k_{\cT}(\pt\bsg(t)),\nabla\tphith(t)-\bPi^k_\cT(\nabla\tphith(t)))_\Omega\nonumber\\&\quad + s_{\cM}((\Pi^{k'}_\cT(\pt v(t)),\Pi^k_\cF(\pt v(t)|_\cF)),(\Pi_\cT^{k'}(\tphith(t)),\Pi^k_{\cF}(\tphith(t)|_{\cF}))).
\end{align}
Invoking the Cauchy--Schwarz inequality, the approximation estimate \eqref{L2:proj-est} for the projection operator $\bPi^k_{\cT}$, and bounding the stabilization term as above leads to
\begin{align*}
|\fE_2(\tphith(t))| \lesssim{}&  h^s \Big\{ \norm{\pt\bsg(t)-\bPi^k_{\cT}(\pt\bsg(t))}_\Omega+\ell_\Omega\norm{\nabla (\pt v - {\cal E}_\cT^{k+1} (\pt v))}_\Omega \Big\} |\tphith(t)|_{H^{1+s}(\Omega)} \\
& + \norm{\nabla{\cdot}\pt\bsg(t)-\Pi_\cT^{k'}(\nabla{\cdot}\pt\bsg(t))}_\Omega \norm{\Pi_\cT^{k'}(\tphith(t))-\tphith(t)}_\Omega.
\end{align*}
The bound on the last term on the right-hand side depends on whether $k'=0$ or $k'\ge1$. 
Recalling that $\delta:=s$ if $k'=0$ and $\delta:=0$ otherwise, we obtain
\[
\norm{\Pi_\cT^{k'}(\tphith(t))-\tphith(t)}_\Omega \lesssim h^{1+s-\delta}%\ell_\Omega^\delta 
|\tphith(t)|_{H^{1+s-\delta}(\Omega)}.
\]
Altogether, this gives
\begin{align*}
|\fE_2(\tphith(t))| \lesssim{}&  h^s \Big\{ 
\big\{\norm{\pt\bsg(t)-\bPi^k_{\cT}(\pt\bsg(t))}_\Omega+\ell_\Omega\norm{\nabla (\pt v - {\cal E}_\cT^{k+1} (\pt v))}_\Omega\big\}|\tphith(t)|_{H^{1+s}(\Omega)} \\
& + h^{1-\delta}\ell_\Omega^{\delta} \norm{\nabla{\cdot}\pt\bsg(t)-\Pi_\cT^{k'}(\nabla{\cdot}\pt\bsg(t))}_\Omega |\tphith(t)|_{H^{1+s-\delta}(\Omega)}\Big\}.
\end{align*}
For $\fE_3$, the Cauchy--Schwarz inequality, the fractional multiplicative trace inequality (see \cite[Remark~12.19]{ErnGuermondI}), and the approximation property~\eqref{L2:proj-est} for $\bPi^k_{\cT}$ lead to 
\begin{align}
|\fE_3(\tphith(t))|&\leq \norm{\h{e}^{\HHO}_\cM(t)}_{\HHO}\Big\{\sum_{T\in\cT}h_T \norm{\nabla\tphith(t)-\bPi^{k}_T(\nabla\tphith(t))){\cdot}\bn_T}^2_{\dt}\Big\}^{\frac{1}{2}}\nonumber\\
&\lesssim h^s \norm{\h{e}^{\HHO}_\cM(t)}_{\HHO}|\tphith(t)|_{H^{1+s}(\Omega)}.\label{6.30}
\end{align}
Notice from \eqref{hho-error:1a} that $\bG_{\cT}(\h{e}^{\HHO}_\cM(t)) = \pt \bet_\cT^{\HHO}(t)$ for all $t\in\ol{J}$. This in 
Lemma~\ref{prop:stability} implies that
\begin{align}
\norm{\h{e}^{\HHO}_\cM(t)}_{\HHO} \lesssim \norm{\pt \bet_\cT^{\HHO}(t)}_\Omega+\ell_\Omega^{-\frac12}|\h{e}^{\HHO}_\cM(t)|_{\sS}.
\end{align}	
This in \eqref{6.30} proves that
\begin{align}
|\fE_3(\tphith(t))|\lesssim  h^s \Big\{ \norm{\pt \bet_\cT^{\HHO}(t)}_\Omega+\ell_\Omega^{-\frac12}|\h{e}^{\HHO}_\cM(t)|_{\sS} \Big\} |\tphith(t)|_{H^{1+s}(\Omega)}.\label{E6}
\end{align}
{\bf Step 4 (Conclusion)}. Collecting the previous estimates, invoking the Cauchy--Schwarz inequality in time, and using that $|\tphith|_{L^2(J;H^{1+s}(\Omega))} 
\le T_f^{\frac{1}{2}} |\tphith|_{L^\infty(J;H^{1+s}(\Omega))}$ proves that
\begin{align*}
|\fE(\Phith)| &\lesssim \norm{\Pi_\cT^{k'}(v_0)-\Pi^v_\cT(\ul{v}_0)}_{\Omega} 
\|\Phith\|_{L^\infty(J;L^2(\Omega))} \\
&+ h^s T_f^{\frac{1}{2}} \Big\{\big\{\ell_\Omega^{-\frac12} \abs{\h{e}_{\cM}^{\sPi}}_{L^2(J;\sS)} + \ell_\Omega^{\frac12} \abs{\pt\h{e}_{\cM}^{\sPi}}_{L^2(J;\sS)} + \norm{\pt\bet_\cT^{\sPi}}_{L^2(J;\bL^2(\Omega))} \\
&+ \ell_\Omega \norm{\nabla (\pt v - {\cal E}_\cT^{k+1} (\pt v))}_{L^2(J;\bL^2(\Omega))}
+ \norm{\pt\bsg-\bPi^k_{\cT}(\pt\bsg)}_{L^2(J;\bL^2(\Omega))} \big\}|\tphith|_{L^\infty(J;H^{1+s-\delta}(\Omega))}\\
&+h^{1-\delta}\ell_\Omega^\delta \norm{\nabla{\cdot}\pt\bsg-\Pi_\cT^{k'}(\nabla{\cdot}\pt\bsg)}_{L^2(J;L^2(\Omega))} |\tphith|_{L^\infty(J;H^{1+s}(\Omega))}\Big\} .
\end{align*}
Since $\|\Phith\|_{L^\infty(J;L^2(\Omega))}\lesssim \ell_\Omega \|\nabla \Phith\|_{L^\infty(J;\bL^2(\Omega))}$ owing to the Poincar\'e inequality in $\Omega$,
applying the regularity estimates \eqref{phi-reg} and \eqref{tphi-reg} and choosing $\theta := \ut{e}_\cT^{\sPi}(T_f)$ concludes the proof.

%%%%%%%%%%%%%%%%%%%%%%%%%%%%%%%%%%%%%%%%
\bibliographystyle{siam}
\bibliography{references}
\end{document}